\theoremstyle{plain} 
\newtheorem{theorem}[equation]{Theorem}
\newtheorem{corollary}[equation]{Corollary}
\newtheorem{lemma}[equation]{Lemma}
\newtheorem{proposition}[equation]{Proposition}
\theoremstyle{definition}
\newtheorem{definition}[equation]{Definition}
\theoremstyle{remark}
\newtheorem{remark}[equation]{Remark}
\newtheorem{example}[equation]{Example}
\newcommand{\BV}{B}
\newcommand{\sectiontriviale}{s}
\def\build#1_#2^#3{\mathrel{\mathop{\kern0pt#1}\limits_{#2}^{#3}}}
\begin{document}
\title{\bf A Batalin-Vilkovisky algebra morphism from double loop spaces to free loops}
\author{Luc Menichi}
\address{UMR 6093 associ\'ee au CNRS\\
Universit\'e d'Angers, Facult\'e des Sciences\\
2 Boulevard Lavoisier\\49045 Angers, FRANCE}
\email{firstname.lastname at univ-angers.fr}
\begin{abstract}
Let $M$ be a compact oriented $d$-dimensional smooth manifold
and $X$ a topological space.
Chas and Sullivan~\cite{Chas-Sullivan:stringtop} have defined a structure of Batalin-Vilkovisky
algebra on $\mathbb{H}_*(LM):=H_{*+d}(LM)$.
Getzler~\cite{Getzler:BVAlg} has defined a structure of Batalin-Vilkovisky
algebra on the homology of the pointed double loop space of $X$,
$H_*(\Omega^2 X)$.
Let $G$ be a topological monoid with a homotopy inverse.
Suppose that $G$ acts on $M$.
We define a structure of Batalin-Vilkovisky algebra
on $H_*(\Omega^2BG)\otimes\mathbb{H}_*(M)$ extending the Batalin-Vilkovisky algebra
of Getzler on $H_*(\Omega^2BG)$.
We prove that the morphism of graded algebras
$$H_*(\Omega^2BG)\otimes\mathbb{H}_*(M)\rightarrow\mathbb{H}_*(LM)$$
defined by Felix and Thomas~\cite{Felix-Thomas:monsefls}, is in fact a morphism of
Batalin-Vilkovisky algebras. In particular, if $G=M$ is a connected compact Lie group,
we compute the Batalin-Vilkovisky algebra
$\mathbb{H}_*(LG;\mathbb{Q})$.
\end{abstract}
\maketitle

\subjclass{55P35, 55P62}
\section{Introduction}
We work over an arbitrary principal ideal domain ${\Bbbk}$.

Algebraic topology gives us two sources of Batalin-Vilkovisky algebras (Definition~\ref{def BV algebre}): Chas Sullivan string topology~\cite{Chas-Sullivan:stringtop}
and iterated loop spaces.
More precisely, let $X$ be be a pointed topological space, extending the work of Cohen~\cite{Cohen-Lada-May:homiterloopspaces}, Getzler\cite{Getzler:BVAlg} has shown that the homology $H_*(\Omega^2 X)$ of the double
pointed loopspace on $X$ is a Batalin-Vilkovisky algebra.
Let $M$ be a closed oriented manifold of dimension $d$. Denote by $LM:=\text{map}(S^1,M)$ the free loop space on $M$.
By Chas and Sullivan~\cite{Chas-Sullivan:stringtop}, the (degree-shifted) homology
$\mathbb{H}_*(LM):=H_{*+d}(LM)$ of $LM$ has also a Batalin-Vilkovisky algebra.
In this paper, we related this two a priori very different Batalin-Vilkovisky
algebras.

Initially, our work started with the following Theorem of Felix and Thomas.
\vspace{.3cm}
\begin{theorem}~\cite[Theorem 1]{Felix-Thomas:monsefls}\label{Felix et Thomas}
Let $G$ be a topological monoid acting on a smooth compact oriented
manifold $M$.
Consider the map $\Theta_{G,M}:\Omega G\times M\rightarrow LM$
which sends the couple $(w,m)$ to the free loop $t\mapsto w(t).m$.
The map induced in homology
$$\mathbb{H}_*(\Theta_{G,M}):H_*(\Omega G)\otimes\mathbb{H}_*(M)
\rightarrow \mathbb{H}_*(LM)$$
is a morphism of commutative graded algebras.   
\end{theorem}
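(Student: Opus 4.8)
The plan is to show that $\Theta_{G,M}$, at the level of the umkehr (Gysin) maps defining both products, intertwines loop concatenation in $LM$ with the product on $H_*(\Omega G)\otimes\mathbb{H}_*(M)$: the tensor product of the Pontryagin algebra of $\Omega G$ with the intersection algebra $\mathbb{H}_*(M)=H_{*+d}(M)$. Recall that the Pontryagin product on $H_*(\Omega G)$ is induced by concatenation of based loops and is graded commutative because $\Omega G$ carries, besides concatenation, the pointwise multiplication inherited from the monoid $G$ (Eckmann--Hilton); that the intersection product on $\mathbb{H}_*(M)$ is $\Delta^{!}\circ(-\times-)$ for the Gysin map $\Delta^{!}\colon H_*(M\times M)\to H_{*-d}(M)$ of the diagonal $\Delta\colon M\hookrightarrow M\times M$ (codimension $d$, normal bundle $TM$); and that $H_*(\Omega G)\otimes\mathbb{H}_*(M)$ is therefore a commutative graded algebra with unit $[\mathrm{const}_e]\otimes[M]$. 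The Chas--Sullivan product on $\mathbb{H}_*(LM)$ is $\mathrm{conc}_*\circ(\mathrm{ev}_0\times\mathrm{ev}_0)^{!}\circ(-\times-)$, where $\mathrm{ev}_0\colon LM\to M$ evaluates at the basepoint, $(\mathrm{ev}_0\times\mathrm{ev}_0)^{!}\colon H_*(LM\times LM)\to H_{*-d}(LM\times_M LM)$ is the Gysin map of the pullback of $\Delta$ along $\mathrm{ev}_0\times\mathrm{ev}_0$, and $\mathrm{conc}\colon LM\times_M LM\to LM$ concatenates composable loops; I fix sign conventions so that $\Delta^{!}$ is the restriction of the loop product to constant loops. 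I work with Moore loops and Moore paths so that concatenation is strictly defined and all maps below are honest continuous maps, and I abbreviate $\Theta:=\Theta_{G,M}$.

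The observation driving the proof is that the basepoint of the loop $\Theta(w,m)\colon t\mapsto w(t)\cdot m$ is $w(0)\cdot m=e\cdot m=m$, since $w$ is a loop at the identity $e$ of $G$; hence $\mathrm{ev}_0\circ\Theta=\mathrm{pr}_M\colon\Omega G\times M\to M$. Consequently the pullback of $\Delta$ along $(\mathrm{ev}_0\times\mathrm{ev}_0)\circ(\Theta\times\Theta)=\mathrm{pr}_M\times\mathrm{pr}_M\colon\Omega G\times M\times\Omega G\times M\to M\times M$ is exactly the locus $\{(w_1,m_1,w_2,m_2):m_1=m_2\}\cong\Omega G\times\Omega G\times M$, and $\Theta\times\Theta$ restricts to a continuous map $\overline{\Theta}\colon\Omega G\times\Omega G\times M\to LM\times_M LM$, $(w_1,w_2,m)\mapsto\bigl(\Theta(w_1,m),\Theta(w_2,m)\bigr)$, well defined precisely because both loops have basepoint $m$.

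I would then assemble the diagram
$$
\begin{array}{ccc}
H_*(\Omega G\times M\times\Omega G\times M) & \xrightarrow{\ (\Theta\times\Theta)_*\ } & H_*(LM\times LM)\\[1.8mm]
{\scriptstyle(\mathrm{pr}_M\times\mathrm{pr}_M)^{!}}\Big\downarrow & & \Big\downarrow{\scriptstyle(\mathrm{ev}_0\times\mathrm{ev}_0)^{!}}\\[1.8mm]
H_{*-d}(\Omega G\times\Omega G\times M) & \xrightarrow{\ \overline{\Theta}_*\ } & H_{*-d}(LM\times_M LM)\\[1.8mm]
{\scriptstyle(\mathrm{conc}\times\mathrm{id})_*}\Big\downarrow & & \Big\downarrow{\scriptstyle\mathrm{conc}_*}\\[1.8mm]
H_{*-d}(\Omega G\times M) & \xrightarrow{\ \Theta_*\ } & H_{*-d}(LM).
\end{array}
$$
The lower square commutes already on the space level, since concatenating $t\mapsto w_1(t)\cdot m$ with $t\mapsto w_2(t)\cdot m$ gives $t\mapsto(w_1\cdot w_2)(t)\cdot m$; that is, $\mathrm{conc}\circ\overline{\Theta}=\Theta\circ(\mathrm{conc}\times\mathrm{id}_M)$, the left-hand $\mathrm{conc}$ being concatenation in $\Omega G$. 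The upper square is the naturality of the Gysin map: both columns are umkehr maps for pullbacks of the \emph{same} embedding $\Delta$, along $\mathrm{ev}_0\times\mathrm{ev}_0$ and along $(\mathrm{ev}_0\times\mathrm{ev}_0)\circ(\Theta\times\Theta)$, so a tubular neighborhood of $\Omega G\times\Omega G\times M$ inside $\Omega G\times M\times\Omega G\times M$ is the preimage under $\Theta\times\Theta$ of one of $LM\times_M LM$ inside $LM\times LM$, and commutativity follows from naturality of the Pontryagin--Thom collapse and of the Thom isomorphism. Reading the diagram from the top-left corner to the bottom-right corner yields, for $x,y\in H_*(\Omega G\times M)$,
$$\Theta_*(x)\bullet\Theta_*(y)=\Theta_*\Bigl((\mathrm{conc}\times\mathrm{id})_*\circ(\mathrm{pr}_M\times\mathrm{pr}_M)^{!}(x\times y)\Bigr).$$
It then remains to check that $(\mathrm{conc}\times\mathrm{id})_*\circ(\mathrm{pr}_M\times\mathrm{pr}_M)^{!}$ is, after permuting $\Omega G\times M\times\Omega G\times M$ into $\Omega G\times\Omega G\times M\times M$ (the origin of the Koszul sign in the tensor product of algebras) and using that the Gysin map of $\mathrm{pr}_M\times\mathrm{pr}_M$ is then $\mathrm{id}\times\mathrm{id}\times\Delta^{!}$, exactly the tensor product of the Pontryagin and intersection products; hence $\mathbb{H}_*(\Theta_{G,M})$, i.e.\ $\Theta_*$ composed with the homology cross product $H_*(\Omega G)\otimes\mathbb{H}_*(M)\to\mathbb{H}_*(\Omega G\times M)$, is multiplicative. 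It preserves units because $\Theta\circ(\mathrm{const}_e\times\mathrm{id}_M)\colon M\to LM$ is the inclusion of constant loops, which carries $[M]$ to the unit of $\mathbb{H}_*(LM)$.

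The step I expect to be the main obstacle is the rigorous justification of the upper square, that is, the naturality of the umkehr map with respect to $\Theta\times\Theta$: one must control these infinite-dimensional pullbacks (that $\mathrm{ev}_0$ is a fibration, so the defining square is homotopy transverse, and that tubular neighborhoods pull back along $\Theta\times\Theta$ as asserted). Depending on the chosen formalism this is provided by the Cohen--Jones Thom-spectrum model of the loop product, by a Chas--Sullivan-type tubular-neighborhood and Thom-class argument, or by geometric (bordism-type) homology; once this naturality---together with the bookkeeping of Koszul signs in the final identification---is established, the remaining steps are formal.
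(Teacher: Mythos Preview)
Your argument is correct. The key observation you make---that $\mathrm{ev}_0\circ\Theta=\mathrm{pr}_M$, so that $\Theta$ covers the identity of $M$ and intertwines fiberwise concatenation---is exactly the content of the paper's proof as well, but the paper packages it differently: it simply remarks that $\Theta_{G,M}$ is a morphism of \emph{fiberwise monoids} from the trivial fibration $\Omega G\times M\twoheadrightarrow M$ to $\mathrm{ev}_0\colon LM\twoheadrightarrow M$, and then invokes the general theorem of Gruher--Salvatore that morphisms of fiberwise monoids over a closed oriented manifold induce algebra morphisms on the associated string-topology algebras. Your diagram is precisely what one writes down to prove the relevant instance of that theorem, so your route is more self-contained but longer, while the paper's is a one-line citation that hides the Gysin-naturality square you correctly flag as the only nontrivial step. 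Neither approach gains generality over the other here; the paper's has the advantage that the same black box is reused later (for the map $\Phi_{G,M}$ in the proof of the main theorem), so the citation amortizes.
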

In~\cite{Felix-Thomas:monsefls}, F\'elix and Thomas stated this theorem with
$G=aut_1(M)$, the monoid of self equivalences homotopic to the
identity.
But their theorem extends to any topological monoid $G$
since any action on $M$ factorizes through $\mbox{Hom}(M,M)$
and $aut_1(M)$ is the path component of the identity in
$\mbox{Hom}(M,M)$.
Note also that the assumptions of~\cite{Felix-Thomas:monsefls}
``${\Bbbk}$ is a field '' and ``$M$ is simply connected''
are not necessary.

If $G=M$ is a Lie group, $\Theta_{G,G}:\Omega G\times G\rightarrow LG$
is a homeomorphism. Therefore we recover the following well-known isomorphism
(See for example~\cite[Proof of Theorem 10]{menichi:stringtopspheres}).
\begin{corollary}\label{loop algebra des groupes de Lie}
Let $G$ be a path-connected compact Lie group.
The algebra $\mathbb{H}_*(LG)$ is isomorphic to the tensor
product of algebras $H_*(\Omega G)\otimes\mathbb{H}_*(G)$.
\end{corollary}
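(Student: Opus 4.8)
The plan is to specialize Theorem~\ref{Felix et Thomas} to the case $M=G$, with $G$ acting on itself by left translations, and to check that in this case $\Theta_{G,G}$ is a homeomorphism. First note that a compact Lie group $G$ is a closed manifold, and it is orientable because its tangent bundle is trivial; fix such an orientation, of dimension $d=\dim G$. Thus the Chas--Sullivan Batalin--Vilkovisky algebras $\mathbb{H}_*(LG)$ and $\mathbb{H}_*(G)$, and hence the tensor product of algebras $H_*(\Omega G)\otimes\mathbb{H}_*(G)$ appearing in the statement, make sense. By Theorem~\ref{Felix et Thomas} applied to this action, the map
$$\mathbb{H}_*(\Theta_{G,G}):H_*(\Omega G)\otimes\mathbb{H}_*(G)\rightarrow\mathbb{H}_*(LG)$$
is a morphism of commutative graded algebras. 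Since a bijective morphism of graded algebras is automatically an isomorphism of graded algebras, it suffices to prove that $\mathbb{H}_*(\Theta_{G,G})$ is bijective, and for this it is enough to exhibit a homeomorphism inverting $\Theta_{G,G}$ at the level of spaces.

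Write $s_0$ for the basepoint of $S^1$, so that a loop $w\in\Omega G$ satisfies $w(s_0)=e$, and $\Theta_{G,G}(w,m)$ is the free loop $t\mapsto w(t)\cdot m$. Define $\Psi\colon LG\rightarrow\Omega G\times G$ by $\Psi(\gamma)=\bigl(t\mapsto\gamma(t)\cdot\gamma(s_0)^{-1},\ \gamma(s_0)\bigr)$. Evaluation at $s_0$ is continuous $LG\to G$, multiplication and inversion in $G$ are continuous, and the adjoint map $LG\times S^1\to G$, $(\gamma,t)\mapsto\gamma(t)\gamma(s_0)^{-1}$, is manifestly continuous, so by the exponential law for the compact-open topology $\Psi$ is well defined and continuous (with values in $\Omega G$ since $\gamma(s_0)\gamma(s_0)^{-1}=e$). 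A direct check gives $\Psi\circ\Theta_{G,G}=\mathrm{id}$ and $\Theta_{G,G}\circ\Psi=\mathrm{id}$, using only associativity of the group law; hence $\Theta_{G,G}$ is a homeomorphism.

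It follows that $H_*(\Theta_{G,G})$ is an isomorphism of graded ${\Bbbk}$-modules, and since the degree shift by $d$ on the source $\mathbb{H}_*(G)=H_{*+d}(G)$ matches the shift on the target $\mathbb{H}_*(LG)=H_{*+d}(LG)$, the map $\mathbb{H}_*(\Theta_{G,G})$ is also an isomorphism of graded ${\Bbbk}$-modules. Combined with the first paragraph, $\mathbb{H}_*(\Theta_{G,G})$ is an isomorphism of graded algebras, which is exactly the assertion of the corollary. I do not expect a genuine obstacle here: the mathematical content is entirely carried by Theorem~\ref{Felix et Thomas}, and the only points requiring care are the point-set verification that $\Theta_{G,G}$ is a homeomorphism and the bookkeeping of the degree shift on both sides.
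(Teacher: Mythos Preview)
Your approach matches the paper's exactly: specialize Theorem~\ref{Felix et Thomas} to $M=G$ with the left translation action, and observe that $\Theta_{G,G}$ is a homeomorphism.

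There is, however, one small gap. The morphism $\mathbb{H}_*(\Theta_{G,G})$ in Theorem~\ref{Felix et Thomas} is by definition the composite
\[
H_*(\Omega G)\otimes\mathbb{H}_*(G)\longrightarrow H_{*+d}(\Omega G\times G)\xrightarrow{H_{*+d}(\Theta_{G,G})}\mathbb{H}_*(LG),
\]
where the first arrow is the K\"unneth morphism. Showing that $\Theta_{G,G}$ is a homeomorphism only makes the second arrow an isomorphism; to conclude that the whole composite is bijective you must also know that the K\"unneth morphism is an isomorphism. Over an arbitrary principal ideal domain ${\Bbbk}$ this is not automatic: it requires $H_*(\Omega G;{\Bbbk})$ to be ${\Bbbk}$-free. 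That fact is true for a path-connected Lie group by Bott's theorem (see Lemma~\ref{lacets pointe d'un groupe de Lie} in the paper), but you should invoke it. Once this is added, your argument is complete and coincides with the paper's.
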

The following is our main theorem.
\begin{theorem}(Theorem~\ref{Structure BV sur le produit tensoriel})
Assume the hypothesis of Theorem~\ref{Felix et Thomas}.
Suppose moreover that $G$ has a homotopy inverse and that
$H_*(\Omega G)$ is torsion free.
Then the tensor product of algebras $H_*(\Omega G)\otimes \mathbb{H}_*(M)$
is a Batalin-Vilkovisky algebra and the morphism
$$\mathbb{H}_*(\Theta_{G,M}):
H_*(\Omega G)\otimes\mathbb{H}_*(M)\rightarrow \mathbb{H}_*(LM)$$
defined by Felix and Thomas,
is a morphism of Batalin-Vilkovisky algebras.
\end{theorem}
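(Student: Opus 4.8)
The plan is to exhibit the Chas--Sullivan operator and the operator we want on $H_*(\Omega G)\otimes\mathbb{H}_*(M)$ as coming from one and the same $S^1$-action, and then to verify the Batalin--Vilkovisky axioms on the source by an explicit, if sign-heavy, computation. Using the homotopy inverse we may first replace $G$ by a weakly equivalent topological group, so that the formula $(s\cdot w)(t):=w(t+s)w(s)^{-1}$ defines a genuine $S^1$-action on $\Omega G$; under $\Omega G\simeq\Omega^2BG$ this is the action underlying Getzler's structure. Next I would put on $\Omega G\times M$ the $S^1$-action $s\cdot(w,m):=(s\cdot w,\,w(s).m)$ and check two elementary points: that it is a strict action, and that $\Theta_{G,M}$ is $S^1$-equivariant for it and the rotation action $(s\cdot\gamma)(t)=\gamma(t+s)$ on $LM$ --- indeed $\Theta_{G,M}(s\cdot(w,m))(t)=w(t+s)w(s)^{-1}.(w(s).m)=w(t+s).m=(s\cdot\Theta_{G,M}(w,m))(t)$. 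Let $\Delta$ be the degree $+1$ operator on $H_*(\Omega G\times M)=H_*(\Omega G)\otimes\mathbb{H}_*(M)$ induced by this action (the hypothesis that $H_*(\Omega G)$ is torsion free makes the Künneth map here an isomorphism); then $\Delta^2=0$ and $\Delta(1)=0$ hold automatically, and the equivariance of $\Theta_{G,M}$ shows $\mathbb{H}_*(\Theta_{G,M})$ commutes with the two $\Delta$'s.

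The core of the argument is to show that this $\Delta$ makes the tensor product algebra a Batalin--Vilkovisky algebra; since $\Delta^2=0$ and $\Delta(1)=0$, it suffices to show $\Delta$ is a differential operator of order $\le 2$ for the tensor product of the Pontryagin and intersection products. Splitting the fundamental class of $S^1$ through the diagonals of $S^1$ and of $\Omega G$ and using the counit, one obtains $\Delta=\Delta_0\otimes\mathrm{id}+D$, where $\Delta_0$ is Getzler's operator and $D(\alpha\otimes\xi)=\sum\pm\,\alpha'\otimes\bigl(\sigma(\alpha'').\xi\bigr)$, with $\sum\alpha'\otimes\alpha''$ the Hopf coproduct of $H_*(\Omega G)$, $\sigma\colon\widetilde H_*(\Omega G)\to\widetilde H_{*+1}(G)$ the homology suspension, and $.$ the action of $H_*(G)$ on $\mathbb{H}_*(M)$ induced by $G\times M\to M$. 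As $\Delta_0$ is a Batalin--Vilkovisky operator, $\Delta_0\otimes\mathrm{id}$ is automatically of order $\le 2$, so it remains to bound the order of $D$. For this I would invoke two classical facts: the homology suspension $\sigma$ annihilates Pontryagin decomposables and lands in the primitives of $H_*(G)$; and $\mathbb{H}_*(M)$ is a module-algebra over the Hopf algebra $H_*(G)$ (because $G$ acts through orientation preserving homeomorphisms of $M$), so that primitive classes of $H_*(G)$ act by derivations of the intersection product. Using these, a direct computation of the deviation $\Psi_D(p,q):=D(pq)-D(p)q-(-1)^{|p|}pD(q)$ shows that $\Psi_D$ is a biderivation, i.e. $D$ has order $\le 2$; hence so does $\Delta$.

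The classical criterion (a square-zero, unit-killing differential operator of order $\le 2$ on a graded-commutative algebra is a Batalin--Vilkovisky operator) then produces the Gerstenhaber bracket and shows that $\bigl(H_*(\Omega G)\otimes\mathbb{H}_*(M),\cdot,\Delta\bigr)$ is a Batalin--Vilkovisky algebra; it restricts to Getzler's on $H_*(\Omega G)\otimes[M]$ since $D(\alpha\otimes[M])=0$ for degree reasons ($H_{>d}(M)=0$ and $\sigma(1)=0$). Combined with the theorem of F\'elix and Thomas (Theorem~\ref{Felix et Thomas}) that $\mathbb{H}_*(\Theta_{G,M})$ is a morphism of graded algebras, and with the equivariance established above, this shows $\mathbb{H}_*(\Theta_{G,M})$ is a morphism of Batalin--Vilkovisky algebras.

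The main obstacle I anticipate is the identification $\Delta=\Delta_0\otimes\mathrm{id}+D$ together with the order $\le 2$ verification for $D$: this requires tracking all the Koszul signs and checking carefully that the module-algebra and derivation compatibilities really hold over an arbitrary principal ideal domain, where the intersection product comes from umkehr maps rather than from a multiplication on $M$. A secondary delicate point is confirming that the $S^1$-action on the $\Omega G$ factor does reproduce Getzler's operator under $\Omega G\simeq\Omega^2BG$ with matching conventions, together with the reduction to the case where $G$ is a topological group.
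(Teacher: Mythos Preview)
Your approach is correct in outline but takes a genuinely different route from the paper. You compute the operator explicitly---your decomposition $\Delta=\Delta_0\otimes\mathrm{id}+D$ is exactly the paper's Proposition~\ref{operateur BV sur le produit lacets espace}, which the author proves \emph{after} the theorem---and then verify the order-$\le 2$ condition by hand using that $\sigma_*$ kills decomposables, that its image is primitive, and that $\mathbb{H}_*(M)$ is a module-algebra over $H_*(G)$. The paper instead argues conceptually: it introduces the auxiliary homotopy equivalence $\Phi_{G,M}:\Omega G\times LM\to\Omega G\times LM$, $(w,l)\mapsto(w,w*l)$, which is simultaneously a fiberwise-monoid map (hence an algebra isomorphism by Gruher--Salvatore) and an $S^1$-map from a ``twisted'' action to the ``diagonal'' action. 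Since the diagonal action gives the tensor product of the Getzler and Chas--Sullivan Batalin--Vilkovisky algebras (Example~\ref{produit tensoriel BV}), the twisted action is Batalin--Vilkovisky too; then $H_*(\Omega G)\otimes\mathbb{H}_*(M)$ embeds via $\mathrm{id}\times\sectiontriviale$ as an $S^1$-equivariant subalgebra, inheriting the structure. Finally $\Theta_{G,M}$ factors as a composite of Batalin--Vilkovisky morphisms.

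The trade-off: the paper's argument never touches the suspension, never needs the module-algebra compatibility of the intersection product with the $H_*(G)$-action (which, as you note, is delicate over a general PID with umkehr maps), and has essentially no signs to track. Your argument is more explicit and yields the formula for $\Delta$ along the way, but you must supply those classical facts and the module-algebra verification as genuine ingredients. One further caution: your opening move ``replace $G$ by a weakly equivalent topological group'' is not innocuous, since the action of $G$ on $M$ need not transport along such a replacement; the paper avoids this by working throughout with $S^1$-actions \emph{up to homotopy} on $\Omega G$ (via the retract $r:LG\to\Omega G$ of Section~\ref{action du cercle sur les lacets d'un groupe}), which suffices for the $H_*(S^1)$-module structure.
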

\begin{remark}
We assume that the homology with coefficients in ${\Bbbk}$, $H_*(\Omega G)$, is torsion free
(this hypothesis is of course satisfied if ${\Bbbk}$ is a field), since we want $H_*(\Omega G)$
 to have a diagonal.
\end{remark}
This Batalin-Vilkovisky algebra $H_*(\Omega G)\otimes\mathbb{H}_*(M)$
contains $H_*(\Omega G)$ as sub Batalin-Vilkovisky algebra (Corollary~\ref{comparaison delta}).
Denote by $BG$ the classifying space of $G$.
We show that the sub Batalin-Vilkovisky algebra $H_*(\Omega G)$
is isomorphic to the Batalin-Vilkovisky algebra $H_*(\Omega^2 BG)$
introduced by Getzler (Proposition~\ref{lacets pointe d'un groupe est BV}).
So finally ,we have obtained a morphism of Batalin-Vilkovisky
(Theorem~\ref{morphism de BV algebres des lacets doubles vers lacets libres})
$$H_*(\Omega^2 BG)\rightarrow \mathbb{H}_*(LM)$$
from the homology of the double pointed loop space on $BG$
to the free loop space homology on $M$.

Assuming that $H_*(\Omega aut_1(M))$ is torsion free,
Theorem~\ref{Structure BV sur le produit tensoriel} can also be applied to the monoid of
self equivalences $aut_1(M)$ as we shall now explain.
Since a smooth manifold has a CW-structure, $M$ is a finite
CW-complex. So by a theorem of Milnor~\cite{milnor:spacesCW}, $\mbox{Hom}(M,M)$
has the homotopy type of a CW-complex $K$.
Therefore $aut_1 M$ has the homotopy type of a path component
of $K$.
Recall that a path-connected homotopy associative $H$-space which has the homotopy of a CW-complex
has naturally a homotopy inverse~\cite[Chapter X. Theorem 2.2]{Whitehead:eltsoht},
i.e.~is naturally a {\it $H$-group}~\cite[p. 35]{Spanier:livre}.
Therefore $aut_1 M$ has a $H$-group structure.
\begin{remark}
In~\cite{Felix-Thomas:monsefls}, Felix and Thomas posed the following problem:
Is $$\mathbb{H}_*(\Theta_{aut_1(M),M}):H_*(\Omega
aut_1(M))\otimes\mathbb{H}_*(M)
\rightarrow\mathbb{H}_*(LM)$$
always surjective?
The answer is no.
Take $M:=S^{2n}$, $n\geq 1$. Rationally $aut_1 S^{2n}$ has the same
homotopy type as $S^{4n-1}$. Therefore
$H_*(\Omega aut_1 S^{2n};\mathbb{Q})\otimes H_*(S^{2n};\mathbb{Q})$
is concentrated in even degree. It is easy to see that
$H_*(LS^{2n};\mathbb{Q})$ is not trivial in odd degree.
So $H_*(\Theta_{aut_1(S^{2n}),S^{2n}})$ cannot be surjective.
 \end{remark}

We give now the plan of the paper:

{\bf Section 2:}
We give the definition and some examples of Batalin-Vilkovisky algebras.

{\bf Section 3:}
We compare the $S^1$-action on $\Omega^2 BG$ to a $S^1$-action defined on $\Omega G$.
Therefore the obvious isomorphism of algebras $H_*(\Omega G)\rightarrow
H_*(\Omega^2 BG)$ is in fact an isomorphim of Batalin-Vilkovisky algebras.

{\bf Section 4:} The Batalin-Vilkovisky algebra
$H_*(\Omega G)\otimes\mathbb{H}_*(M)$ is introduced and related to $\mathbb{H}_*(LM)$
(Theorem~\ref{Structure BV sur le produit tensoriel}). Then extending the Theorem
of Hepworth for Lie groups, we show how to compute its operator $\BV$
(Proposition~\ref{operateur BV sur le produit lacets espace}).
This Batalin-Vilkovisky algebra structure on $H_*(\Omega G)\otimes\mathbb{H}_*(M)$
is completely determined by its two sub Batalin-Vilkovisky algebras 
$H_*(\Omega G)$ and $\mathbb{H}_*(M)$
(Corollary~\ref{comparaison delta} and
Corollary~\ref{homologie de variete sous BV algebre})
and by the bracket between them (Corollary~\ref{expression simple du crochet}).

{\bf Section 5:} Extending the computations of the Batalin-Vilkovisky algebras
$\mathbb{H}_*(LS^1)$ and $\mathbb{H}_*(LS^3)$, we compute the Batalin-Vilkovisky
algebras $H_*(\Omega S^1)\otimes\mathbb{H}_*(M)$ and
$H_*(\Omega S^3)\otimes\mathbb{H}_*(M)$.

{\bf Section 6:} Using a second result of Felix and Thomas in~\cite{Felix-Thomas:monsefls}, we show that $\pi_{\geq 1}(\Omega aut_1(M))\otimes \mathbb{Q}$
is always a sub Lie algebra of $\mathbb{H}_*(LM;\mathbb{Q})$.  

{\bf Section 7:} This section is devoted to calculations of the Batalin-Vilkovisky
algebra $H_*(\Omega G)\otimes\mathbb{H}_*(M)$ over the rationals.
In particular, for any path-connected compact Lie group $G$, we compute
$\mathbb{H}_*(LG:\mathbb{Q})$.

{\bf Section 8:} This section is an appendix on split fibrations.

{\em Acknowledgment:}
We would like to thank Yves F\'elix and Jean-Claude Thomas for helpful comments and
for explaining us~\cite{Felix-Thomas:monsefls}.
I would like also to thank Jim Stasheff, for sending me lists of corrections
for several papers (including this one).
I would like especially to thank the referee for his serious job that led me to greatly
inprove this paper.
\section{Batalin-Vilkovisky algebras}
\begin{definition}\label{def BV algebre}
A Batalin-Vilkovisky algebra is a commutative graded algebra
$A$ equipped with an action of the exterior algebra $E(i_1)=H_*(S^{1})$
$$H_*(S^{1})\otimes A\rightarrow A$$
$$i_1\otimes a\mapsto i_1\cdot a\quad\text{
  denoted}\quad\BV(a)$$
such that
\begin{multline*}
\BV(abc)=\BV(ab)c+(-1)^{\vert a\vert} a\BV(bc)+(-1)^{(\vert a\vert -1)\vert b\vert}b\BV(ac)\\
-(\BV a)bc-(-1)^{\vert a\vert}a(\BV b)c
-(-1)^{\vert a\vert +\vert b\vert} ab(\BV c).
\end{multline*}

The bracket $\{\;,\;\}$ of degree $+1$ defined
by 
$$\{a,b\}=(-1)^{\vert a\vert}\left(\BV(ab)-(\BV a)b-(-1)^{\vert
  a\vert}a(\BV b)\right)$$
for any $a$, $b\in A$ satisfies the Poisson relation:
For any $a$, $b$ and $c\in A$,
\begin{equation}\label{Poisson relation}
\{a,bc\}=\{a,b\}c+(-1)^{(\vert a\vert-1)\vert b\vert}b\{a,c\}.
\end{equation}
\end{definition}
Koszul~\cite[p.~3]{Koszul:Crochet} (See also~\cite[Proposition
  1.2]{Getzler:BVAlg} or~\cite{Penkava-Schwarz:algebraic})
has shown that $\{\;,\;\}$ is a Lie
  bracket
and therefore that a Batalin-Vilkovisky algebra is a Gerstenhaber algebra.
\begin{example}(Tensor product of Batalin-Vilkovisky algebras)\label{produit tensoriel BV}
Let $A$ and $A'$ be two Batalin-Vilkovisky algebras.
Denote by $B_A$ and $B_{A'}$ their respective operators.
Consider the tensor product of algebras $A\otimes A'$.
Consider the operator $B_{A\otimes A'}$ on $A\otimes A'$ given by
$$B_{A\otimes A'}(x\otimes y)=B_A(x)\otimes y+(-1)^{\vert x\vert}x\otimes B_{A'}(y)$$
for $x\in A$ and $y\in A'$.
Then $A\otimes A'$ equipped with $B_{A\otimes A'}$ is a Batalin-Vilkovisky algebra.

Let $X$ and $X'$ be two pointed topological spaces.
Let $M$ and $M'$ be two compact oriented smooth manifolds.
It is easy to check that the Kunneth morphisms
$$
H_*(\Omega^2 X)\otimes H_*(\Omega^2 X')\rightarrow H_*(\Omega^2(X\times X'))
$$
$$
\mathbb{H}_*(LM)\otimes\mathbb{H}_*(LM')\rightarrow \mathbb{H}_*(L(M\times M'))
$$
are morphisms of Batalin-Vilkovisky algebras.
\end{example}
\section{The circle action on the pointed loops of a group}\label{action du cercle sur les lacets d'un groupe}
In this section, we define an action up to homotopy of the circle $S^1$
on the pointed loops $\Omega G$ of a $H$-group $G$.
When $G$ is a monoid, we show (Propositions~\ref{actions sur lacets doubles homotopes} and~\ref{lacets pointe d'un groupe est BV}) that the algebra $H_*(\Omega G)$
equipped with the operator induced by this action is a Batalin-Vilkovisky
algebra isomorphic to the Batalin-Vilkovisky algebra $H_*(\Omega^2 BG)$
introduced by Getzler~\cite{Getzler:BVAlg}.
Therefore, in this paper, instead of working with $H_*(\Omega^2 BG)$, we always
consider $H_*(\Omega G)$.

Consider the free loop fibration $\Omega X\buildrel{j}\over\hookrightarrow LX
\buildrel{ev}\over\twoheadrightarrow X$. The evaluation map
$ev:LX\twoheadrightarrow X$ admits a trivial
section $\sectiontriviale:X\hookrightarrow LX$.

Suppose now that $X$ is a $H$-group $G$. The following is easy to check if $X$ is a group.
For the $H$-group case, see appendix~\ref{semi-direct}.
Consider the map $r:LG\rightarrow \Omega G$ unique up to homotopy
such that the composite
$$j\circ r:LG\buildrel{r}\over\twoheadrightarrow
\Omega G\buildrel{j}\over\hookrightarrow LG$$
is homotopic to the map that sends the loop $l\in LG$
to the loop $t\in I\mapsto l(t)l(0)^{-1}$.
The map $\Theta_{G,G}:\Omega G\times G\rightarrow LG$ that maps $(w,g)$ to the free loop
$t\mapsto w(t)g$ is a homotopy equivalence.
Its homotopy inverse is the map $LG\rightarrow \Omega G\times G$,
$l\mapsto (r(l),l(0))$.
In particular, $r:LG\twoheadrightarrow \Omega G$ is a retract of $j$ and the
composite $r\circ\sectiontriviale$ is homotopically trivial.
\begin{theorem}(Compare with~\cite[Proposition 28]{menichi:stringtopspheres})\label{comparaison action du cercle sur lacets libres et lacets doubles}
Let $X$ be a topological space. The retract
$r:L\Omega X\twoheadrightarrow\Omega^2 X$ is a morphism of $S^1$-spaces
up to homotopy (i.e.~in the homotopy category of spaces).
\end{theorem}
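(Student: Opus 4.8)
The plan is to make everything explicit at the level of underlying sets and continuous maps, and only at the very end pass to the homotopy category. Recall that the $S^1$-action on a free loop space $LY$ is rotation of loops: $(\tau\cdot\gamma)(t)=\gamma(t+\tau)$, and that the $S^1$-action on $\Omega^2 X$ is obtained by viewing $\Omega^2 X = \Omega(\Omega X)$ — that is, pointed loops \emph{in the space $\Omega X$} — so it is again rotation, but of the outer loop coordinate. Thus for $Y=\Omega X$ the two actions in the statement are both ``rotate the $LY=L\Omega X$ loop'' on one side and ``rotate the outer $\Omega^2 X$ loop'' on the other; the content is that the retract $r\colon L\Omega X \twoheadrightarrow \Omega^2 X$ intertwines them up to homotopy. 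First I would write down $r$ concretely from the recipe in the excerpt, applied to the $H$-group $G=\Omega X$ (with pointwise concatenation/inverse of loops as the $H$-group structure, or the honest group structure coming from Moore loops): $r(\ell)$ is, up to homotopy, the loop $t\mapsto \ell(t)\ell(0)^{-1}$ in $\Omega X$, where the product and inverse are taken in $\Omega X$. This is literally the formula that sends a free loop in a topological group to the associated based loop, so the whole theorem is the statement ``the based-loop retract $r\colon LG\to\Omega G$ is $S^1$-equivariant up to homotopy when $G$ itself is a loop space $\Omega X$, where $S^1$ acts on the target $\Omega(\Omega X)=\Omega^2 X$ by outer rotation.''

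The key steps, in order: (1) Replace $\Omega X$ by a topological group (or a grouplike topological monoid) so that the formula $r(\ell)(t)=\ell(t)\ell(0)^{-1}$ is strictly defined and the arguments of the earlier sections apply on the nose; this uses the appendix on $H$-groups / split fibrations cited in the excerpt, or the standard trick of replacing $\Omega X$ by the (grouplike) Moore loop space $\Omega_M X$, which is $S^1$-equivariantly homotopy equivalent. (2) Observe that on $L\Omega X$ the rotation action and on $\Omega^2 X$ the rotation action are \emph{both} just ``add $\tau$ to a loop parameter,'' and check that $r$, as written, commutes with this strictly: $r(\tau\cdot\ell)(t) = (\tau\cdot\ell)(t)\,(\tau\cdot\ell)(0)^{-1} = \ell(t+\tau)\ell(\tau)^{-1}$, whereas $\tau\cdot r(\ell)$ is $t\mapsto r(\ell)(t+\tau) = \ell(t+\tau)\ell(0)^{-1}$. (3) Exhibit an explicit homotopy between these two maps $\Omega^2X \to \Omega^2 X$ of $\tau$-families: the difference is right-multiplication by the loop $\ell(\tau)^{-1}$ versus $\ell(0)^{-1}$, and since $s\mapsto \ell(s\tau)$ is a path in $\Omega X$ from $\ell(0)$ to $\ell(\tau)$, conjugating/translating along it gives the required homotopy; this is exactly the kind of ``based vs.\ free'' correction homotopy used to prove that $\Theta_{G,G}$ is an equivalence. (4) Finally, since $S^1\simeq \ast \cup_{*} e^1$ is built from one $0$-cell and one $1$-cell, an action up to homotopy is detected by compatibility with the generating rotation together with one homotopy coherence; package step (3) as that coherence and conclude that $r$ is a map of $S^1$-spaces in the homotopy category. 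A clean alternative for steps (2)–(4): note $L\Omega X \simeq \Omega X \times \Omega^2 X$ via $\Theta_{\Omega X,\Omega X}$ and $r$ is (up to homotopy) the projection to the $\Omega^2 X$-factor; then one only has to identify the $S^1$-action transported along this equivalence and see that it is the outer rotation on the $\Omega^2 X$-factor up to homotopy, which is again the ``$\ell(\tau)$ vs.\ $\ell(0)$'' discrepancy absorbed by the same path.

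The main obstacle will be step (3)/(4): keeping the homotopies parametrized consistently so that one genuinely gets a morphism in the homotopy category of $S^1$-spaces, rather than merely a map commuting with the single generating rotation up to an unspecified homotopy. Concretely, one must make sure the correction homotopy built from $s\mapsto \ell(s\tau)$ depends continuously on $\ell$ and $\tau$ and restricts correctly at $\tau=0$ (where it must be the constant homotopy, since $r$ is strictly equivariant there), so that it assembles into the required higher cell data — or, if one prefers the cleaner bookkeeping, so that it defines a homotopy commutative square with the ``action of the $1$-cell of $S^1$'' on both sides. This is the only place real care is needed; everything else is the formal machinery of replacing $H$-groups by groups (handled by the appendix) and the strict equivariance computation in step (2), which is a one-line check. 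I would also remark that the case $X=BG$ of this theorem is precisely what is used later to identify the Getzler $S^1$-action on $\Omega^2 BG$ with the one on $\Omega G$, cf.\ the forward references to Propositions~\ref{actions sur lacets doubles homotopes} and~\ref{lacets pointe d'un groupe est BV}.
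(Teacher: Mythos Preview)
There is a genuine gap: your description of the $S^1$-action on $\Omega^2 X$ is incorrect, and this error propagates through steps~(2)--(4). You write that the action on $\Omega^2 X=\Omega(\Omega X)$ is ``rotation of the outer loop coordinate,'' and then compute $(\tau\cdot r(\ell))(t)=r(\ell)(t+\tau)=\ell(t+\tau)\ell(0)^{-1}$. But this is not an element of $\Omega^2 X$: evaluating at $t=0$ gives $\ell(\tau)\ell(0)^{-1}$, which is not the constant loop. Naive outer rotation simply does not preserve the based double loop space inside $L\Omega X$; that is precisely why the theorem has content. The correct $S^1$-action on $\Omega^2 X$ is the Getzler action, i.e.\ rotation of the disk on $\mathrm{map}\bigl((E^2,S^1),(X,*)\bigr)$, and under the canonical homeomorphism $\theta$ to $\Omega\Omega X$ this is \emph{not} $w\mapsto (t\mapsto w(t+\tau))$.

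The paper's argument separates the two issues you have conflated. First, one equips $\Omega G$ (for any $H$-group $G$) with the action \emph{transported from $LG$ along $r$}: explicitly $(\tau\cdot w)(t)=w(t+\tau)w(\tau)^{-1}$. For this action the retract $r$ is equivariant essentially by construction (Proposition~\ref{section est equivariante}), and indeed your own formula $r(\tau\cdot\ell)(t)=\ell(t+\tau)\ell(\tau)^{-1}$ already equals $(\tau\cdot r(\ell))(t)$ for \emph{this} action, so no correction homotopy is needed there. The real work is then Proposition~\ref{actions sur lacets doubles homotopes}: when $G=\Omega X$, this transported action on $\Omega^2 X$ is homotopic to the disk-rotation action. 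That step requires an explicit homotopy of maps of pairs $(S^1\times S^1\times I,\,S^1\times S^1\times\{0,1\})\to(E^2,S^1)$, obtained by filling a triangle in the disk; your ``$s\mapsto\ell(s\tau)$'' idea is pointing in the right direction but cannot be made sense of until you replace the spurious outer-rotation target by the genuine disk-rotation one and track the homeomorphism $\theta$.
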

Theorem~\ref{comparaison action du cercle sur lacets libres et lacets
 doubles} follows from Propositions~\ref{definition de l'action}i),
\ref{section est equivariante}
and \ref{actions sur lacets doubles homotopes}.
In~\cite[Proposition 28]{menichi:stringtopspheres}), the same theorem is proved but the
retract was defined in a different way.
\vspace{.3cm}
\begin{proposition}~\label{definition de l'action}

i) Let $G$ be a $H$-group.
Then $\Omega G$ is equipped with an action of $S^{1}$
up to homotopy.

ii) Let $G$ be a $H$-group acting up to homotopy on a topological
space $X$. Then $\Omega G\times X$ is equipped with an action of $S^{1}$
up to homotopy such that
 $\Theta_{G,X}:\Omega G\times X\rightarrow LX$
is a morphism of $S^1$-spaces up to homotopy.
\end{proposition}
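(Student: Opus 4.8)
The plan is to write down the $S^1$-action by an explicit formula when $G$ is a \emph{genuine} topological group acting strictly on $X$, verify everything by hand in that case, and then upgrade to an arbitrary $H$-group via the split-fibration formalism of the appendix. Observe first that i) is the special case $X=\{\ast\}$ of ii): for a point the equivariance of $\Theta_{G,\{\ast\}}$ is vacuous and the $S^1$-action on $\Omega G\times\{\ast\}=\Omega G$ is all that remains. So it suffices to treat ii).

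Suppose then that $G$ is a topological group acting strictly on $X$ via $(g,x)\mapsto g\cdot x$. Writing $S^1=\mathbb{R}/\mathbb{Z}$ and regarding $w\in\Omega G$ as a map $w\colon\mathbb{R}/\mathbb{Z}\to G$ with $w(0)=e$, I would define, for $z\in S^1$,
$$\rho_z(w)(t)=w(t+z)\,w(z)^{-1},\qquad \sigma_z(w,x)=\bigl(\rho_z(w),\,w(z)\cdot x\bigr).$$
A direct computation using only the group and action axioms gives $\rho_0=\mathrm{id}$ and $\rho_z\circ\rho_{z'}=\rho_{z+z'}$, hence $\sigma$ is a genuine (continuous) $S^1$-action on $\Omega G\times X$, and $\rho$ one on $\Omega G$. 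Writing $\mathrm{rot}$ for the rotation action on $LX=\mathrm{map}(S^1,X)$, the loop $\mathrm{rot}_z(\Theta_{G,X}(w,x))$ is $t\mapsto w(t+z)\cdot x$, and since $\rho_z(w)(t)\cdot(w(z)\cdot x)=(w(t+z)w(z)^{-1}w(z))\cdot x=w(t+z)\cdot x$ this equals $\Theta_{G,X}(\sigma_z(w,x))$; thus $\Theta_{G,X}$ is strictly $S^1$-equivariant. (When $X=G$ with left translation, $\sigma$ is just the transport of $\mathrm{rot}$ along the homeomorphism $\Theta_{G,G}$.)

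For a general $H$-group $G$ these formulas no longer make literal sense — $w(z)^{-1}$ is only a homotopy inverse, $\rho_z(w)(0)$ is only homotopic to $e$, and the identities above use associativity — but the maps can be rewritten so that they do. The map $\rho_z$ is the composite $\Omega G\xrightarrow{j}LG\xrightarrow{\mathrm{rot}_z}LG\xrightarrow{r}\Omega G$, where $r\colon LG\twoheadrightarrow\Omega G$ is the retraction of $j$ introduced above, and $(w,x)\mapsto w(z)\cdot x$ is the composite $\Omega G\times X\xrightarrow{\mathrm{ev}_z\times\mathrm{id}}G\times X\to X$; both are honest continuous maps, so $\sigma_z(w,x)=(\rho_z(w),w(z)\cdot x)$ is a well-defined self-map of $\Omega G\times X$. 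What remains is $\sigma_1\simeq\mathrm{id}$ (immediate from $r\circ j\simeq\mathrm{id}_{\Omega G}$ and the unit homotopy of the action), $\sigma_z\circ\sigma_{z'}\simeq\sigma_{z+z'}$, and $\Theta_{G,X}\circ\sigma_z\simeq\mathrm{rot}_z\circ\Theta_{G,X}$; each is obtained from the strict identity of the group case by replacing every use of a group or action axiom with the homotopy furnished by the $H$-group structure. The step I expect to be the real obstacle is exactly this last one — arranging these homotopies coherently (for instance checking $r\circ\mathrm{rot}_z\circ j\circ r\simeq r\circ\mathrm{rot}_z$ as maps $LG\to\Omega G$) and first producing $r$ itself with the properties needed — which is why that technical bookkeeping is deferred to the appendix on split fibrations; over a genuine group the content of the proposition is just the computation of the previous paragraph.
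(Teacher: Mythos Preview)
Your approach is essentially the paper's: define the $S^1$-action on $\Omega G$ as the composite $\Omega G\xrightarrow{j}LG\xrightarrow{\mathrm{rot}}LG\xrightarrow{r}\Omega G$ and on $\Omega G\times X$ by the formula $s.(w,x)=(s.w,\,w(s)\cdot x)$, then observe that $\Theta_{G,X}$ is equivariant up to homotopy. The paper's proof records only these two definitions and declares the equivariance ``easy to see''; your extra step of first treating the strict-group case explicitly and then upgrading via the split-fibration appendix is more detailed exposition but not a different idea.
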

\begin{proof}

i) We define the action of $S^{1}$ on $\Omega G$ as the composition of
$$S^{1}\times\Omega G\buildrel{S^{1}\times j}\over\hookrightarrow
S^{1}\times LG\buildrel{action}\over\rightarrow LG
\buildrel{r}\over\twoheadrightarrow\Omega G.$$

ii) We define the action of $S^1$ on $\Omega G\times X$ by
$s.(w,x):=(s.w,w(s).x)$, $s\in S^1$, $w\in\Omega$ and $x\in X$.
Here $s.w$ denote the action of $s\in S^1$ on $w\in\Omega G$ given in i).
It is easy to see that $\Theta_{G,X}$ is a morphism of $S^1$-spaces
up to homotopy. 
\end{proof}
\begin{proposition}\label{fonctorialite de l'action}
Let $G_1$ and $G_2$ be two $H$-groups.
Let $f:(G_1,e)\rightarrow (G_2,e)$ be an homomorphism of $H$-spaces
(in the sense of~\cite[p. 35]{Spanier:livre}).
Then $\Omega f:\Omega(H_1,e)\rightarrow \Omega(H_2,e)$
is a morphism of $S^{1}$-spaces up to homotopy.
\end{proposition}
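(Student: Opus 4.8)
The plan is to reduce the statement to the naturality, up to homotopy, of the retraction $r$ of Section~\ref{action du cercle sur les lacets d'un groupe}, and then to derive that naturality from the hypothesis that $f$ commutes with the multiplications (and hence with the homotopy inverses) up to homotopy. Throughout, ``morphism of $S^1$-spaces up to homotopy'' is checked as the commutativity up to homotopy of the evident square.

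First I would note that, in the definition of the $S^1$-action on $\Omega G$ as the composite $r\circ\mathrm{rot}\circ(S^1\times j)$, the last two maps are strictly natural in the $H$-group: the rotation map $\mathrm{rot}\colon S^1\times LX\to LX$ is natural in $X$ on the nose, and $j\circ\Omega g= Lg\circ j$ for any $g$, where $Lg=\mathrm{map}(S^1,g)$. Writing $\bar a_i$ for the action on $\Omega G_i$, composing these two naturalities gives
$$\bar a_2\circ(S^1\times\Omega f)= r_2\circ Lf\circ\mathrm{rot}\circ(S^1\times j_1),$$
so $\Omega f\circ\bar a_1\simeq\bar a_2\circ(S^1\times\Omega f)$ will follow once one knows that $r_2\circ Lf\simeq\Omega f\circ r_1$ as maps $LG_1\to\Omega G_2$.

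To prove $r_2\circ Lf\simeq\Omega f\circ r_1$, I would use that $r_i\circ j_i\simeq\mathrm{id}_{\Omega G_i}$ (Section~\ref{action du cercle sur les lacets d'un groupe}), so $(j_i)_*\colon[Z,\Omega G_i]\to[Z,LG_i]$ is injective for every space $Z$; it therefore suffices to compare the two maps after composing with $j_2$. Using $j_2\circ\Omega f= Lf\circ j_1$ and the defining property $j_i\circ r_i\simeq\rho_i$, where $\rho_i\colon LG_i\to LG_i$ sends $l$ to the loop $t\mapsto l(t)\cdot l(0)^{-1}$ (for $G_i$ an $H$-group this $\rho_i$ is understood via the structure maps, its relevant properties being established in Appendix~\ref{semi-direct}), one gets $j_2\circ\Omega f\circ r_1\simeq Lf\circ\rho_1$ and $j_2\circ r_2\circ Lf\simeq\rho_2\circ Lf$. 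Hence the whole claim reduces to the homotopy $\rho_2\circ Lf\simeq Lf\circ\rho_1\colon LG_1\to LG_2$.

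Passing to adjoints, this last homotopy is a homotopy of maps $LG_1\times S^1\to G_2$ between $(l,t)\mapsto f(l(t))\cdot f(l(0))^{-1}$ and $(l,t)\mapsto f\bigl(l(t)\cdot l(0)^{-1}\bigr)$; writing $a,b\colon LG_1\times S^1\to G_1$ for $a(l,t)=l(t)$ and $b(l,t)=l(0)^{-1}$, these are $\mu_{G_2}\circ(f\times f)\circ(a,b)$ and $f\circ\mu_{G_1}\circ(a,b)$, where one also uses that an $H$-map between $H$-groups automatically commutes up to homotopy with the homotopy inverses (it becomes a morphism of group objects in the homotopy category). Since $f\circ\mu_{G_1}\simeq\mu_{G_2}\circ(f\times f)$ by hypothesis, precomposing this homotopy with $(a,b)$ yields the required one. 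The main thing to watch — the only genuine subtlety — is the $H$-group case: there $\rho_i$ need not land in $\Omega G_i$ on the nose, and one must appeal to Appendix~\ref{semi-direct} for the properties of $r_i$ (and for interpreting $l(0)^{-1}$), whereas for an honest topological monoid with inverse the whole argument is elementary and strict.
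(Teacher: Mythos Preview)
Your proof is correct and follows the same approach as the paper, only in much greater detail: the paper's proof is essentially the three-line assertion that an $H$-space map between $H$-groups is automatically an $H$-group map and that the $S^1$-structure on $\Omega G$ is then ``clearly functorial'' in such maps. What you have written is precisely the unpacking of that word ``clearly'': you trace the functoriality through each constituent map $j$, $\mathrm{rot}$, and $r$, and you supply the concrete homotopy $\rho_2\circ Lf\simeq Lf\circ\rho_1$ (together with the standard fact that $H$-maps between $H$-groups respect homotopy inverses) that makes the naturality of $r$ go through.
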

\begin{proof}
A homomorphism of $H$-spaces between $H$-groups is necessarily
a homomorphism of $H$-groups.
The $S^1$-structure on $\Omega G$ is clearly functorial with respect
to homomorphisms of $H$-groups.
Therefore the $S^1$-structure on $\Omega G$ depends functorially only
on the multiplication of $G$.
\end{proof}
\begin{proposition}\label{section est equivariante}
The retract
$r:LG\twoheadrightarrow\Omega G$
is a morphism of $S^{1}$-spaces up to homotopy.
\end{proposition}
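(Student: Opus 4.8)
The plan is to post-compose with the fibre inclusion $j:\Omega G\hookrightarrow LG$ and reduce the statement to comparing two explicit families of loops.

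First I would observe that $j$ is injective on homotopy classes of maps into $\Omega G$: since $r\circ j\simeq\mathrm{id}_{\Omega G}$, any two maps $f,g$ with target $\Omega G$ and $j\circ f\simeq j\circ g$ satisfy $f\simeq (r\circ j)\circ f\simeq (r\circ j)\circ g\simeq g$. Hence it suffices to prove
$$j\circ r\circ a_{LG}\ \simeq\ j\circ a_{\Omega G}\circ(S^1\times r)$$
as maps $S^1\times LG\to LG$, where $a_{LG}$ denotes the rotation action on $LG$ and, by Proposition~\ref{definition de l'action}i), $a_{\Omega G}=r\circ a_{LG}\circ(S^1\times j)$.

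Next I would substitute the defining homotopy $j\circ r\simeq\beta$, where $\beta(l)$ is the loop $t\mapsto l(t)l(0)^{-1}$. The left-hand side becomes $\beta\circ a_{LG}$, which carries $(s,l)$ to the loop $t\mapsto l(t+s)\,l(s)^{-1}$. The right-hand side is $j\circ r\circ a_{LG}\circ(S^1\times j\circ r)\simeq\beta\circ a_{LG}\circ(S^1\times\beta)$, which carries $(s,l)$ to $\beta$ of the loop $t\mapsto l(t+s)\,l(0)^{-1}$, i.e.\ to $t\mapsto l(t+s)\,l(0)^{-1}\bigl(l(s)\,l(0)^{-1}\bigr)^{-1}=l(t+s)\,l(s)^{-1}$. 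So both composites are homotopic to the single map $(s,l)\mapsto\bigl(t\mapsto l(t+s)\,l(s)^{-1}\bigr)$, and the proposition follows. A more structural variant of the same argument: $r=\mathrm{pr}_1\circ\Theta_{G,G}^{-1}$; by Proposition~\ref{definition de l'action}ii) the homotopy equivalence $\Theta_{G,G}$ is a morphism of $S^1$-spaces up to homotopy, hence so is its homotopy inverse, while $\mathrm{pr}_1:\Omega G\times G\to\Omega G$ is \emph{strictly} equivariant for the action $s.(w,g)=(s.w,w(s).g)$ used there, so the composite $r$ is equivariant up to homotopy.

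The only genuine difficulty is that for a mere $H$-group $G$ the symbol $l(s)^{-1}$, the associativity used above, and the factorization $j\circ r\simeq\beta$ are valid only up to coherent homotopy, so the displayed identities are not literal; supplying the needed coherences is precisely the content of the appendix~\ref{semi-direct} on split fibrations, and once these are granted the argument is formal. When $G$ is an honest topological group everything above is strict except for the single homotopy $j\circ r\simeq\beta$.
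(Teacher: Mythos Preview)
Your ``structural variant'' at the end is exactly the paper's proof: the paper observes that the triangle
\[
\xymatrix{
LG \ar[dr]_{r}& & \Omega G\times G
\ar[ll]_{\Theta_{G,G}}^{\simeq}
\ar[dl]^{p_{\Omega G}}\\
&\Omega G
}
\]
commutes up to homotopy, that $\Theta_{G,G}$ is $S^1$-equivariant up to homotopy by Proposition~\ref{definition de l'action}~ii), and that the projection $p_{\Omega G}$ is $S^1$-equivariant by the very definition of the action on $\Omega G\times G$; hence so is $r$.

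Your first, computational approach is also correct and is a genuine alternative. It trades the appeal to Proposition~\ref{definition de l'action}~ii) for an explicit comparison of the two loops $t\mapsto l(t+s)l(s)^{-1}$, using the cancellation of $l(0)^{-1}$ against $l(0)$. The cost, as you note, is that for a mere $H$-group the identities $l(0)^{-1}l(0)\simeq e$ and the associativity are only up to homotopy, so one must invoke the coherences from the appendix to make the cancellation rigorous; the paper's route hides this bookkeeping inside the single statement that $\Theta_{G,G}$ is equivariant up to homotopy. For a strict topological group your computation is entirely literal and arguably more transparent than the paper's.
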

\begin{proof}
Denote by $p_{\Omega G}:\Omega G\times G\rightarrow\Omega G$ the projection
on the first factor.
By definition of $r$, the diagram

$$\xymatrix{
LG \ar[dr]_{r}& & \Omega G\times G
\ar[ll]_{\Theta_{G,G}}^{\simeq}
\ar[dl]^{p_{\Omega G}}\\
&\Omega G
}$$
commutes up to homotopy.
By Proposition~\ref{definition de l'action} ii),
$\Theta_{G,G}$ is a morphism of $S^1$-spaces up to homotopy.
By definition of the action of the circle $S^1$ on $\Omega G\times G$,
the projection $p_{\Omega G}$ is also a morphism of
$S^1$-spaces up to homotopy.
\end{proof}
\begin{proposition}\label{actions sur lacets doubles homotopes}
Let $(X,*)$ be a pointed space. The action of $S^{1}$ up to homotopy
on $\Omega G$ when $G=\Omega X$ given by Proposition~\ref{definition
  de l'action} is homotopic to the action of $S^{1}$ on
$map\left((E^{2},S^{1}),(X,*)\right)$ given by rotating the disk $E^{2}$.
\end{proposition}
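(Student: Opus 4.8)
The plan is to prove the statement directly by making both $S^{1}$-actions explicit on one common point-set model of $\Omega^{2}X$ and then writing down a homotopy between them. (Alternatively: the retract $r$ is characterised up to homotopy by $j\circ r\simeq\bigl(\ell\mapsto(t\mapsto\ell(t)\ell(0)^{-1})\bigr)$, and the retract used in \cite[Proposition~28]{menichi:stringtopspheres}, where the present statement is proved, has the same defining property up to homotopy; so the two retracts are homotopic and the statement follows from loc.\ cit. I sketch the self-contained version below.)

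Realise $\Omega X$ as the topological monoid of Moore loops, so that loop reversal $\overline{(\ \ )}$ is a homotopy inverse and $G:=\Omega X$ is an $H$-group. A point of $\Omega G=\Omega\Omega X$ is then, after reparametrisation, a map $w\colon I_{\theta}\times I_{\rho}\to X$ carrying $\partial(I_{\theta}\times I_{\rho})$ to the base point $*$, where $\theta$ is the outer ($G$-) and $\rho$ the inner ($X$-) loop coordinate; write $w_{\theta}\in\Omega X$ for the loop $\rho\mapsto w(\theta,\rho)$. Collapsing a suitable contractible arc in the boundary of the square gives a homotopy equivalence $\Omega\Omega X\simeq map((E^{2},S^{1}),(X,*))$, on which the action of the statement is rotation of the disc. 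The free loop space $L\Omega X$ is the space of maps $S^{1}_{\theta}\times I_{\rho}\to X$ killing $S^{1}_{\theta}\times\partial I_{\rho}$, with $S^{1}$ acting by honest rotation of the $\theta$-coordinate; $j\colon\Omega\Omega X\hookrightarrow L\Omega X$ includes the $w$ that are moreover trivial on $\{\theta_{0}\}\times I_{\rho}$ for a fixed $\theta_{0}\in S^{1}_{\theta}$, and we take $r$ with $j\circ r\simeq\bigl(\ell\mapsto(\theta\mapsto\ell_{\theta}\cdot\overline{\ell_{\theta_{0}}})\bigr)$.

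Unwinding Proposition~\ref{definition de l'action}i) for $G=\Omega X$, the action is the composite $S^{1}\times\Omega\Omega X\xrightarrow{1\times j}S^{1}\times L\Omega X\xrightarrow{\mathrm{act}}L\Omega X\xrightarrow{r}\Omega\Omega X$, and since everything is taken up to homotopy we may replace $r$ by the displayed formula. Chasing $w$ through the three maps yields
$$(\phi\cdot w)_{\theta}\ =\ w_{\theta-\phi}\cdot\overline{\,w_{\theta_{0}-\phi}\,}\quad\text{in }\Omega X,$$
i.e.\ ``rotate $w$ fully in $\theta$, then concatenate, in the $\rho$-direction, with the $\theta$-independent correction loop $\overline{w_{\theta_{0}-\phi}}$'', the correction being exactly what restores triviality in the $\rho$-direction along $\theta=\theta_{0}$. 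Both the action of the statement, transported to $\Omega\Omega X$ through the equivalence above, and the action of Proposition~\ref{definition de l'action}i) are self-maps of $\Omega\Omega X$ realising ``$\theta$-rotation made to respect the boundary condition''; one checks that, under the chosen identification, the two agree up to a standard reparametrisation of $\rho$ supported near $\partial E^{2}$, together with the concatenated correction loop $\overline{w_{\theta_{0}-\phi}}$, which lives in the $\rho$-direction near the end of $I_{\rho}$ from which $\partial E^{2}=S^{1}$ comes. The statement thus amounts to homotoping this discrepancy away continuously in $(\phi,w)$, and I would do this by an explicit ``sweep to the boundary'' homotopy: reparametrise the $\rho$-coordinate so that, as the deformation parameter runs from $0$ to $1$, the correction loop $\overline{w_{\theta_{0}-\phi}}$ is pushed into an ever thinner collar of $\partial E^{2}$ and absorbed into it — the locus on which $w$ is already $*$ — while $w_{\theta-\phi}$ is stretched back out over the whole disc. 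The resulting family is a homotopy, through maps $S^{1}\times map((E^{2},S^{1}),(X,*))\to map((E^{2},S^{1}),(X,*))$, from the action of Proposition~\ref{definition de l'action}i) to rotation of the disc.

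The delicate step is precisely this last construction: the collaring and reparametrisation must be jointly continuous in the circle parameter $\phi$ and in $w$, must keep the map inside $map((E^{2},S^{1}),(X,*))$ throughout (the boundary circle must continue to go to $*$), must be stationary when the correction loop is already trivial — in particular where $\theta_{0}-\phi$ returns $w$ to its trivial slice — and must be compatible with the chosen equivalence $\Omega\Omega X\simeq map((E^{2},S^{1}),(X,*))$. Once this bookkeeping is carried out, the two $S^{1}$-actions are homotopic, which is the assertion.
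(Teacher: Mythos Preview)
Your outline is plausible and the formula $(\phi\cdot w)_{\theta}=w_{\theta-\phi}\cdot\overline{w_{\theta_{0}-\phi}}$ is the correct unwinding of the action, but the proof is not complete: you describe the ``sweep to the boundary'' homotopy only by the properties it must satisfy and then stop. The step you yourself flag as delicate at the end is the entire analytic content of the proposition; a complete argument would need explicit formulas for the collar reparametrisation and a verification that the boundary condition and the chosen equivalence $\Omega\Omega X\simeq\mathrm{map}((E^{2},S^{1}),(X,*))$ are respected throughout the deformation. As written this is a programme, not a proof.

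The paper sidesteps all of this bookkeeping with a cleaner device. Rather than building the homotopy inside $\Omega\Omega X$ (where the boundary condition at $\theta=\theta_{0}$ must be maintained at every stage, which is exactly what makes your construction delicate), it uses that $j:\Omega G\hookrightarrow LG$ is a monomorphism in the homotopy category: hence it suffices to compare $j\circ\mathrm{action}\circ(S^{1}\times\theta)$ and $j\circ\theta\circ\mathrm{action}$ as maps into $L\Omega X$, where there is no $\theta$-boundary to respect. Passing to adjoints, each of these becomes a map $S^{1}\times\mathrm{map}((E^{2},S^{1}),(X,*))\times S^{1}\times I\to X$ that factors through an explicit map of pairs
\[
(S^{1}\times S^{1}\times I,\; S^{1}\times S^{1}\times\{0,1\})\longrightarrow(E^{2},S^{1}),
\]
and the whole question reduces to showing two such maps into the disk are homotopic rel boundary. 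The paper dispatches this in one line: fill the triangle with vertices $e^{is}$, $e^{is}x$ and $1$. This is both shorter and free of the continuity and compatibility checks you were facing.

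Your parenthetical alternative --- matching the present $r$ with the retract of \cite[Proposition~28]{menichi:stringtopspheres} via their common defining property $j\circ r\simeq\bigl(\ell\mapsto(t\mapsto\ell(t)\ell(0)^{-1})\bigr)$ and then invoking that result --- is logically valid, but the point of the paper here is precisely to give a self-contained argument for the present definition of $r$, so that route would not serve.
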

\begin{proof}
The map $I\times S^1\rightarrow E^2$ sending $(t,x)\in\mathbb{R}\times\mathbb{C}$ to the barycenter of 
$x$ with weight $t$ and of $1$ with weight $1-t$,
gives the canonical homeomorphism
$$\theta:map\left((E^{2},S^{1}),(X,*)\right)\buildrel{\cong}\over\rightarrow
\Omega\Omega X$$
defined by:
 for $f\in map\left((E^{2},S^{1}),(X,*)\right)$, $\theta(f)$ is the
map sending $x\in S^1\subset\mathbb{C}$ to the loop
$t\in I\mapsto f(tx+1-t)$.

Since $j$ is a monomorphism in the homotopy category, to see that
$\theta$ commutes with the $S^1$-action up to homotopy, it suffices to see
that the two maps
$j\circ action\circ (S^1\times\theta)$ and $j\circ\theta\circ action$
are homotopic.

The adjoint of $j\circ\theta\circ action$ is the map
$$S^1\times map\left((E^{2},S^{1}),(X,*)\right)\times S^1\times I\rightarrow X$$
$$(e^{is},f,x,t)\mapsto f(e^{is}tx+(1-t)e^{is}).
$$
By definition of $r$, the adjoint of $j\circ action\circ (S^1\times\theta)$ is the map
$$S^1\times map\left((E^{2},S^{1}),(X,*)\right)\times S^1\times I\rightarrow X$$
$$(e^{is},f,x,t)\mapsto\begin{cases}
f\left((1-2t)e^{is}+2t\right) &\text{if $t\leq\frac{1}{2}$},\\
f\left((2t-1)e^{is}x+2-2t\right)& \text{if $t\geq\frac{1}{2}$}.
\end{cases}
$$
The maps of pairs of spaces
$$(S^1\times S^1\times I, S^1\times S^1\times \{0,1\})
\rightarrow (E^2,S^1)$$
$$(e^{is},x,t)\rightarrow e^{is}tx+(1-t)e^{is}$$
and
$$
(e^{is},x,t)\mapsto\begin{cases}
(1-2t)e^{is}+2t &\text{if $t\leq\frac{1}{2}$},\\
(2t-1)e^{is}x+2-2t& \text{if $t\geq\frac{1}{2}$}.
\end{cases}
$$
are homotopic: to construct the homotopy, fill the triangle of vertices
$e^{is}$, $e^{is}x$ and $1$.
Therefore $j\circ action\circ (S^1\times\theta)$ and $j\circ\theta\circ action$
are homotopic.
\end{proof}
\begin{proposition}\label{lacets pointe d'un groupe est BV}
Let $G$ be a topological monoid which is also a $H$-group.
Then the algebra
$H_*(\Omega G)$ equipped with the $H_*(S^1)$-module structure given
by Proposition~\ref{definition de l'action}i) is a Batalin-Vilkovisky algebra isomorphic to the
 Batalin-Vilkovisky algebra $H_*(\Omega^2 BG)$ given by~\cite{Getzler:BVAlg}.
\end{proposition}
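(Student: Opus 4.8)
The plan is to reduce the statement to the group completion map $\alpha_G\colon G\to\Omega BG$ and then to feed it into Propositions~\ref{fonctorialite de l'action} and~\ref{actions sur lacets doubles homotopes}.

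First I would recall that, since $G$ is a topological monoid whose monoid of path components $\pi_0(G)$ is a group (which holds because $G$ admits a homotopy inverse), the canonical map into the loops on the classifying space, $\alpha_G\colon G\to\Omega BG$, is a homotopy equivalence. Moreover $\alpha_G$ is a homomorphism of $H$-spaces: it carries the monoid multiplication of $G$ to the loop concatenation on $\Omega BG$ up to homotopy (it is in fact an $A_\infty$-map coming from the bar construction, but only the $H$-map property between the two $H$-groups $G$ and $\Omega BG$ is needed here). Applying the loop functor, $\Omega\alpha_G\colon\Omega G\to\Omega(\Omega BG)=\Omega^2 BG$ is a homotopy equivalence which is again an $H$-map, so it induces an isomorphism of Pontryagin algebras $H_*(\Omega G)\buildrel{\cong}\over\rightarrow H_*(\Omega^2 BG)$. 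By the Eckmann--Hilton argument the Pontryagin product on $H_*(\Omega G)$ coming from the monoid structure of $G$ agrees with the one coming from loop concatenation on $\Omega G$, so there is no ambiguity in the algebra structure being transported.

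Next I would check that $\Omega\alpha_G$ also respects the circle actions up to homotopy. On the source, $\Omega G$ carries the $S^1$-action of Proposition~\ref{definition de l'action}i) attached to the $H$-group $G$; on the target, $\Omega^2 BG=\Omega(\Omega BG)$ carries the $S^1$-action of Proposition~\ref{definition de l'action}i) attached to the $H$-group $\Omega BG$, and by Proposition~\ref{actions sur lacets doubles homotopes} applied with $X=BG$ this latter action is homotopic to the $S^1$-action on $map\big((E^2,S^1),(BG,*)\big)\cong\Omega^2 BG$ given by rotating the disk $E^2$ --- which is exactly the action Getzler~\cite{Getzler:BVAlg} uses to define the operator on $H_*(\Omega^2 BG)$. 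Since $\alpha_G$ is an $H$-map between $H$-groups, Proposition~\ref{fonctorialite de l'action} tells us that $\Omega\alpha_G$ is a morphism of $S^1$-spaces up to homotopy, hence induces an isomorphism of $H_*(S^1)$-modules; in other words $H_*(\Omega\alpha_G)$ intertwines the operator $\BV$ on $H_*(\Omega G)$ coming from Proposition~\ref{definition de l'action}i) with Getzler's operator on $H_*(\Omega^2 BG)$.

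Combining the last two paragraphs, $H_*(\Omega\alpha_G)$ is an isomorphism of graded algebras that carries the operator on $H_*(\Omega G)$ to the operator on $H_*(\Omega^2 BG)$; since the latter satisfies the Batalin--Vilkovisky axioms by Getzler's theorem, so does the former, and $H_*(\Omega\alpha_G)$ is an isomorphism of Batalin--Vilkovisky algebras. I expect the only delicate point to be the first step: producing $\alpha_G$ as a homotopy equivalence that is simultaneously multiplicative up to homotopy (so that it is an $H$-map and induces an algebra isomorphism), which rests on the standard properties of the bar construction of a group-like topological monoid together with the Eckmann--Hilton identification of the two products on $\Omega G$. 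Everything after that is a formal assembly of the results of this section.
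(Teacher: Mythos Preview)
Your proof is correct and follows essentially the same route as the paper: use the $H$-map $G\buildrel{\simeq}\over\rightarrow\Omega BG$ (the paper calls it $h$, you call it $\alpha_G$), apply Proposition~\ref{fonctorialite de l'action} to make $\Omega\alpha_G$ an $S^1$-map up to homotopy, invoke Proposition~\ref{actions sur lacets doubles homotopes} to identify the target action with Getzler's, and then transport the BV structure back along the resulting isomorphism. The only minor slip is that without CW-type hypotheses $\alpha_G$ is a priori only a \emph{weak} homotopy equivalence (as the paper states), but this suffices since only the induced isomorphism on homology is used.
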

\begin{proof}
Consider the classifying space of $G$, $BG$.
There exists a homomorphism of $H$-spaces
$h:G\buildrel{\simeq}\over\rightarrow\Omega BG$
which is a weak homotopy equivalence since $\pi_0(G)$ is a group.
By Propositions~\ref{fonctorialite de l'action}
and~\ref{actions sur lacets doubles homotopes},
$\Omega h:\Omega G\buildrel{\simeq}\over\rightarrow\Omega^2 BG$
is a morphism of $S^1$-spaces up to homotopy.
Therefore, 
$H_*(\Omega h)$ is both an isomorphism of graded algebras and
of $H_*(S^1)$-modules. Since $H_*(\Omega^2 BG)$ is a
Batalin-Vilkovisky algebra~\cite{Getzler:BVAlg}, 
$H_*(\Omega G)$ is also a Batalin-Vilkovisky algebra.
\end{proof}
\section{The Batalin-Vilkovisky algebra $H_*(\Omega^2BG)\otimes\mathbb{H}_*(M)$}
This section is the heart of the paper.
We show (Theorem~\ref{Structure BV sur le produit tensoriel}) that the tensor
product of algebras $H_*(\Omega G)\otimes\mathbb{H}_*(M)$ equipped with
an operator $B_{\Omega G\times M}$ is a Batalin-Vilkovisky algebra related to
the Batalin-Vilkovisky algebra $\mathbb{H}_*(LM)$ of Chas and Sullivan.
Extending a result of Hepworth (Corollary~\ref{string homology Lie group}),
we give an explicit formula for this operator $B_{\Omega G\times M}$.
We deduce then the morphism of Batalin-Vilkovisky algebra from $H_*(\Omega G)$
to $\mathbb{H}_*(LM)$ (Corollary~\ref{comparaison delta}).

Let us start by giving a short proof of the Felix and Thomas theorem.
\begin{proof}[Proof of Theorem~\ref{Felix et Thomas}]
Since $e.m=m$ for $m\in M$,
the map 
$\Theta_{G,M}:\Omega G\times M\rightarrow LM$
is a morphism of fiberwise monoids from the projection map
$p_M:\Omega G\times M\twoheadrightarrow M$ to the evaluation map
$ev:LM\twoheadrightarrow M$.
Therefore by~\cite[part 2) of Theorem 8.2]{gruher-salvatore:genstringtopop}, the composite
$$
\mathbb{H}_*(\Theta_{G,M}):H_*(\Omega G)\otimes\mathbb{H}_*(M)
\rightarrow H_{*+d}(\Omega G\times M)
\rightarrow\mathbb{H}_*(LM)  
$$
is a morphism of graded algebras.
\end{proof}
\begin{theorem}\label{Structure BV sur le produit tensoriel}
Let $G$ be a topological monoid with a homotopy inverse, acting
on a smooth compact oriented manifold $M$.
Assume that $H_*(\Omega G)$ is torsion free.
Consider the operator $B_{\Omega G\times M}$ given by the action of $H_*(S^1)$ on $H_*(\Omega G)\otimes\mathbb{H}_*(M)$
given by Proposition~\ref{definition de l'action} ii).
Then the tensor product of algebras $H_*(\Omega G)\otimes \mathbb{H}_*(M)$
equipped with $B_{\Omega G\times M}$ is a Batalin-Vilkovisky algebra such that
$$
H_*(\Theta_{G,M}):H_*(\Omega G)\otimes\mathbb{H}_*(M)\rightarrow \mathbb{H}_*(LM)
$$
is a morphim of Batalin-Vilkovisky algebras.
\end{theorem}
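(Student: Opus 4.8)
The plan is to reduce everything to the seven-term relation of Definition~\ref{def BV algebre} for $\BV_{\Omega G\times M}$; the rest is formal. Since $H_*(\Omega G)$ is torsion free, Kunneth identifies $H_*(\Omega G)\otimes\mathbb{H}_*(M)$ with $H_{*+d}(\Omega G\times M)$, and through this identification $\BV_{\Omega G\times M}$ is $H_{*+d}$ of the operator $z\mapsto\nu_*([S^1]\times z)$ attached to the $S^1$-action $\nu$ of Proposition~\ref{definition de l'action} ii); because $\nu$ is an action up to homotopy, $[S^1]\times[S^1]$ dies in $H_2(S^1)$, so this operator squares to zero and $\BV_{\Omega G\times M}$ genuinely makes $H_*(\Omega G)\otimes\mathbb{H}_*(M)$ into an $H_*(S^1)=E(i_1)$-module. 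Similarly the Chas--Sullivan operator $\BV_{LM}$ on $\mathbb{H}_*(LM)=H_{*+d}(LM)$ is $H_{*+d}$ of $z\mapsto\rho_*([S^1]\times z)$ for the rotation action $\rho$, and $H_*(\Theta_{G,M})$ is $H_{*+d}(\Theta_{G,M})$ precomposed with the Kunneth isomorphism (this is how it arises in the proof of Theorem~\ref{Felix et Thomas}). Since $\Theta_{G,M}\circ\nu\simeq\rho\circ(\mathrm{id}_{S^1}\times\Theta_{G,M})$ by Proposition~\ref{definition de l'action} ii), naturality of the homology cross product gives $H_*(\Theta_{G,M})\circ\BV_{\Omega G\times M}=\BV_{LM}\circ H_*(\Theta_{G,M})$. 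Granting the seven-term relation for $\BV_{\Omega G\times M}$, the Poisson identity follows from it formally and the Jacobi identity from Koszul's computation recalled after Definition~\ref{def BV algebre}; so $H_*(\Omega G)\otimes\mathbb{H}_*(M)$ is a Batalin--Vilkovisky algebra, and $H_*(\Theta_{G,M})$, being an algebra map (Theorem~\ref{Felix et Thomas}) that intertwines the operators, is a morphism of Batalin--Vilkovisky algebras.

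The seven-term relation for $\BV_{\Omega G\times M}$ is the heart of the matter and the step I expect to be hardest. Being trilinear, it need only be checked on triples of elements each taken from one of the two subalgebras $H_*(\Omega G)\otimes[M]$ and $1\otimes\mathbb{H}_*(M)$, which together generate $H_*(\Omega G)\otimes\mathbb{H}_*(M)$. On $1\otimes\mathbb{H}_*(M)$ the operator vanishes, since $\nu$ restricts to the trivial action on $\{e\}\times M$ (the constant loop $e$ is $S^1$-fixed up to homotopy and $e(s)=e$ acts as $\mathrm{id}_M$), so the relation is trivial for triples from that factor. On $H_*(\Omega G)\otimes[M]$ one factors $\nu$ through the evaluation $(s,w)\mapsto w(s)\in G$ followed by the action $G\times M\to M$; as $H_{>0}(G)$ kills $[M]\in H_d(M)$ for dimension reasons, only the degree-zero part survives and it reproduces $\BV_{\Omega G}(x)\otimes[M]$, where $\BV_{\Omega G}$ is the operator of Proposition~\ref{lacets pointe d'un groupe est BV}; hence this factor is $\BV$-closed and the relation there follows from Proposition~\ref{lacets pointe d'un groupe est BV}. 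The mixed triples are the real content: they reduce to evaluating $\BV_{\Omega G\times M}$ on a product $x\otimes a$ with $x\in H_*(\Omega G)$ and $a\in\mathbb{H}_*(M)$, and the same factorization of $\nu$ --- now with $a$ general, so the dimension obstruction is absent --- expresses $\BV_{\Omega G\times M}(x\otimes a)$ in terms of $\BV_{\Omega G}$, the diagonal of the Hopf algebra $H_*(\Omega G)$ (torsion-freeness is used here once more), and the action on $\mathbb{H}_*(M)$ coming from the $G$-action on $M$. This is precisely the Hepworth-type formula of Proposition~\ref{operateur BV sur le produit lacets espace}, after which the mixed instances of the seven-term relation are a bookkeeping exercise with the Hopf and module-algebra axioms.

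A cleaner alternative that avoids the case analysis is to run the argument at the space level: regard $p_M\colon\Omega G\times M\twoheadrightarrow M$ as a fiberwise topological monoid over a closed oriented manifold, endowed by Proposition~\ref{definition de l'action} ii) with an $S^1$-action compatible with its fiberwise multiplication, so that the Gruher--Salvatore construction~\cite{gruher-salvatore:genstringtopop} already used for Theorem~\ref{Felix et Thomas} outputs the Batalin--Vilkovisky algebra $H_*(\Omega G)\otimes\mathbb{H}_*(M)$ directly and makes $H_*(\Theta_{G,M})$ a morphism of Batalin--Vilkovisky algebras, since $\Theta_{G,M}$ is a morphism of such structures. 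On this route the only nontrivial point is the compatibility between the $S^1$-action $\nu(s,w,m)=(s.w,w(s).m)$ and the fiberwise Pontryagin multiplication $(w,m)\cdot(w',m)=(ww',m)$, which is once more where the $H$-group hypothesis and the analysis of Section~\ref{action du cercle sur les lacets d'un groupe} enter.
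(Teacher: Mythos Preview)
Your diagnosis of what must be shown is right: once the seven-term relation holds for $\BV_{\Omega G\times M}$, everything else follows from Theorem~\ref{Felix et Thomas} and Proposition~\ref{definition de l'action}~ii). But the reduction you propose for the seven-term relation is not valid. Trilinearity over $\Bbbk$ lets you reduce to a $\Bbbk$-linear spanning set; the two subalgebras $H_*(\Omega G)\otimes[M]$ and $1\otimes\mathbb{H}_*(M)$ do not span linearly, they only generate \emph{multiplicatively}, and the seven-term relation does not descend to multiplicative generators. In Koszul's language, writing $\Phi_3(a,b,c)$ for the seven-term expression, one has $\Phi_3(aa',b,c)=a\,\Phi_3(a',b,c)\pm\Phi_3(a,b,c)\,a'+\Phi_4(a,a',b,c)$, and you have no control over $\Phi_4$. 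Concretely, knowing the relation for triples of the form $(x,y,a)$ with $x,y\in H_*(\Omega G)$ and $a\in\mathbb{H}_*(M)$ does not give it for $(x_1\otimes a_1,\,x_2\otimes a_2,\,x_3\otimes a_3)$; the latter is what trilinearity actually requires. One \emph{can} attack that case head-on with the formula of Proposition~\ref{operateur BV sur le produit lacets espace}, but this needs, in addition to the Hopf axioms and the seven-term relation for $\BV_{\Omega G}$, that $\mathbb{H}_*(M)$ is an $H_*(G)$-module \emph{algebra} for the intersection product and a compatibility of $\sigma_*$ with products --- genuine ingredients, not bookkeeping. Your alternative route via Gruher--Salvatore also runs into trouble: the action $\nu$ does not cover the identity on $M$ (the base point moves to $w(s).m$), so $p_M$ is not an $S^1$-equivariant fiberwise monoid, and the compatibility you single out as ``the only nontrivial point'' in fact fails on the nose.

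The paper bypasses the seven-term relation altogether by enlarging the target. It works on $\Omega G\times LM$, where it puts two $S^1$-actions: a \emph{diagonal} one (the product of the action of Proposition~\ref{definition de l'action}~i) on $\Omega G$ and loop rotation on $LM$) and a \emph{twisted} one, $s.(w,l)=(s.w,\,t\mapsto w(s).l(t+s))$, which restricts along the trivial section $\sectiontriviale:M\hookrightarrow LM$ to the action of Proposition~\ref{definition de l'action}~ii). For the diagonal action, $H_*(\Omega G)\otimes\mathbb{H}_*(LM)$ is the tensor product of two known Batalin--Vilkovisky algebras (Example~\ref{produit tensoriel BV}). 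The key construction is the self-map $\Phi_{G,M}:(w,l)\mapsto(w,w*l)$ with $(w*l)(t)=w(t).l(t)$: it is a homotopy equivalence (here the homotopy inverse of $G$ is used), a morphism of fiberwise monoids over $M$ (hence an algebra isomorphism on $H_{*+d}$ by~\cite{gruher-salvatore:genstringtopop}), and $S^1$-equivariant from the twisted to the diagonal action. Transporting along $H_*(\Phi_{G,M})$ shows that the twisted action also gives a Batalin--Vilkovisky algebra; restricting along the injective algebra-and-$H_*(S^1)$-module map $H_*(\Omega G)\otimes\mathbb{H}_*(\sectiontriviale)$ then gives the structure on $H_*(\Omega G)\otimes\mathbb{H}_*(M)$. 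Finally $\Theta_{G,M}$ factors as $\mathrm{proj}\circ\Phi_{G,M}\circ(\Omega G\times\sectiontriviale)$, a composite of Batalin--Vilkovisky morphisms. No direct verification of the seven-term relation is needed.
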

\begin{remark}\label{theta homeo dans le cas des groupes de Lie}
Suppose that $G=M$ is a Lie group. Then, with Corollary~\ref{loop algebra des groupes de Lie},
since $\Theta_{G,G}$ is an $S^1$-equivariant
homeomorphism, Theorem~\ref{Structure BV sur le produit tensoriel} is obvious.
\end{remark}
\begin{proof}
Since $H_*(\Omega G)$ is assumed to be torsion free, the Kunneth morphism
is an isomorphism.
Therefore the algebra $H_*(\Omega G)\otimes \mathbb{H}_*(LM)$
can be identified with the algebra $H_{*+d}(\Omega G\times LM)$.
For $w\in\Omega G$ and $l\in LM$, denote by $w*l$ the free loop on $M$ defined by
$
w*l(t)=w(t).l(t)
$, for $t\in S^1$.
Let $\Phi_{G,M}:\Omega G\times LM\rightarrow \Omega G\times LM$
be the map sending $(w,l)$ to $(w,w*l)$.
Since $G$ has an homotopy inverse, $\Phi_{G,M}$ is a homotopy equivalence.
Since $\Phi_{G,M}$ is a morphism of fiberwise monoids,
by~\cite[part 2) of Theorem 8.2]{gruher-salvatore:genstringtopop},
$$
H_*(\Phi_{G,M}):H_*(\Omega G)\otimes \mathbb{H}_*(LM)\rightarrow  H_*(\Omega G)\otimes \mathbb{H}_*(LM)
$$
is an isomorphism of algebras.

Consider the action of $S^1$ on $\Omega G$ up to homotopy given by
Proposition~\ref{definition de l'action} i) and the action of $S^1$ on $LM$
given by rotation of the loops. Consider the induced {\it diagonal} action of $S^1$
on $\Omega G\times LM$. Explicitly, if $G$ is a group, the diagonal action of $s\in S^1$ on
$(w,l)\in \Omega G\times LM$ is simply given by the pointed loop
$t\mapsto w(t+s)w(s)^{-1}$ and the free loop $t\mapsto l(t+s)$.

Consider the {\it twisted} action of $S^1$ on $\Omega G\times LM$ defined
by $s.(w,l)=(s.w,t\mapsto w(s).l(t+s))$ where $s.w$ is the action of $s\in S^1$
on $w\in\Omega G$ given by Proposition~\ref{definition de l'action} i).
With respect to the twisted action on the source and the diagonal action on the target,
$\Phi_{G,M}$ is a morphism of $S^1$-spaces up to homotopy.

The algebra $H_*(\Omega G)\otimes\mathbb{H}_*(LM)$ equipped with the $H_*(S^1)$-module structure
given by the diagonal action is the tensor product of the Batalin-Vilkovisky algebra
$H_*(\Omega G)$ given by Proposition~\ref{lacets pointe d'un groupe est BV}
and of the Batalin-Vilkovisky algebra $\mathbb{H}_*(LM)$ given by
Chas and Sullivan~\cite{Chas-Sullivan:stringtop}. Therefore by Example~\ref{produit tensoriel BV},
it
is a Batalin-Vilkovisky algebra.

Since the isomorphism $H_*(\Phi_{G,m})$ is both a morphism of algebras and a morphism
of $H_*(S^1)$-modules, $H_*(\Omega G)\otimes\mathbb{H}_*(LM)$ equipped with the 
$H_*(S^1)$-module structure given by the twisted action is also a Batalin-Vilkovisky algebra.

Consider the trivial section $\sectiontriviale:M\hookrightarrow LM$ mapping $x\in M$
to the free loop constant on $x$.
It is well-known that $\mathbb{H}_*(\sectiontriviale):\mathbb{H}_*(M)\rightarrow\mathbb{H}_*(LM)$
is a morphism of algebras.
The map $\Omega G\times \sectiontriviale:\Omega G\times M\rightarrow \Omega G\times LM$
is $S^1$-equivariant with respect to the $S^1$-action on $\Omega G\times M$
given by Proposition~\ref{definition de l'action} ii) and to the twisted $S^1$-action
on  $\Omega G\times LM$.
Therefore, since $H_*(\Omega G)\otimes\mathbb{H}_*(\sectiontriviale)$ is injective, $H_*(\Omega G)\otimes \mathbb{H}_*(M)$
is a Batalin-Vilkovisky algebra and
$H_*(\Omega G)\otimes\mathbb{H}_*(\sectiontriviale)$ is a morphism of Batalin-Vilkovisky algebra.

The composite
$$
\Omega G\times M\buildrel{\Omega G\times \sectiontriviale}\over\hookrightarrow \Omega G\times LM
\buildrel{\Phi_{G,M}}\over\rightarrow \Omega G\times LM\buildrel{proj}\over\twoheadrightarrow LM
$$
is $\Theta_{G,M}$. Therefore $H_*(\Theta_{G,M})$ is the composite of the following morphisms of Batalin-Vilkovisky algebras:
\begin{multline*}
H_*(\Omega G)\otimes\mathbb{H}_*(M)\buildrel{H_*(\Omega G)\otimes \mathbb{H}_*(\sectiontriviale)}\over\hookrightarrow H_*(\Omega G)\otimes \mathbb{H}_*(LM)\\
\buildrel{\mathbb{H}_*(\Phi_{G,M})}\over\rightarrow H_*(\Omega G)\otimes \mathbb{H}_*(LM)
\buildrel{\varepsilon_{H_*(\Omega G)}\otimes\mathbb{H}_*(LM)}\over\twoheadrightarrow  \mathbb{H}_*(LM)
\end{multline*}
Note that $\varepsilon_{H_*(\Omega G)}\otimes\mathbb{H}_*(LM)$ is a morphism of Batalin-Vilkovisky algebras
since the augmentation $\varepsilon_{H_*(\Omega G)}:H_*(\Omega G)\twoheadrightarrow{\Bbbk}$
is a morphism of Batalin-Vilkovisky algebras.
So finally, $H_*(\Theta_{G,M})$ is a morphism of Batalin-Vilkovisky algebras.
\end{proof}
\vspace{.3cm}
\begin{definition}~\cite{Hepworth:stringLie}\label{definition homology suspension}
Let $X$ be a pointed space.
Let $\sigma:S^1\times \Omega X\rightarrow X$, $(s,w)\mapsto w(s)$,
denote the evaluation map.
The {\it homology suspension} is the morphism of degree $+1$,
$\sigma_*:H_q(\Omega X)\rightarrow H_{q+1}(X)$ defined
by $\sigma_*(a)=H_*(\sigma)([S^1]\otimes a)$, $a\in H_q(\Omega X)$, $q\geq 0$. 
\end{definition}
According to~\cite[Lemma 7]{Hepworth:stringLie}, this homology suspension coincides with
the usual one studied in~\cite[Chapter VIII]{Whitehead:eltsoht}.
Since this paper was written almost completely before
the preprint of Hepworth appeared, we will never use in this paper this fact
that regretfully, we did not notice.
However, we felt that it was necessary to use his terminology and
we rewrote our paper accordingly.

In~\cite[Theorem 5]{Hepworth:stringLie},
Hepworth computed the Batalin-Vilkovisky algebra on the modulo $2$ free loop space homology on the special
orthogonal group, $\mathbb{H}_*(LSO(n);\mathbb{F}_2)$.
When $n\geq 4$, Lemma 7 of~\cite{Hepworth:stringLie} is required in order to achieve this interesting
computation.
\begin{proposition}\label{operateur BV sur le produit lacets espace}
Let $G$ be a H-group.
Let $X$ be a topological space.
Let $act_X:G\times X\rightarrow X$ be an action up to homotopy of $G$ on $X$.
Suppose that $H_*(\Omega G)$ is torsion free.
Denote by $\BV_{\Omega G}$ (respectively $\BV_{\Omega G\times X}$) the operator given by the action of $H_*(S^1)$
on $H_*(\Omega G)$ (respectively $H_*(\Omega G\times X)$)
given by Proposition~\ref{definition de l'action}.
Then for any $a\in H_*(\Omega G)$, $x\in H_*(X)$,
$$
\BV_{\Omega G\times X}( a\otimes x)
=\left(\BV_{\Omega G} a\right)\otimes x
+\sum (-1)^{\vert a_{(1)}\vert}
a_{(1)}\otimes (\sigma_*(a_{(2)})\cdot x).
$$
Here $\Delta a=\sum a_{(1)}\otimes a_{(2)}$ is the diagonal of
$a\in H_*(\Omega G)$.
\end{proposition}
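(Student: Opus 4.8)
The plan is to compute the operator $\BV_{\Omega G\times X}$ directly from its definition in Proposition~\ref{definition de l'action}ii), i.e.\ from the $S^1$-action $s.(w,x)=(s.w,w(s).x)$ on $\Omega G\times X$. Recall that this action is the composite
$$
S^1\times \Omega G\times X
\xrightarrow{(s,w,x)\mapsto (s,w,s.w,x)}
S^1\times\Omega G\times G\times X
\xrightarrow{1\times 1\times act_X} S^1\times\Omega G\times X,
$$
where implicitly we first evaluate the $S^1$-action on $\Omega G$ of Proposition~\ref{definition de l'action}i) and then remember the value $w(s)\in G$ through the evaluation map $\sigma\colon S^1\times\Omega G\to G$ of Definition~\ref{definition homology suspension}. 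The key structural point is that the action map factors, up to homotopy, as
$$
S^1\times(\Omega G\times X)\xrightarrow{\;(\text{diag}_{S^1})\times 1\;} (S^1\times\Omega G)\times(S^1\times X)\xrightarrow{\;a_{\Omega G}\times \widetilde{\sigma}\;}\Omega G\times X,
$$
where $a_{\Omega G}$ is the $S^1$-action on $\Omega G$ and $\widetilde{\sigma}\colon S^1\times X\to X$ is ``transport along the evaluated path'': more precisely, the relevant map $S^1\times\Omega G\times X\to X$ is $(s,w,x)\mapsto w(s).x$, which is $act_X\circ(\sigma\times 1_X)$. So applying $H_*(-)$ and the Künneth isomorphism (valid since $H_*(\Omega G)$ is torsion free, so $H_*(\Omega G)$ has a coassociative diagonal $\Delta$), the class $[S^1]\otimes a\otimes x$ maps under the diagonal of $S^1$ to $\sum \big([S^1]\otimes a_{(1)}\big)\otimes\big([S^1]\otimes a_{(2)}\otimes x\big)$ up to the sign coming from commuting $[S^1]$ (degree $1$) past $a_{(1)}$, i.e.\ $(-1)^{|a_{(1)}|}$, together with the ``diagonal term'' where $[S^1]$ goes entirely to the first factor, giving $\big([S^1]\otimes a\big)\otimes\big(1\otimes x\big)$. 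The first family of terms is sent by $H_*(a_{\Omega G})\otimes H_*(act_X\circ(\sigma\times 1))$ to $\sum(-1)^{|a_{(1)}|} a_{(1)}\otimes\big(\sigma_*(a_{(2)})\cdot x\big)$, and the diagonal term is sent to $(\BV_{\Omega G}a)\otimes x$ (here $\widetilde\sigma([\text{pt}]\otimes a\otimes x)=a\cdot x=\varepsilon(a)x$, but in fact the only surviving contribution with $[S^1]$ on the $\Omega G$ side is the one whose $X$-part is $1\otimes x\mapsto x$, which pairs with $H_*(a_{\Omega G})([S^1]\otimes a)=\BV_{\Omega G}a$).

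Concretely, the steps I would carry out are: (1) write down the explicit homotopy factorization of the $S^1$-action on $\Omega G\times X$ through $(S^1\times\Omega G)\times(S^1\times X)$ using the diagonal $S^1\to S^1\times S^1$, being careful that in the $H$-group case ``$s.w$'' and ``$w(s)$'' are only defined up to homotopy but coherently so (this is exactly what Proposition~\ref{definition de l'action} provides); (2) apply homology and the Künneth theorem, using torsion-freeness to get the diagonal $\Delta$ on $H_*(\Omega G)$ and to identify $H_*(\Omega G\times X)\cong H_*(\Omega G)\otimes H_*(X)$; (3) apply the Künneth formula for $H_*(S^1)$: $H_*(S^1\times S^1)$ has basis $1\otimes 1$, $[S^1]\otimes 1$, $1\otimes [S^1]$, $[S^1]\otimes[S^1]$, and the image of $[S^1]$ under the diagonal is $[S^1]\otimes 1 + 1\otimes [S^1]$ — it is this formula that produces exactly two families of terms; (4) track the Koszul sign when moving the degree-$1$ class $[S^1]$ (destined for the $\Omega G$ factor of the target of $a_{\Omega G}$) past $a_{(1)}$, yielding $(-1)^{|a_{(1)}|}$; (5) identify the composite $S^1\times\Omega G\times X\xrightarrow{\sigma\times 1}G\times X\xrightarrow{act_X}X$ on homology with $a_{(2)}\mapsto \sigma_*(a_{(2)})\cdot x$, which is precisely Definition~\ref{definition homology suspension} followed by the $H_*(G)$-module structure on $H_*(X)$ induced by $act_X$; (6) observe that the term where $[S^1]$ lands on the trivial factor $1\in S^1$ over $X$ contributes $x\mapsto x$ on the $X$-side and $\BV_{\Omega G}a$ on the $\Omega G$-side, giving $(\BV_{\Omega G}a)\otimes x$. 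Assembling (3)–(6) gives the stated formula.

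The part I expect to be the main obstacle is making step (1) rigorous in the $H$-group (rather than strict monoid/group) setting: the maps $s\mapsto s.w$ and $s\mapsto w(s)$ are individually well-defined (the latter literally, the former only up to homotopy via Proposition~\ref{definition de l'action}i)), but the claim that the full action map on $\Omega G\times X$ factors \emph{up to homotopy} through $(S^1\times\Omega G)\times(S^1\times X)$ via the diagonal requires knowing that one can choose the homotopy for the $S^1$-action on $\Omega G$ compatibly with the honest evaluation map $\sigma$. In the strict case (Remark~\ref{theta homeo dans le cas des groupes de Lie}, $G$ a group) this is transparent because $s.(w,x)=(t\mapsto w(t+s)w(s)^{-1},\ w(s).x)$ is given by an explicit formula and the factorization is on the nose; one then only needs that the diagrams defining $r$ and the $S^1$-action in the appendix commute up to homotopy coherently enough, which is handled by the split-fibration material of Section~8. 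An alternative, perhaps cleaner, route avoiding this subtlety: prove the formula first for $G$ a topological group by the explicit-formula computation above, then deduce the $H$-group case from naturality (Proposition~\ref{fonctorialite de l'action}) applied to the weak equivalence of $H$-spaces $h\colon G\xrightarrow{\simeq}\Omega BG$, since $\Omega BG$ is (homotopy equivalent to) a topological group — everything in the statement (the diagonal $\Delta$, $\BV_{\Omega G}$, $\sigma_*$, the module structure) being natural with respect to homomorphisms of $H$-groups and the relevant Künneth isomorphisms. I would present the group case in detail and then invoke this naturality argument to conclude in general.
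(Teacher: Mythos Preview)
Your approach is essentially the paper's: both compute $\BV_{\Omega G\times X}$ directly by factoring the action of Proposition~\ref{definition de l'action}ii) through the diagonal, then through $act_{\Omega G}\otimes H_*(\sigma)\otimes 1$ and $1\otimes act_X$. Two small corrections. First, your displayed factorization through $(S^1\times\Omega G)\times(S^1\times X)$ is wrong as written: the $X$-component $w(s).x$ needs its own copy of $w$, so the diagonal must be taken on \emph{all} of $S^1\times\Omega G$, landing in $(S^1\times\Omega G)\times(S^1\times\Omega G)\times X$ --- this is why the coproduct $\Delta a=\sum a_{(1)}\otimes a_{(2)}$ enters, and indeed the paper writes the composite exactly this way. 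Second, the term with $[S^1]$ on the first factor is not immediately $(\BV_{\Omega G}a)\otimes x$: it is $\sum(\BV_{\Omega G}a_{(1)})\otimes\bigl(H_*(\sigma)(1\otimes a_{(2)})\cdot x\bigr)$, and you need the observation that $\sigma|_{\{0\}\times\Omega G}$ factors through the basepoint, so $H_*(\sigma)(1\otimes b)=\varepsilon(b)1$, together with the counit identity $(Id\otimes\varepsilon)\circ\Delta=Id$, to collapse it. Finally, your concern about the $H$-group case is misplaced: the factorization above \emph{is} the definition in Proposition~\ref{definition de l'action}ii), already stated for $H$-groups, and since the entire computation takes place in homology, ``up to homotopy'' suffices throughout --- the paper makes no case distinction and your proposed detour through $\Omega BG$ is unnecessary.
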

\begin{proof}
By Proposition~\ref{definition de l'action} ii),
the action of $H_*(S^1)$ on $H_*(\Omega G)\otimes H_*(X)$
is the composite
$$
\xymatrix{
H_*(S^1)\otimes H_*(\Omega G)\otimes H_*(X)
\ar[d]^{\Delta_{H_*(S^1)\otimes H_*(\Omega G)}\otimes H_*(X)}\\
H_*(S^1)\otimes H_*(\Omega G)\otimes H_*(S^1)\otimes H_*(\Omega G)\otimes H_*(X)
\ar[d]^{act_{H_*(\Omega G)}\otimes H_*(\sigma)\otimes H_*(X)}\\
H_*(\Omega G)\otimes H_*(G)\otimes H_*(X)
\ar[d]^{H_*(\Omega G)\otimes act_{H_*(X)}}\\
H_*(\Omega G)\otimes H_*(X)}
$$
\noindent where $\Delta_{H_*(S^1)\otimes H_*(\Omega G)}$ is the diagonal
of $H_*(S^1)\otimes H_*(\Omega G)$ and
$act_{H_*(\Omega G)}:H_*(S^1)\otimes H_*(\Omega G)
\rightarrow H_*(\Omega G)$ is the action of $H_*(S^1)$ on
$H_*(\Omega G)$ given by Proposition~\ref{definition de l'action} i). 

Since \begin{multline*}\Delta_{H_*(S^1)\otimes H_*(\Omega G)}([S^1]\otimes a)= \\
\sum [S^1]\otimes a_{(1)} \otimes 1 \otimes a_{(2)}
+ \sum (-1)^{\vert a_{(1)}\vert}1\otimes a_{(1)} \otimes [S^1] \otimes a_{(2)} ,
\end{multline*}
\begin{multline*}
\BV_{\Omega G\times X}( a\otimes x)
=\sum \left(\BV_{\Omega G} a_{(1)}\right)\otimes H_*(\sigma)(1 \otimes a_{(2)})\cdot x\\
+\sum (-1)^{\vert a_{(1)}\vert}
a_{(1)}\otimes H_*(\sigma)([S^1] \otimes a_{(2)})\cdot x.
\end{multline*}
Let $\varepsilon_{H_*(\Omega G)}:
H_*(\Omega G)\twoheadrightarrow {\Bbbk}$ be the augmentation of the
Hopf algebra $H_*(\Omega G)$.
Since the restriction of $\sigma$ to $\{0\}\times\Omega G$
is the composite $\Omega G\rightarrow \{e\}\rightarrow G$,
$$H_*(\sigma)(1\otimes a)=\varepsilon_{H_*(\Omega G)}(a)1.$$
In a Hopf algebra,  $(Id\otimes\varepsilon)\circ\Delta=Id$.
Therefore,
$$\sum \left(\BV_{\Omega G} a_{(1)}\right)\otimes H_*(\sigma)(1 \otimes a_{(2)})\cdot x=\sum \left(\BV_{\Omega G} a_{(1)}\right)\otimes \varepsilon_{H_*(\Omega G)}( a_{(2)})x= \left(\BV_{\Omega G} a\right)\otimes x.
$$
On the other hand, by Definition~\ref{definition homology suspension}
$$\sum (-1)^{\vert a_{(1)}\vert}
a_{(1)}\otimes H_*(\sigma)([S^1] \otimes a_{(2)})\cdot x=
\sum (-1)^{\vert a_{(1)}\vert}
a_{(1)}\otimes (\sigma_*(a_{(2)})\cdot x).
$$
\end{proof}
\begin{lemma}\label{lacets pointe d'un groupe de Lie}
Let $G$ be a path-connected Lie group.
Then $H_*(\Omega G)$ is ${\Bbbk}$-free and concentrated in even degree.
So for any $a\in H_*(\Omega G)$, $\BV_{\Omega G} a=0$.
\end{lemma}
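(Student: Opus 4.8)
The plan is to reduce the statement to the case of a compact connected Lie group and then to invoke the classical theorem of Bott on the homology of its loop space.

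First I would recall that a path-connected Lie group $G$ admits a maximal compact subgroup $K$, and that the inclusion $K\hookrightarrow G$ is a homotopy equivalence; indeed, by the Cartan--Iwasawa--Malcev theorem $G$ is diffeomorphic to $K\times\mathbb{R}^n$. Therefore $\Omega G$ is homotopy equivalent to $\Omega K$, and it suffices to prove the statement when $G$ itself is a compact connected Lie group.

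Next I would appeal to Bott's theorem on the space of loops on a compact connected Lie group: $\Omega G$ has the homotopy type of a CW-complex all of whose cells are of even dimension. Consequently $H_*(\Omega G;\mathbb{Z})$ is free abelian and concentrated in even degrees (the cellular chain complex has all its groups free abelian and supported in even degrees, so all differentials vanish). By the universal coefficient theorem, $H_*(\Omega G;{\Bbbk})$ is then ${\Bbbk}$-free and concentrated in even degrees, since there are neither $\mathrm{Tor}$ contributions nor odd-degree classes. (Alternatively, over $\mathbb{Q}$ this also follows from the Milnor--Moore theorem, $H_*(\Omega G;\mathbb{Q})$ being the free graded-commutative algebra on $\pi_{*+1}(G)\otimes\mathbb{Q}$, which sits in odd degrees; but Bott's theorem is what gives the torsion-free integral statement needed here.)

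Finally, the operator $\BV_{\Omega G}$ has degree $+1$. If $a\in H_*(\Omega G)$ is homogeneous of (necessarily even) degree $q$, then $\BV_{\Omega G}a\in H_{q+1}(\Omega G)=0$, since $q+1$ is odd; by ${\Bbbk}$-linearity $\BV_{\Omega G}a=0$ for every $a$. The only step that is not purely formal is the appeal to Bott's theorem, which is therefore the crux of the argument; the reduction to the compact case and the final degree count are routine.
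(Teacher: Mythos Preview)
Your proof is correct. Both you and the paper reduce the statement to Bott's theorem, but via different intermediaries: you pass to a maximal compact subgroup $K$ using Cartan--Iwasawa--Malcev and then invoke Bott for the compact connected group $K$, whereas the paper passes to the universal cover $\tilde{G}$, identifies $H_*(\Omega G)\cong {\Bbbk}[\pi_1(G)]\otimes H_*(\Omega\tilde{G})$ via the decomposition of $\Omega G$ into its path components, and then invokes Bott for the simply-connected group $\tilde{G}$. Your route is arguably more direct for the lemma as stated; the paper's route has the advantage that the explicit Hopf-algebra splitting ${\Bbbk}[\pi_1(G)]\otimes H_*(\Omega\tilde{G})$ is reused later in the paper (in the rational computation of $\mathbb{H}_*(LG;\mathbb{Q})$). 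One small caveat: the version of Bott's theorem in Milnor's \emph{Morse Theory} (Theorem~21.7) is stated for \emph{simply-connected} compact groups, so strictly speaking your application to a merely connected compact $K$ still requires either a further passage to $\tilde{K}$ (which brings you back to the paper's argument) or the observation that the Morse-theoretic index computation for geodesics in a bi-invariant metric goes through without the simply-connected hypothesis.
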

\begin{proof}
Let $\Omega_0 G$ be the path-component of the constant loop $\hat{e}$ in
$\Omega G$. Since ($\Omega G,\hat{e})$ is a $H$-group,
the composite of the inclusion map and of the multiplication
$$
{\Bbbk}[\pi_0(\Omega G)]\otimes H_*(\Omega_0 G)
\hookrightarrow H_0(\Omega G)\otimes H_*(\Omega G)\rightarrow H_*(\Omega G)
$$
is an isomorphism (of algebras since $H_*(\Omega G)$ is commutative).
Let $\tilde{G}$ be the universal cover of $G$.
Then we have an isomorphism of algebras
$H_*(\Omega\tilde{G})\buildrel{\cong}\over\rightarrow H_*(\Omega_0 G)$.
Since $\tilde{G}$ is a simply-connected Lie group, by a result of Bott~\cite[Theorem 21.7 and Remark 2]{Milnor:Morsetheory},
$H_*(\Omega\tilde{G})$ is  ${\Bbbk}$-free and concentrated in even degree.
Therefore $H_*(\Omega G)\cong {\Bbbk}[\pi_1(G)]\otimes H_*(\Omega\tilde{G})$
is  also ${\Bbbk}$-free and concentrated in even degree.
Therefore $\BV_{\Omega G}:H_*(\Omega G)\rightarrow H_{*+1}(\Omega G)$
is trivial.
\end{proof}
From Remark~\ref{theta homeo dans le cas des groupes de Lie},
Proposition~\ref{operateur BV sur le produit lacets espace}
and Lemma~\ref{lacets pointe d'un groupe de Lie},
since here $(-1)^{\vert a_{(1)}\vert}$ is equal to $1$, we immediately obtain 
 the following Corollary due to Hepworth.
\vspace{.3cm}
\begin{corollary}~\cite[Theorem 1]{Hepworth:stringLie}\label{string homology Lie group}
Let $G$ be a path-connected compact Lie group.
Then as Batalin-Vilkovisky algebra,
$$
\mathbb{H}_*(LG)\cong H_*(\Omega G)\otimes \mathbb{H}_*(G)
$$
and $\BV (a\otimes x)=\sum 
a_{(1)}\otimes (\sigma_*(a_{(2)})\cdot x)$
for $a\in H_*(\Omega G)$, $x\in \mathbb{H}_*(G)$.
\end{corollary}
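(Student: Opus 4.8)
The plan is to obtain this statement by feeding the case $M=G$, with $G$ acting on itself by left translation, into the machinery already built in Section~4. A path-connected compact Lie group is a closed smooth manifold which, being a Lie group, is parallelizable and hence orientable; it is moreover a topological monoid possessing a (strict) inverse. So the only hypothesis of Theorem~\ref{Structure BV sur le produit tensoriel} still to be checked is that $H_*(\Omega G)$ is torsion free, and this — together with a key simplification — is exactly the content of Lemma~\ref{lacets pointe d'un groupe de Lie}: $H_*(\Omega G)$ is ${\Bbbk}$-free, so the K\"unneth map is an isomorphism and both Theorem~\ref{Structure BV sur le produit tensoriel} and Proposition~\ref{operateur BV sur le produit lacets espace} apply, and it is concentrated in even degree, so that $\BV_{\Omega G}=0$ and every sign $(-1)^{\vert a_{(1)}\vert}$ appearing below equals $1$.

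First I would invoke Remark~\ref{theta homeo dans le cas des groupes de Lie}: the map $\Theta_{G,G}\colon\Omega G\times G\to LG$ is an $S^1$-equivariant homeomorphism, for the $S^1$-action of Proposition~\ref{definition de l'action} ii) on the source and loop rotation on the target. Hence $\mathbb{H}_*(\Theta_{G,G})$ is an isomorphism of Batalin-Vilkovisky algebras, and combined with Corollary~\ref{loop algebra des groupes de Lie} this identifies $\mathbb{H}_*(LG)$ with $H_*(\Omega G)\otimes\mathbb{H}_*(G)$ equipped with the operator $B_{\Omega G\times G}$ of Theorem~\ref{Structure BV sur le produit tensoriel}. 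It then only remains to evaluate $B_{\Omega G\times G}$ using Proposition~\ref{operateur BV sur le produit lacets espace} with $X=G$ and $act_G\colon G\times G\to G$ the left translation: for $a\in H_*(\Omega G)$ and $x\in\mathbb{H}_*(G)$,
$$
B_{\Omega G\times G}(a\otimes x)=(\BV_{\Omega G}a)\otimes x+\sum(-1)^{\vert a_{(1)}\vert}a_{(1)}\otimes(\sigma_*(a_{(2)})\cdot x).
$$
By Lemma~\ref{lacets pointe d'un groupe de Lie} the first summand vanishes and every sign is $+1$, which gives precisely $\BV(a\otimes x)=\sum a_{(1)}\otimes(\sigma_*(a_{(2)})\cdot x)$.

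Since every step is a direct appeal to an already-established result, I do not expect a genuine obstacle; the one point that requires care is the bookkeeping of conventions, namely that the module action ``$\cdot$'' which emerges through $act_{H_*(G)}$ in Proposition~\ref{operateur BV sur le produit lacets espace} — a priori the Pontryagin product of $H_*(G)$ — coincides, after the shift $\mathbb{H}_*(G)=H_{*+d}(G)$ and under the identification of Corollary~\ref{loop algebra des groupes de Lie}, with the product on $\mathbb{H}_*(G)$ intended in Hepworth's statement. This is the classical fact that, for a compact Lie group, Poincar\'e duality intertwines the Pontryagin and intersection products; and the $S^1$-equivariance of $\Theta_{G,G}$ used above is itself guaranteed by the comparison of the twisted and diagonal circle actions carried out in the proof of Theorem~\ref{Structure BV sur le produit tensoriel}.
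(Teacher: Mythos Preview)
Your proof is correct and follows exactly the paper's own argument: the paper derives the corollary in one sentence from Remark~\ref{theta homeo dans le cas des groupes de Lie}, Proposition~\ref{operateur BV sur le produit lacets espace}, and Lemma~\ref{lacets pointe d'un groupe de Lie}, noting that $(-1)^{\vert a_{(1)}\vert}=1$. Your additional remarks on checking the hypotheses and on the compatibility of the Pontryagin and intersection products are sound elaborations but not a departure from the paper's route.
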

\begin{corollary}\label{homologie de variete sous BV algebre}
Let $G$ be a $H$-group acting up to homotopy on
a topological space $X$.
Assume that $H_*(\Omega G)$ is torsion free.
Then $H_*(X)$ is a trivial sub $H_*(S^1)$-module of the
$H_*(S^1)$-module $H_*(\Omega G)\otimes H_*(X)$
given by Proposition~\ref{definition de l'action} ii). 
\end{corollary}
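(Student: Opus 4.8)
The plan is to read the statement off directly from the formula for $\BV_{\Omega G\times X}$ established in Proposition~\ref{operateur BV sur le produit lacets espace}, evaluated on the unit of the Hopf algebra $H_*(\Omega G)$. Since $H_*(\Omega G)$ is torsion free, the K\"unneth morphism is an isomorphism $H_*(\Omega G)\otimes H_*(X)\cong H_*(\Omega G\times X)$, and the copy of $H_*(X)$ in question is $1\otimes H_*(X)$, where $1\in H_0(\Omega G)$ is represented by the constant loop $\hat e$. So it is enough to check that $\BV_{\Omega G\times X}(1\otimes x)=0$ for every $x\in H_*(X)$: this shows simultaneously that $1\otimes H_*(X)$ is closed under the $H_*(S^1)$-action and that this action is trivial on it.

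The verification is immediate. Since $\Delta(1)=1\otimes 1$, Proposition~\ref{operateur BV sur le produit lacets espace} gives
$$\BV_{\Omega G\times X}(1\otimes x)=\bigl(\BV_{\Omega G}1\bigr)\otimes x+1\otimes\bigl(\sigma_*(1)\cdot x\bigr).$$
Now $\BV_{\Omega G}1=0$, because the constant loop $\hat e$ is (up to homotopy) a fixed point of the $S^1$-action on $\Omega G$ of Proposition~\ref{definition de l'action} i): that action is built out of the trivial section, and $r\circ\sectiontriviale$ is homotopically trivial. And $\sigma_*(1)=0$, because $1\in H_0(\Omega G)$ is represented by the single point $\hat e$, so $[S^1]\otimes 1$ is carried by $\sigma\colon S^1\times\Omega G\to G$ into the point $e$; this is the same observation as the identity $H_*(\sigma)(1\otimes a)=\varepsilon_{H_*(\Omega G)}(a)\,1$ used to prove Proposition~\ref{operateur BV sur le produit lacets espace}. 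Hence $\BV_{\Omega G\times X}(1\otimes x)=0$.

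Equivalently, and perhaps more transparently, one can note that the inclusion $\{\hat e\}\times X\hookrightarrow\Omega G\times X$ is a morphism of $S^1$-spaces up to homotopy when $X$ carries the trivial action: the action of Proposition~\ref{definition de l'action} ii) sends $s.(\hat e,x)$ to $(s.\hat e,\hat e(s).x)$, and $s.\hat e\simeq\hat e$ as above while $\hat e(s)=e$ acts as the identity on $X$. Taking homology and using K\"unneth then presents $H_*(X)\cong 1\otimes H_*(X)$ as the image of a trivial $H_*(S^1)$-module, hence as a trivial sub $H_*(S^1)$-module. There is no real obstacle in either route; the only facts needed are $\BV_{\Omega G}1=0$ and $\sigma_*(1)=0$, both immediate from basepoint preservation, plus the routine identification, under K\"unneth, of $1\otimes H_*(X)$ with the image of $H_*(\{\hat e\}\times X)$.
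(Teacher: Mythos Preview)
Your argument is correct and matches the paper's own treatment almost verbatim: the paper gives precisely these two proofs, first the direct observation that $\{\hat e\}\times X\hookrightarrow\Omega G\times X$ is $S^1$-equivariant up to homotopy, and second the computation $\BV_{\Omega G\times X}(1\otimes x)=(\BV_{\Omega G}1)\otimes x+1\otimes(\sigma_*(1)\cdot x)=0$ via Proposition~\ref{operateur BV sur le produit lacets espace}, using $\BV_{\Omega G}1=0$ and $\sigma_*(1)=0$.
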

Note that the composite
$X\cong \{\hat{e}\}\times X\rightarrow
\Omega G\times X
\buildrel{\Theta_{G,X}}\over\rightarrow LX$
is homotopic to $
\sectiontriviale:X\hookrightarrow LX
$
the trivial section mapping $x\in X$ to $\hat{x}$ the free loop constant on $x$.
Through $\sectiontriviale$, $X$ is a trivial sub $S^1$-space of $LX$.
\begin{proof}[First proof of Corollary~\ref{homologie de variete sous BV algebre}  without using Proposition~\ref{operateur BV sur le produit lacets espace}]
By definition, in Proposition~\ref{definition de l'action} i),
the action of any $t\in S^1$ on the constant loop $\hat{e}\in \Omega (G,e)$
is $\hat{e}$, since $r:LG\rightarrow \Omega G$ is a pointed map.
Therefore by definition, in Proposition~\ref{definition de l'action} ii),
the action of $t\in S^1$ on $(\hat{e},x)\in\Omega G\times X$
is $t.(\hat{e},x)=(t.\hat{e},\hat{e}(s).x)= (\hat{e},e.x)$.
Therefore $X\cong \{\hat{e}\}\times X$ is up to homotopy
a trivial sub $S^1$-space of $\Omega G\times X$.
And in homology, if $H_*(X)$ is considered as a trivial $H_*(S^1)$-module,
the morphism  $H_*(X)\rightarrow H_*(\Omega G)\otimes H_*(X)$,
$x\mapsto 1\otimes x$, is a morphism  of $H_*(S^1)$-modules.
\end{proof}
\begin{proof}[Second proof  of Corollary~\ref{homologie de variete sous BV algebre} using Proposition~\ref{operateur BV sur le produit lacets espace}]
Again, we observe that the constant loop $\hat{e}\in\Omega G$ on the neutral element is  a fixed point under the $S^1$-action of $\Omega G$. Therefore 
$\BV_{\Omega G}(1)=0$.

Let $\varepsilon_{H_*(S^1)}:H_*(S^1)\twoheadrightarrow {\Bbbk}$ be the augmentation. Since the restriction of $\sigma$ to $S^1\times \{\hat{e}\}$
is the composite $S^1\rightarrow  \{e\}\rightarrow G$,
\begin{equation}\label{suspension element neutre}
\sigma_*(1):=H_*(\sigma)([S^1]\otimes 1)=\varepsilon_{H_*(S^1)}([S^1])1=0.
\end{equation}
By Proposition~\ref{operateur BV sur le produit lacets espace},
$$
\BV_{\Omega G\times X}( 1\otimes x)
=\left(\BV_{\Omega G} 1\right)\otimes x
+1\otimes (\sigma_*(1)\cdot x)=0+0.
$$
\end{proof}
\begin{corollary}\label{comparaison delta} 
Let $G$ be a $H$-group acting up to homotopy on
a smooth compact oriented manifold $M$.
Assume that $H_*(\Omega G)$ is torsion free.
Then $H_*(\Omega G)$ is a sub $H_*(S^1)$-module of the
$H_*(S^1)$-module $H_*(\Omega G)\otimes \mathbb{H}_*(M)$
given by Proposition~\ref{definition de l'action} ii). 
\end{corollary}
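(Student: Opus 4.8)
The plan is to realize $H_*(\Omega G)$ inside $H_*(\Omega G)\otimes\mathbb{H}_*(M)$ through the map $\iota$ sending $a$ to $a\otimes 1$, where $1=[M]\in\mathbb{H}_0(M)=H_d(M)$ is the unit of the intersection algebra $\mathbb{H}_*(M)$, and then to prove that $\iota$ is a morphism of $H_*(S^1)$-modules. First I would record the cheap facts: $\iota$ is injective, indeed a split injection of graded ${\Bbbk}$-modules, because ${\Bbbk}\cdot 1$ is a free ${\Bbbk}$-module of rank one which is a direct summand of the graded module $\mathbb{H}_*(M)$, so that $H_*(\Omega G)\otimes{\Bbbk}\cdot 1$ is a direct summand of $H_*(\Omega G)\otimes\mathbb{H}_*(M)$; and $\iota$ is a morphism of graded algebras, since $1$ is the (degree $0$) unit of $\mathbb{H}_*(M)$. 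So everything reduces to checking that $\iota$ commutes with the operators, i.e.~that $\BV_{\Omega G\times M}(a\otimes 1)=(\BV_{\Omega G}a)\otimes 1$ for all $a\in H_*(\Omega G)$.

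For this I would plug $x=1$ into the explicit formula of Proposition~\ref{operateur BV sur le produit lacets espace} (applied to the action of $G$ on $M$), which gives
$$\BV_{\Omega G\times M}(a\otimes 1)=(\BV_{\Omega G}a)\otimes 1+\sum(-1)^{\vert a_{(1)}\vert}a_{(1)}\otimes(\sigma_*(a_{(2)})\cdot 1).$$
The crux — and really the only point of the argument — is that each correction term $\sigma_*(a_{(2)})\cdot 1$ vanishes for dimension reasons: by Definition~\ref{definition homology suspension} the homology suspension raises degree, so $\sigma_*(a_{(2)})$ lies in $H_{\vert a_{(2)}\vert+1}(G)$, hence in \emph{positive} degree, and the $G$-action $H_p(G)\otimes H_d(M)\to H_{p+d}(M)$ is zero for $p\geq 1$ simply because $M$ is a closed $d$-manifold, so $H_{>d}(M)=0$. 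This works uniformly even though $\Omega G$ may be disconnected, since $\sigma_*$ is defined on all of $H_*(\Omega G)$ and always lands in degree $\geq 1$. Hence $\BV_{\Omega G\times M}\circ\iota=\iota\circ\BV_{\Omega G}$, so $\iota$ is an isomorphism of $H_*(S^1)$-modules onto its image, which is therefore the desired sub $H_*(S^1)$-module; and since $\iota$ is moreover a morphism of algebras it is in fact a sub Batalin-Vilkovisky algebra, consistent with the claim made in the introduction.

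I do not expect a serious obstacle: the statement is essentially a corollary of Proposition~\ref{operateur BV sur le produit lacets espace} together with the degree count above. The one thing worth being careful about is that Proposition~\ref{operateur BV sur le produit lacets espace} is phrased for ordinary homology $H_*(X)$ of an arbitrary space, while here we need the degree-shifted $\mathbb{H}_*(M)$; but the $H_*(S^1)$-action on $H_*(\Omega G)\otimes\mathbb{H}_*(M)$ in Theorem~\ref{Structure BV sur le produit tensoriel} is by definition the one induced by Proposition~\ref{definition de l'action} ii) on the space $\Omega G\times M$, so the formula transports verbatim (exactly as it is already used in Corollary~\ref{string homology Lie group} with $\mathbb{H}_*(G)$). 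It is also worth remarking why there is no direct space-level proof mimicking the first proof of Corollary~\ref{homologie de variete sous BV algebre}: the naive map $\Omega G\to\Omega G\times M$, $w\mapsto(w,m_0)$, is $S^1$-equivariant only if $m_0$ is fixed by the whole $G$-action, which need not exist — which is precisely why one routes the argument through Proposition~\ref{operateur BV sur le produit lacets espace}.
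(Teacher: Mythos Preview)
Your argument is correct and matches the paper's first proof essentially verbatim: embed via $a\mapsto a\otimes[M]$, apply Proposition~\ref{operateur BV sur le produit lacets espace}, and kill the correction terms because $\sigma_*(a_{(2)})$ has positive degree and hence annihilates the top class $[M]$. The paper also records a second proof via integration along the fiber of the $S^1$-equivariant projection $\Omega G\times M\to\Omega G$, whose shriek map $proj_!=Id\otimes[M]$ is automatically $H_*(S^1)$-linear; this is the ``space-level'' substitute you were looking for, circumventing the missing fixed point by going in the wrong-way direction.
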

\begin{proof}[First proof]

For any $a\in H_*(\Omega G)$,
since $\sigma_*(a_{(2)})$ has positive degree, its action on the fundamental class $[M]$ is null.
Therefore 
$ \sum (-1)^{\vert a_{(1)}\vert}   a_{(1)}\otimes \sigma_*(a_{(2)})\cdot [M]
=0 $. So by Proposition~\ref{operateur BV sur le produit lacets espace},
$
\BV_{\Omega G\times M}( a\otimes [M])
=\left(\BV_{\Omega G} a\right)\otimes [M]
+0$.
Therefore,we have proved
$$Id\otimes [M]:H_*(\Omega G)\rightarrow 
H_*(\Omega G)\otimes\mathbb{H}_*(M)$$
is a morphism of $H_*(S^1)$-modules. 
\end{proof}
\begin{proof}[Second proof using shriek maps]
The trivial fibration $proj:\Omega G\times M\rightarrow\Omega G$ is $S^1$-equivariant.
Therefore by~\cite[Section 2.3 Borel Construction]{Chataur-Menichi:stringclass}, the integration along
the fiber of $proj$, $proj_!:H_*(\Omega G)\rightarrow H_*(\Omega G)\otimes\mathbb{H}_*(LM)$
is $H_*(S^1)$-linear. But $proj_!=Id\otimes [M]$.
\end{proof}
\begin{theorem}\label{morphism de BV algebres des lacets doubles vers lacets libres}
Assume the hypothesis of Theorem~\ref{Structure BV sur le produit tensoriel}.
Then the composite
$$H_*(\Omega^2 BG)\cong H_*(\Omega G)
\buildrel{Id\otimes [M]}
\over\rightarrow 
H_*(\Omega G)\otimes\mathbb{H}_*(M)
\buildrel{\mathbb{H}_*(\Theta_{G,M})}\over\rightarrow \mathbb{H}_*(LM)$$
is a morphism of Batalin-Vilkovisky algebras.
\end{theorem}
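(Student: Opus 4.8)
The plan is to recognize the stated composite as a concatenation of three maps, each of which has already been shown (earlier in the paper) to be a morphism of Batalin--Vilkovisky algebras; the theorem is then immediate. Recall that, with the definition of Batalin--Vilkovisky algebra used here, a morphism of Batalin--Vilkovisky algebras is precisely a morphism of commutative graded algebras that is moreover a morphism of $H_*(S^1)$-modules for the two $B$-operators (the bracket being determined by the product and the operator). So for each of the three maps I would check exactly these two properties, most of which is already recorded.

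First, by Proposition~\ref{lacets pointe d'un groupe est BV}, the canonical isomorphism $H_*(\Omega^2 BG)\cong H_*(\Omega G)$, induced by $\Omega h$ for a weak equivalence $h:G\xrightarrow{\simeq}\Omega BG$ of $H$-spaces, is an isomorphism of Batalin--Vilkovisky algebras. Second, I would verify that $Id\otimes[M]:H_*(\Omega G)\rightarrow H_*(\Omega G)\otimes\mathbb{H}_*(M)$ is a morphism of Batalin--Vilkovisky algebras. It is a morphism of graded algebras: the fundamental class $[M]$ is the unit of the (intersection) algebra $\mathbb{H}_*(M)$, so $Id\otimes[M]$ is simply $H_*(\Omega G)\otimes u_{\mathbb{H}_*(M)}$ where $u_{\mathbb{H}_*(M)}:{\Bbbk}\to\mathbb{H}_*(M)$ is the unit. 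And it is a morphism of $H_*(S^1)$-modules by Corollary~\ref{comparaison delta}. Hence $Id\otimes[M]$ is a morphism of Batalin--Vilkovisky algebras. Third, Theorem~\ref{Structure BV sur le produit tensoriel} asserts that $\mathbb{H}_*(\Theta_{G,M}):H_*(\Omega G)\otimes\mathbb{H}_*(M)\rightarrow \mathbb{H}_*(LM)$ is a morphism of Batalin--Vilkovisky algebras.

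Composing these three morphisms of Batalin--Vilkovisky algebras gives the result. There is essentially no obstacle: the only mildly substantive point is the middle step, and even there the $H_*(S^1)$-linearity is exactly the content of Corollary~\ref{comparaison delta}, while the algebra-map property reduces to the standard fact that the constant-loop section $\sectiontriviale:M\hookrightarrow LM$ carries the intersection product on $\mathbb{H}_*(M)$ (with unit $[M]$) to the Chas--Sullivan product on $\mathbb{H}_*(LM)$. So the proof is short and formal, just the assembly of already-established pieces.
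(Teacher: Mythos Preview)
Your proof is correct and follows the same three-step decomposition as the paper. The one substantive difference is in the third step: you invoke Theorem~\ref{Structure BV sur le produit tensoriel} to conclude that $\mathbb{H}_*(\Theta_{G,M})$ is a morphism of Batalin--Vilkovisky algebras, whereas the paper deliberately gives a proof \emph{independent} of that theorem, instead checking the two ingredients separately---the algebra-map property via Theorem~\ref{Felix et Thomas} (Felix--Thomas) and the $H_*(S^1)$-linearity via Proposition~\ref{definition de l'action}~ii). The paper explicitly remarks that your route works (``this theorem can be deduced using Theorem~\ref{Structure BV sur le produit tensoriel}''), but prefers the more elementary argument so that Theorem~\ref{morphism de BV algebres des lacets doubles vers lacets libres} does not logically depend on the harder result. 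Your approach is shorter; the paper's buys independence.

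One small quibble: your closing remark that ``the algebra-map property reduces to the standard fact that the constant-loop section $\sectiontriviale$ carries the intersection product to the Chas--Sullivan product'' is not relevant to the middle map $Id\otimes[M]$. That map is an algebra morphism simply because $[M]$ is the unit of $\mathbb{H}_*(M)$, as you correctly say earlier; the section $\sectiontriviale$ plays no role at that step.
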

As pointed in the introduction of this paper, this theorem can be deduce using
Theorem~\ref{Structure BV sur le produit tensoriel}.
But we prefer to give an independent proof.
\begin{proof}[Proof without using~Theorem~\ref{Structure BV sur le produit tensoriel}]
By Proposition~\ref{lacets pointe d'un groupe est BV}, the obvious isomorphism
of algebras between $H_*(\Omega^2 BG)$ and $H_*(\Omega G)$ is an isomorphism of Batalin-Vilkovisky algebra. By Corollary~\ref{comparaison delta}, the inclusion of algebras
$$Id\otimes [M]:H_*(\Omega G)\rightarrow 
H_*(\Omega G)\otimes\mathbb{H}_*(M)$$
is also a morphism of $H_*(S^1)$-modules.
By Theorem~\ref{Felix et Thomas}, $\mathbb{H}_*(\Theta_{G,M})$ is a morphism
of algebras. By Proposition~\ref{definition de l'action} ii),
$\mathbb{H}_*(\Theta_{G,M})$ is also a morphism of $H_*(S^1)$-modules.
\end{proof}
From Theorem~\ref{morphism de BV algebres des lacets doubles vers lacets libres}
and Lemma~\ref{lacets pointe d'un groupe de Lie}, we obtain:
\begin{corollary}
Let $G$ be a path-connected compact Lie group. Then the algebra
$H_*(\Omega^2 BG)$ equipped with the operator $\BV=0$ is a sub Batalin-Vilkovisky algebra of
$\mathbb{H}_*(LG)$.
\end{corollary}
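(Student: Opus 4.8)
The plan is to assemble the statement from the three facts just established, treating it as essentially a corollary of Theorem~\ref{morphism de BV algebres des lacets doubles vers lacets libres} specialized to $G=M$ a Lie group. First I would invoke Theorem~\ref{morphism de BV algebres des lacets doubles vers lacets libres}: since $G$ is a path-connected compact Lie group, it is a topological monoid with a homotopy inverse, and by Lemma~\ref{lacets pointe d'un groupe de Lie}, $H_*(\Omega G)$ is ${\Bbbk}$-free, hence torsion free, so the hypotheses of Theorem~\ref{Structure BV sur le produit tensoriel} are satisfied. Thus the composite
$$H_*(\Omega^2 BG)\cong H_*(\Omega G)\buildrel{Id\otimes[G]}\over\rightarrow H_*(\Omega G)\otimes\mathbb{H}_*(G)\buildrel{\mathbb{H}_*(\Theta_{G,G})}\over\rightarrow\mathbb{H}_*(LG)$$
is a morphism of Batalin-Vilkovisky algebras.

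Next I would check that this composite is injective and that it carries the trivial $\BV$ operator. By Lemma~\ref{lacets pointe d'un groupe de Lie}, $\BV_{\Omega G}=0$ on $H_*(\Omega G)$, so the source $H_*(\Omega^2 BG)$ carries the zero operator. For injectivity: $Id\otimes[G]$ is split injective (it is the inclusion of a tensor factor, using that $[G]\in\mathbb{H}_*(G)$ is a generator and $H_*(\Omega G)$ is ${\Bbbk}$-free so the Künneth map is an isomorphism), and $\mathbb{H}_*(\Theta_{G,G})$ is an isomorphism because $\Theta_{G,G}:\Omega G\times G\rightarrow LG$ is a homeomorphism (Corollary~\ref{loop algebra des groupes de Lie} and the discussion preceding Theorem~\ref{comparaison action du cercle sur lacets libres et lacets doubles}). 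Hence the composite is an injective morphism of Batalin-Vilkovisky algebras, so its image is a sub Batalin-Vilkovisky algebra of $\mathbb{H}_*(LG)$ isomorphic, as a Batalin-Vilkovisky algebra, to $H_*(\Omega^2 BG)$ with $\BV=0$.

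The only point requiring a little care — and the main obstacle, such as it is — is to be sure that we are genuinely landing inside $\mathbb{H}_*(LG)$ as a \emph{sub} Batalin-Vilkovisky algebra rather than merely producing an abstract isomorphism onto the image: this is exactly why one needs injectivity of both maps in the composite, and it is here that the hypothesis that $G$ is a Lie group (making $\Theta_{G,G}$ a homeomorphism) does real work. Everything else is formal: the composite of injective BV-morphisms is an injective BV-morphism, an injective BV-morphism identifies its source with a sub-BV-algebra of its target, and the source has $\BV=0$ by Lemma~\ref{lacets pointe d'un groupe de Lie}. So the statement follows directly, with no computation beyond tracking the vanishing of $\BV_{\Omega G}$.
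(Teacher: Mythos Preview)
Your proof is correct and follows essentially the same approach as the paper, which derives the corollary in one line from Theorem~\ref{morphism de BV algebres des lacets doubles vers lacets libres} and Lemma~\ref{lacets pointe d'un groupe de Lie}. You have simply spelled out the injectivity argument (via $\Theta_{G,G}$ being a homeomorphism) that the paper leaves implicit.
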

\begin{corollary}\label{expression simple du crochet}
Assume the hypothesis of Theorem~\ref{Structure BV sur le produit tensoriel}.
Let $a\in H_*(\Omega G)$ and $x\in\mathbb{H}_*(M)$.
Then the Lie bracket of $a\otimes [M]$ and $1\otimes x$ in the Batalin-Vilkovisky
algebra $H_*(\Omega G)\otimes\mathbb{H}_*(M)$ 
is given by 
$$\{ a\otimes [M],1\otimes x\}=\sum (-1)^{\vert a_{(2)}\vert}
a_{(1)}\otimes \sigma_*(a_{(2)})\cdot x.$$
In particular, if $a$ is primitive,
$\{ a\otimes [M],1\otimes x\}=(-1)^{\vert a\vert}
1\otimes \sigma_*(a)\cdot x$.
\end{corollary}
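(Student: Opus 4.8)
The plan is to compute the Lie bracket directly from its defining formula in Definition~\ref{def BV algebre}, namely $\{u,v\}=(-1)^{|u|}\bigl(\BV(uv)-(\BV u)v-(-1)^{|u|}u(\BV v)\bigr)$, applied to $u=a\otimes[M]$ and $v=1\otimes x$, and then to plug in the explicit formula for $\BV_{\Omega G\times M}$ furnished by Proposition~\ref{operateur BV sur le produit lacets espace}. First I would record the product: since $H_*(\Omega G)\otimes\mathbb{H}_*(M)$ is the tensor product of graded algebras and $[M]$ is the unit of the intersection product $\mathbb{H}_*(M)$, we have $(a\otimes[M])\cdot(1\otimes x)=(-1)^{|x|\cdot 0}\,a\otimes x=a\otimes x$ (the sign is trivial because $[M]$ has degree $0$ in $\mathbb{H}_*(M)$). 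So the bracket reduces to
$$\{a\otimes[M],1\otimes x\}=(-1)^{|a|}\Bigl(\BV_{\Omega G\times M}(a\otimes x)-\BV_{\Omega G\times M}(a\otimes[M])\cdot(1\otimes x)-(-1)^{|a|}(a\otimes[M])\cdot\BV_{\Omega G\times M}(1\otimes x)\Bigr).$$

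Next I would evaluate each of the three terms using Proposition~\ref{operateur BV sur le produit lacets espace}. For the first term, $\BV_{\Omega G\times M}(a\otimes x)=(\BV_{\Omega G}a)\otimes x+\sum(-1)^{|a_{(1)}|}a_{(1)}\otimes(\sigma_*(a_{(2)})\cdot x)$. For the second term, by Corollary~\ref{comparaison delta} (or its proof) the summation part vanishes because $\sigma_*(a_{(2)})$ has positive degree and thus acts trivially on the fundamental class $[M]$, leaving $\BV_{\Omega G\times M}(a\otimes[M])=(\BV_{\Omega G}a)\otimes[M]$; multiplying by $1\otimes x$ gives $(\BV_{\Omega G}a)\otimes x$. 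For the third term, by Corollary~\ref{homologie de variete sous BV algebre} we have $\BV_{\Omega G\times M}(1\otimes x)=0$, so that term drops out entirely. Substituting, the $(\BV_{\Omega G}a)\otimes x$ pieces cancel and we are left with $\{a\otimes[M],1\otimes x\}=(-1)^{|a|}\sum(-1)^{|a_{(1)}|}a_{(1)}\otimes(\sigma_*(a_{(2)})\cdot x)$.

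Finally I would simplify the sign. Since $\Delta a=\sum a_{(1)}\otimes a_{(2)}$ preserves total degree, $|a_{(1)}|+|a_{(2)}|=|a|$, hence $(-1)^{|a|}(-1)^{|a_{(1)}|}=(-1)^{|a_{(2)}|}$, giving exactly the claimed formula $\{a\otimes[M],1\otimes x\}=\sum(-1)^{|a_{(2)}|}a_{(1)}\otimes\sigma_*(a_{(2)})\cdot x$. For the ``in particular'' clause, if $a$ is primitive then $\Delta a=a\otimes 1+1\otimes a$; the term with $a_{(2)}=1$ contributes $\sigma_*(1)\cdot x=0$ (the homology suspension kills the class $1$, as $\sigma$ restricted to $S^1\times\{\hat e\}$ is constant, cf.~equation~\eqref{suspension element neutre}), and the term with $a_{(1)}=1$ contributes $(-1)^{|a|}1\otimes\sigma_*(a)\cdot x$, as asserted.

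I do not expect any serious obstacle here: the entire argument is a formal manipulation of the comultiplication, the Koszul signs, and the three cited results. The only point demanding a little care is bookkeeping the signs correctly — in particular checking that the product $(a\otimes[M])(1\otimes x)$ carries no sign and that the Sweedler-degree identity $(-1)^{|a|+|a_{(1)}|}=(-1)^{|a_{(2)}|}$ is applied consistently — and making sure the two vanishing statements (action on $[M]$ and $\BV$ of $1\otimes x$) are invoked with the right hypotheses, which are precisely those of Theorem~\ref{Structure BV sur le produit tensoriel} assumed in the statement.
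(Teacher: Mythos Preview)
Your proof is correct and follows essentially the same approach as the paper: both compute the bracket from its defining formula, invoke Corollaries~\ref{comparaison delta} and~\ref{homologie de variete sous BV algebre} to identify $\BV_{\Omega G\times M}(a\otimes[M])$ and $\BV_{\Omega G\times M}(1\otimes x)$, apply Proposition~\ref{operateur BV sur le produit lacets espace} for $\BV_{\Omega G\times M}(a\otimes x)$, and then use~\eqref{suspension element neutre} for the primitive case. Your write-up is in fact slightly more explicit than the paper's about the sign simplification $(-1)^{|a|}(-1)^{|a_{(1)}|}=(-1)^{|a_{(2)}|}$.
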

\begin{proof}
Since by Corollary~\ref{comparaison delta} and Corollary~\ref{homologie de variete sous BV algebre},
$H_*(\Omega G)$ and $\mathbb{H}_*(M)$  are two sub-Batalin-Vilkovisky algebras
of the Batalin-Vilkovisky algebra  $H_*(\Omega G)\otimes\mathbb{H}_*(M)$,
\begin{equation}\label{calcul du crochet tordu}
\{ a\otimes [M],1\otimes x\}=
(-1)^{\vert a\vert} \BV_{\Omega G\times M}\left(a\otimes x\right)
-(-1)^{\vert a\vert} \left(\BV_{\Omega G} a\right)\otimes x
-a\times 0.
\end{equation}
Therefore, by Proposition~\ref{operateur BV sur le produit lacets espace},
$$\{ a\otimes [M],1\otimes x\}=\sum (-1)^{\vert a_{(2)}\vert}
a_{(1)}\otimes \sigma_*(a_{(2)})\cdot x.$$
In particular if $\Delta a=a\otimes 1+1\otimes a$,
$$\{ a\otimes [M],1\otimes x\}=a\otimes \sigma_*(1).x+
(-1)^{\vert a\vert} 1\otimes \sigma_*(a)\cdot x.$$
By equation~(\ref{suspension element neutre}), $\sigma_*(1)=0$.
Therefore $\{ a\otimes [M],1\otimes x\}=0+
(-1)^{\vert a\vert} 1\otimes \sigma_*(a)\cdot x$.
\end{proof}
\section{some computations}
Using Hepworth's definition of the homology suspension $\sigma_*$
(Definition~\ref{definition homology suspension}), Lemma 11
of~\cite{menichi:stringtopspheres} becomes the well-known fact.
\begin{lemma}\label{suspension, hurewicz et adjonction}
Let $X$ be a pointed topological space.
Let $n\geq 0$.
Denote by $hur_X:\pi_n(X)\rightarrow H_n(X)$ the Hurewicz map.
We have the commutative diagram
$$
\xymatrix{
\pi_n(\Omega X)\ar[r]^\cong\ar[d]_{hur_{\Omega X}}
&\pi_{n+1}(X)\ar[d]^{hur_X}\\
H_n(\Omega X)\ar[r]_{\sigma_*}
&H_{n+1}(X)
}$$
where $\pi_n(\Omega X)\cong \pi_{n+1}(X)$ is the adjunction map.
\end{lemma}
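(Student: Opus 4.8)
The plan is to verify the commutativity directly from Definition~\ref{definition homology suspension}, by a diagram chase with fundamental classes through the collapse map $q\colon S^1\times S^n\to S^1\wedge S^n\cong S^{n+1}$. I would deliberately \emph{not} route the argument through \cite[Lemma 7]{Hepworth:stringLie} (identifying $\sigma_*$ with the classical homology suspension and then quoting \cite[Chapter VIII]{Whitehead:eltsoht}), precisely because, as remarked above, this paper avoids that identification.

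First I would fix $\alpha\in\pi_n(\Omega X)$ represented by a pointed map $f\colon(S^n,*)\to(\Omega X,\hat e)$, together with a chosen generator (fundamental class) $[S^n]\in H_n(S^n)$, so that $hur_{\Omega X}(\alpha)=f_*([S^n])$. By naturality of the homology cross product, $[S^1]\otimes f_*([S^n])$ is the image of $[S^1]\otimes[S^n]$ under $H_*(\mathrm{id}_{S^1}\times f)$; hence, writing $g:=\sigma\circ(\mathrm{id}_{S^1}\times f)\colon S^1\times S^n\to X$, $g(s,t)=f(t)(s)$, Definition~\ref{definition homology suspension} gives $\sigma_*\bigl(hur_{\Omega X}(\alpha)\bigr)=H_*(g)\bigl([S^1]\times[S^n]\bigr)$, where $[S^1]\times[S^n]\in H_{n+1}(S^1\times S^n)$ denotes the product fundamental class.

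Next I would identify $g$ with the adjoint of $f$. Since each $f(t)$ is a based loop and $f(*)=\hat e$, the map $g$ sends the subspace $(\{1\}\times S^n)\cup(S^1\times\{*\})\cong S^1\vee S^n$ to the basepoint of $X$, so it factors as $S^1\times S^n\xrightarrow{\;q\;}S^1\wedge S^n\xrightarrow{\;\bar g\;}X$. Under the standard homeomorphism $S^1\wedge S^n\cong S^{n+1}$, the map $\bar g$ is by construction the composite $S^1\wedge S^n\xrightarrow{\mathrm{id}\wedge f}S^1\wedge\Omega X\to X$ with the reduced evaluation, i.e.\ the class of $\bar g$ in $\pi_{n+1}(X)$ is exactly the image of $\alpha$ under the adjunction isomorphism. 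By the reduced K\"unneth theorem the collapse $q$ induces the projection onto the top K\"unneth summand, and so carries $[S^1]\times[S^n]$ to the fundamental class $[S^{n+1}]$ of $S^1\wedge S^n$; with the generators chosen compatibly this holds on the nose. Therefore $hur_X$ of the adjoint of $\alpha$ equals $\bar g_*([S^{n+1}])=\bar g_*q_*\bigl([S^1]\times[S^n]\bigr)=g_*\bigl([S^1]\times[S^n]\bigr)$, which by the previous paragraph is $\sigma_*\bigl(hur_{\Omega X}(\alpha)\bigr)$. This proves the square commutes.

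The one place the argument must be handled with care — and essentially the only obstacle — is the orientation bookkeeping in the last step: once the generator $[S^1]$ has been fixed in Definition~\ref{definition homology suspension}, one must check that, with the usual conventions for the loop--suspension adjunction and for fundamental classes of spheres, $q$ sends $[S^1]\times[S^n]$ to $+[S^{n+1}]$ rather than $-[S^{n+1}]$ (equivalently, one fixes the orientation of $S^{n+1}$ so that this is so). For a square of homomorphisms this sign genuinely matters, but the verification is routine. The remaining ingredients — naturality of the cross product and of the Hurewicz transformation — are formal. For $n=0$ the same computation goes through with reduced homology, or is checked by hand since $\pi_0$ carries no extra structure here.
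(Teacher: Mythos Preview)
Your proof is correct. The paper, however, does not actually give a proof of this lemma: it simply introduces the statement with ``Using Hepworth's definition of the homology suspension $\sigma_*$ \ldots, Lemma~11 of~\cite{menichi:stringtopspheres} becomes the well-known fact'' and leaves it at that, treating the result as standard. So there is no proof in the paper to compare against; your direct verification from Definition~\ref{definition homology suspension} via the collapse $S^1\times S^n\to S^1\wedge S^n$ is exactly the elementary argument one would expect, and your decision to avoid~\cite[Lemma~7]{Hepworth:stringLie} is in line with the paper's explicit remark that it never uses that identification.
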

In~\cite[Theorem 10]{menichi:stringtopspheres}
and then in~\cite[Proposition 9]{Hepworth:stringLie},
this Lemma was used to computed the Batalin-Vilkovisky algebra
$\mathbb{H}_*(LS^1)$.
The same proof shows the following Proposition.
\begin{proposition}\label{la BV-algebre dans le cas du cercle}
Let $M$ be a compact oriented manifold equipped with
an action of the circle $S^1$.
Denote by $x$ a generator of $\mathbb{Z}$.
Then the Batalin-Vilkovisky algebra
$H_*(\Omega S^1)\otimes \mathbb{H}_*(M)$
is the tensor product of graded algebras
${\Bbbk}[\mathbb{Z}]\otimes \mathbb{H}_*(M)$
with 
$\BV_{\Omega S^1\times M}( x^i\otimes m)= i x^i\otimes\left([S^1].m\right)$
for any $i\in\mathbb{Z}$, $m\in \mathbb{H}_*(M)$.
\end{proposition}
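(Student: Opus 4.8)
\textbf{Proof proposal for Proposition~\ref{la BV-algebre dans le cas du cercle}.}
The plan is to invoke the general machinery of Section~4 and reduce the computation to knowing the homology suspension $\sigma_*$ on $H_*(\Omega S^1)$. First I would identify the graded algebra structure. Since $S^1$ is a path-connected Lie group, Corollary~\ref{loop algebra des groupes de Lie} (or directly the fact that $\Omega S^1\simeq \mathbb{Z}$ as a topological monoid) gives that $H_*(\Omega S^1)\cong{\Bbbk}[\mathbb{Z}]$ is ${\Bbbk}$-free, concentrated in degree $0$, with $\mathbb{Z}=\pi_1(S^1)$ and $x$ a chosen generator; in particular $H_*(\Omega S^1)$ is torsion free, so the hypotheses of Theorem~\ref{Structure BV sur le produit tensoriel} and Proposition~\ref{operateur BV sur le produit lacets espace} are satisfied, and $H_*(\Omega S^1)\otimes\mathbb{H}_*(M)\cong{\Bbbk}[\mathbb{Z}]\otimes\mathbb{H}_*(M)$ as graded algebras.

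Next I would compute the diagonal of $x^i$ in the Hopf algebra ${\Bbbk}[\mathbb{Z}]$: since each group-like element $x$ is primitive in the sense that $\Delta x = x\otimes x$, we have $\Delta(x^i)=x^i\otimes x^i$. Plugging this into the formula of Proposition~\ref{operateur BV sur le produit lacets espace}, and using that $H_*(\Omega S^1)$ is concentrated in even (in fact zero) degree so that all signs $(-1)^{\vert a_{(1)}\vert}$ equal $1$, gives
$$\BV_{\Omega S^1\times M}(x^i\otimes m)=\left(\BV_{\Omega S^1}x^i\right)\otimes m+x^i\otimes\bigl(\sigma_*(x^i)\cdot m\bigr).$$
Since $H_*(\Omega S^1)$ is concentrated in degree $0$, $\BV_{\Omega S^1}$ vanishes (this is the content of Lemma~\ref{lacets pointe d'un groupe de Lie} applied to $G=S^1$), so the first term is zero and the whole operator is determined by $\sigma_*(x^i)\in H_1(S^1)$.

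The remaining step, and the main point, is to show $\sigma_*(x^i)=i[S^1]$. Here I would use Lemma~\ref{suspension, hurewicz et adjonction}: the class $x^i$ is the image of $i\in\mathbb{Z}=\pi_0(\Omega S^1)$, which lifts to $\pi_1(\Omega S^1)$... more precisely, $x^i$ is the Hurewicz image of the element of $\pi_0(\Omega S^1)\cong\pi_1(S^1)$ corresponding to $i$, and the commutative square of Lemma~\ref{suspension, hurewicz et adjonction} (with $n=0$) identifies $\sigma_*$ with the isomorphism $\pi_0(\Omega S^1)\xrightarrow{\cong}\pi_1(S^1)$ followed by the Hurewicz map $\pi_1(S^1)\to H_1(S^1)$, which sends the generator $i$-times-iterated to $i$ times the fundamental class. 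This is exactly the argument used in \cite[Theorem 10]{menichi:stringtopspheres} for $\mathbb{H}_*(LS^1)$, and it carries over verbatim because the homology suspension only depends on $\Omega S^1$, not on the target manifold $M$. Substituting $\sigma_*(x^i)=i[S^1]$ yields $\BV_{\Omega S^1\times M}(x^i\otimes m)=i\,x^i\otimes([S^1]\cdot m)$, as claimed. I do not anticipate a genuine obstacle here; the only care needed is to track that the degree-zero concentration of $H_*(\Omega S^1)$ kills both the sign subtleties and the term $\BV_{\Omega S^1}$, so that the formula collapses to the single suspension term.
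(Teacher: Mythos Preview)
Your proof is correct and follows essentially the same route as the paper: apply Proposition~\ref{operateur BV sur le produit lacets espace} together with Lemma~\ref{lacets pointe d'un groupe de Lie} to reduce to computing $\sigma_*(x^i)$, and then use Lemma~\ref{suspension, hurewicz et adjonction} (applied to the degree~$i$ map $S^1\to S^1$) to obtain $\sigma_*(x^i)=i[S^1]$. One terminological quibble: you call $x$ ``primitive in the sense that $\Delta x=x\otimes x$'', but that is the definition of \emph{group-like}; the computation you make is nonetheless the right one.
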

\begin{proof}
By applying Lemma~\ref{suspension, hurewicz et adjonction},
to the degree $i$ map $S^1\rightarrow S^1$,
we obtain that $\sigma_*(x^i)=i[S^1]$.
By applying Proposition~\ref{operateur BV sur le produit lacets espace}
and Lemma~\ref{lacets pointe d'un groupe de Lie},
we conclude.
\end{proof}
The following Proposition generalises the computation of the Batalin-Vilkovisky
algebra $\mathbb{H}_*(LS^3)$ due independently to the
author~\cite[Theorem 16]{menichi:stringtopspheres}
and to Tamanoi~\cite{Tamanoi:BVLalg}.
\begin{proposition}\label{calcul pour sphere S3}
Let $M$ be a compact oriented manifold equipped with
an action of the sphere $S^3$.
Then the Batalin-Vilkovisky algebra
$H_*(\Omega S^3)\otimes \mathbb{H}_*(M)$
is the tensor product of graded algebras
${\Bbbk}[u_2]\otimes \mathbb{H}_*(M)$
with 
$\BV_{\Omega S^1\times M}( u_2^i\otimes m)= i u_2^{i-1}\otimes\left([S^3].m\right)$
for any $i\in\mathbb{N}$, $m\in \mathbb{H}_*(M)$.
\end{proposition}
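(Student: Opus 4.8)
The plan is to mimic the proof of Proposition~\ref{la BV-algebre dans le cas du cercle}, replacing the calculation of the homology suspension on $H_*(\Omega S^1)$ by the analogous calculation on $H_*(\Omega S^3)$. First I would recall that, since $S^3$ is a path-connected compact Lie group, Lemma~\ref{lacets pointe d'un groupe de Lie} gives that $H_*(\Omega S^3)$ is ${\Bbbk}$-free and concentrated in even degree, hence $\BV_{\Omega S^3}=0$, and moreover the loop space homology $H_*(\Omega S^3)$ is the polynomial algebra ${\Bbbk}[u_2]$ on a single generator $u_2$ in degree $2$ (this is classical, e.g.\ via the James construction or Bott's theorem). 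In particular $H_*(\Omega S^3)$ is automatically torsion free, so Theorem~\ref{Structure BV sur le produit tensoriel} and Proposition~\ref{operateur BV sur le produit lacets espace} apply, and the underlying graded algebra of $H_*(\Omega S^3)\otimes\mathbb{H}_*(M)$ is the tensor product ${\Bbbk}[u_2]\otimes\mathbb{H}_*(M)$ as claimed.

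The heart of the argument is the computation of the homology suspension $\sigma_*:H_*(\Omega S^3)\to H_{*+1}(S^3)$. Since $H_{*+1}(S^3)$ is concentrated in degree $3$, $\sigma_*$ vanishes except on $H_2(\Omega S^3)={\Bbbk}\,u_2$, where I would show $\sigma_*(u_2)=[S^3]$, the fundamental class. This follows from Lemma~\ref{suspension, hurewicz et adjonction}: the generator $u_2\in H_2(\Omega S^3)$ is the Hurewicz image of the generator of $\pi_2(\Omega S^3)\cong\pi_3(S^3)\cong\mathbb{Z}$, and under the adjunction this generator corresponds to the identity $S^3\to S^3$, whose Hurewicz image is $[S^3]$. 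By multiplicativity of $\sigma_*$ — more precisely, since the diagonal of $u_2^i$ is $\sum_{j}\binom{i}{j}u_2^j\otimes u_2^{i-j}$ and $\sigma_*$ kills all terms $u_2^{k}$ with $k\neq 1$ — one gets from the coproduct formula in Proposition~\ref{operateur BV sur le produit lacets espace} that only the term with $a_{(2)}=u_2$ survives, giving $\BV_{\Omega S^3\times M}(u_2^i\otimes m)=\sum(-1)^{|a_{(1)}|}a_{(1)}\otimes(\sigma_*(a_{(2)})\cdot x)$; since everything is in even degree the sign is $+1$, and the coefficient is exactly the number of ways of splitting off one $u_2$ factor, namely $i$, so $\BV_{\Omega S^3\times M}(u_2^i\otimes m)=i\,u_2^{i-1}\otimes([S^3]\cdot m)$.

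The main (and only) obstacle is the identification $\sigma_*(u_2)=[S^3]$; once this is in place the rest is the purely formal coproduct bookkeeping already packaged in Proposition~\ref{operateur BV sur le produit lacets espace}, exactly as in the $S^1$ case. One should be slightly careful that the action of $S^3$ on $M$ makes $\mathbb{H}_*(M)$ into a module over $H_*(S^3)$ and that $[S^3]\in H_3(S^3)$ is the class appearing in the statement; this is just the definition of the $act_{H_*(X)}$ map used in Proposition~\ref{operateur BV sur le produit lacets espace}. Thus the proof is short: invoke Lemma~\ref{lacets pointe d'un groupe de Lie} for the algebra structure and the vanishing of $\BV_{\Omega S^3}$, invoke Lemma~\ref{suspension, hurewicz et adjonction} to compute $\sigma_*$ on the polynomial generator, and then apply Proposition~\ref{operateur BV sur le produit lacets espace}.
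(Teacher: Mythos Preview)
Your argument is correct, but it takes a slightly different path from the paper's own proof. Both approaches agree on the preliminaries: $H_*(\Omega S^3)={\Bbbk}[u_2]$, the vanishing of $\BV_{\Omega S^3}$ for degree reasons, and $\sigma_*(u_2)=[S^3]$ via Lemma~\ref{suspension, hurewicz et adjonction}. The divergence is in how the coefficient $i$ appears. You apply Proposition~\ref{operateur BV sur le produit lacets espace} directly to $a=u_2^{i}$, using the binomial coproduct $\Delta(u_2^{i})=\sum_j\binom{i}{j}u_2^{j}\otimes u_2^{\,i-j}$ and the observation that $\sigma_*$ kills every $u_2^{k}$ with $k\neq 1$; the surviving term carries the coefficient $\binom{i}{1}=i$. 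The paper instead passes through the Lie bracket: from the primitivity of $u_2$ and Corollary~\ref{expression simple du crochet} it gets $\{u_2\otimes[M],1\otimes m\}=1\otimes([S^3]\cdot m)$, then invokes the Poisson relation~(\ref{Poisson relation}) to obtain $\{u_2^{i}\otimes[M],1\otimes m\}=i\,u_2^{i-1}\otimes([S^3]\cdot m)$, and finally recovers $\BV$ from the bracket together with Corollaries~\ref{homologie de variete sous BV algebre} and~\ref{comparaison delta}. Your route is shorter and purely combinatorial; the paper's route is a bit more structural, showing how the derivation property of the bracket propagates the computation from a single primitive generator --- a viewpoint that scales up to the general rational computation in Theorem~\ref{calcul rationel de la BV-algebre}.
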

\begin{proof}
Let $ad:S^2\rightarrow \Omega S^3$ be the adjoint map and let $u_2$
be $H_*(ad)([S^2])$, the image of $[S^2]$ by  $H_*(ad)$.
By Bott-Samelson theorem, $H_*(\Omega S^3)$ is the polynomial algebra
${\Bbbk}[u_2]$. By Lemma~\ref{suspension, hurewicz et adjonction},
we obtain that $\sigma_*(u_2)=[S^3]$.
Therefore since $u_2$ is primitive,
by Corollary~\ref{expression simple du crochet},
$\{ u_2\otimes [M],1\otimes m \} =1\otimes\left([S^3].m\right)$.
By Corollary~\ref{homologie de variete sous BV algebre},
$\BV_{\Omega S^3\times M}(1\otimes m)=0$.
By Corollary~\ref{comparaison delta},
$\BV_{\Omega S^3\times M}(u_2^i\otimes  [M])=
\BV_{\Omega S^3}(u_2^i)\otimes  [M]=0$.
Using the Poisson
relation~(\ref{Poisson relation}), 
$\{u_{2}^i\otimes [M],1\otimes m\}=iu_{2}^{i-1}\otimes\left([S^3].m\right)$.
Therefore $\BV(u_{2}^i\otimes m)=i u_{2}^{i-1}\otimes\left([S^3].m\right).$
\end{proof}
In~\cite[Proposition 10]{Hepworth:stringLie},
Hepworth gave a new proof for the computation of the Batalin-Vilkovisky
algebra $\mathbb{H}_*(LS^3)$.
This proof of Hepworth gives also a proof of Proposition~\ref{calcul pour sphere S3}.
\section{A sub Lie algebra of $\mathbb{H}_*(LM;\mathbb{Q})$}
Let $M$ be a smooth compact oriented manifold.
We show that the rational free loop homology on $M$,
$\mathbb{H}_*(LM;\mathbb{Q})$, equipped with the loop bracket contains
a sub Lie algebra. 
\begin{definition}(\cite[p. 272]{Gerstenhaber:cohosar}, \cite[7.4.9]{Weibel:inthomalg}, \cite[p. 235]{Hilton-Stammbach})\label{semidirect de Lie}
Let $L$ be a graded Lie algebra. Let $N$ be a left $L$-module.
Then the direct product of graded ${\Bbbk}$-modules $N\times L$ can be equipped with
a Lie bracket defined by
$$
\{(m,l),(m',l')\}:=(l.m'-(-1)^{\vert l'\vert \vert m\vert} l'.m,\{l,l'\})
$$
for $m$, $m'\in N$ and $l,l'\in L$.
This graded Lie algebra is denoted $N\rtimes L$ and is called the {\it semidirect product} of $N$ and $L$
or {\it trivial extension} of $L$ by $N$.
\end{definition}
\begin{example}(The Lie bracket of degree $+1$ on $s^{-1}\pi_{\geq 2}(G)\otimes {\Bbbk}\oplus \mathbb{H}_*(M)$)\label{exemple produit semidirect Lie}
Let $G$ be a path-connected topological monoid
acting on a smooth compact oriented manifold $M$.
The associative algebra $H_*(G)$ acts on
$\mathbb{H}_*(M)$ and therefore on $s\mathbb{H}_*(M)$ by
$
a.sx:=(-1)^{\vert a\vert -1}s(a.x)
$
for $a\in H_*(G)$ and $x\in\mathbb{H}_*(M)$.
Since $G$ is a path-connected topological monoid, by~\cite[X.6.3]{Whitehead:eltsoht},
the Hurewicz morphism $\textit{hur}_G:\pi_*(G)\rightarrow H_*(G)$
is a morphism of graded Lie algebras from the Samelson product to the commutator associated to
the associative algebra $H_*(G)$.
The truncated homotopy groups of $G$, $\pi_{\geq 2}(G)$ is a sub-Lie algebra
of $\pi_*(G)$. So finally, the composite
$$
\pi_{\geq 2}(G)\hookrightarrow \pi_*(G)\buildrel{\textit{hur}_G}\over\rightarrow
H_*(G)\rightarrow\text{Hom}(s\mathbb{H}_*(M),s\mathbb{H}_*(M))
$$
is a morphism of graded Lie algebras, i.e.~$s\mathbb{H}_*(M)$
is a module over the Lie algebra $\pi_{\geq 2}(G)$.
Therefore by the definition of the semidirect product
$s\mathbb{H}_*(M)\rtimes \pi_{\geq 2}(G)$ (Definition~\ref{semidirect de Lie} above),
$s\mathbb{H}_*(M)\oplus \pi_{\geq 2}(G)$ is a graded Lie algebra.
By desuspending, $\mathbb{H}_*(M)\oplus s^{-1}\pi_{\geq 2}(G)$
has a Lie bracket of degree $+1$.

Explicitly the bracket on  $s^{-1}\pi_{\geq 2}(G)\otimes {\Bbbk}\oplus \mathbb{H}_*(M)$
is defined by 

$\bullet$ $\{s^{-1}f,s^{-1}g\}:=s^{-1}\{f,g\}$, (here $\{f,g\}$ is the Samelson bracket of $f$ and $g\in\pi_{\geq 2}(G)\otimes {\Bbbk}$)

$\bullet$ $\{x,y\}:=0$ for $x$ and $y\in  \mathbb{H}_*(M)$, 

$\bullet$ $\{s^{-1}f,x\}:=(-1)^{\vert f\vert -1}\mathit{hur}_Gf.x$. (Here $\mathit{hur}_G:\pi_{*}(G)\otimes {\Bbbk}\rightarrow H_*(G)$ is the Hurewicz morphism.)
\end{example}
\begin{theorem}\label{trivial extension vers loop bracket}
Let $G$ be a path-connected topological monoid with an homotopy inverse
acting on a smooth compact oriented manifold $M$.
Assume that $H_*(\Omega G)$ is torsion free.
Let $\tilde{\Gamma}_1:s^{-1}\pi_{\geq 2}(G)\otimes {\Bbbk}\rightarrow \mathbb{H}_*(LM)$ be the composite
of the adjunction map, the Hurewicz morphism and of the map considered in
Theorem~\ref{morphism de BV algebres des lacets doubles vers lacets libres}
$$\pi_{n+1}G\otimes{\Bbbk}\cong \pi_{n}\Omega G\otimes {\Bbbk}\buildrel{\mathit{hur}_{\Omega G}}\over\rightarrow H_n(\Omega G)
\buildrel{Id\otimes [M]}
\over\rightarrow 
H_n(\Omega G)\otimes\mathbb{H}_0(M)
\buildrel{\mathbb{H}_n(\Theta_{G,M})}\over\rightarrow \mathbb{H}_n(LM)$$
for $n\geq 1$. 

Then the ${\Bbbk}$-linear morphism
$s^{-1}\pi_{\geq 2}(G)\otimes {\Bbbk}\oplus \mathbb{H}_*(M)\rightarrow \mathbb{H}_*(LM)$ mapping $(s^{-1}f,x)$ to $\tilde{\Gamma}_1(s^{-1}f)+\mathbb{H}_*(s)(x)$
is a morphism of Lie algebras between the Lie bracket from example~\ref{exemple produit semidirect Lie} and the loop bracket of $LM$.
\end{theorem}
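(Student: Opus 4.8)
The plan is to exploit that both components of the map $(s^{-1}f,x)\mapsto\tilde\Gamma_1(s^{-1}f)+\mathbb{H}_*(\sectiontriviale)(x)$ factor through Batalin--Vilkovisky algebra morphisms already available in the paper, so that checking the Lie morphism property reduces to three bracket computations inside $H_*(\Omega G)\otimes\mathbb{H}_*(M)$. Write $\bar f\in\pi_n(\Omega G)$ for the adjoint of $f\in\pi_{n+1}(G)$. By the definition of $\tilde\Gamma_1$ together with Theorem~\ref{morphism de BV algebres des lacets doubles vers lacets libres} one has $\tilde\Gamma_1(s^{-1}f)=\mathbb{H}_*(\Theta_{G,M})\bigl(\mathit{hur}_{\Omega G}(\bar f)\otimes[M]\bigr)$, and by the remark following Corollary~\ref{homologie de variete sous BV algebre} one has $\mathbb{H}_*(\sectiontriviale)(x)=\mathbb{H}_*(\Theta_{G,M})(1\otimes x)$. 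Since $\mathbb{H}_*(\Theta_{G,M})$ is a morphism of Batalin--Vilkovisky algebras by Theorem~\ref{Structure BV sur le produit tensoriel}, it preserves the bracket $\{\,,\,\}$ of Definition~\ref{def BV algebre}; and since, by bilinearity and graded antisymmetry of both Lie brackets, it suffices to verify the identity for pairs of the three basic types, the statement reduces to proving, with $a=\mathit{hur}_{\Omega G}(\bar f)$, $b=\mathit{hur}_{\Omega G}(\bar g)$ and $x,y\in\mathbb{H}_*(M)$,
\begin{equation*}
\{a\otimes[M],\,b\otimes[M]\}=\mathit{hur}_{\Omega G}\bigl(\overline{\{f,g\}}\bigr)\otimes[M],
\end{equation*}
\begin{equation*}
\{1\otimes x,\,1\otimes y\}=0,\qquad
\{a\otimes[M],\,1\otimes x\}=(-1)^{|f|-1}\,1\otimes\bigl(\mathit{hur}_G(f)\cdot x\bigr),
\end{equation*}
where $\overline{\{f,g\}}$ denotes the adjoint of the Samelson bracket, after which one compares with the three defining formulas of the bracket of Example~\ref{exemple produit semidirect Lie}.

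The second and third identities are the routine ones. For the second, the map $x\mapsto 1\otimes x$ realizes $\mathbb{H}_*(M)$ as a trivial sub-$H_*(S^1)$-module of $H_*(\Omega G)\otimes\mathbb{H}_*(M)$ (Corollary~\ref{homologie de variete sous BV algebre}), so $\BV$ vanishes on it and the induced bracket is zero. For the third, $a$ is primitive (the Hurewicz image of a class of $\pi_n$, $n\geq 1$, is always primitive), hence Corollary~\ref{expression simple du crochet} applies and gives $\{a\otimes[M],1\otimes x\}=(-1)^{|a|}\,1\otimes(\sigma_*(a)\cdot x)$; Lemma~\ref{suspension, hurewicz et adjonction} identifies $\sigma_*(\mathit{hur}_{\Omega G}(\bar f))$ with $\mathit{hur}_G(f)$, and $|a|=n=|f|-1$ yields the required sign. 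One must merely keep track of the degree-shift convention for the $H_*(G)$-action on $\mathbb{H}_*(M)$, which is exactly the action used in Example~\ref{exemple produit semidirect Lie}.

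The first identity is the heart of the argument, and the comparison of brackets it requires is where I expect the main difficulty to lie. Since $[M]$ is the unit of the intersection product, the map $Id\otimes[M]\colon H_*(\Omega G)\to H_*(\Omega G)\otimes\mathbb{H}_*(M)$ is a ring homomorphism, and it is $H_*(S^1)$-linear by Corollary~\ref{comparaison delta}; hence it is a morphism of Batalin--Vilkovisky algebras and $\{a\otimes[M],b\otimes[M]\}=\{a,b\}_{H_*(\Omega G)}\otimes[M]$, where the bracket on the right is the one carried by $H_*(\Omega G)\cong H_*(\Omega^2 BG)$ via Getzler's structure (Proposition~\ref{lacets pointe d'un groupe est BV}). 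It then remains to show that this bracket, evaluated on Hurewicz images, is $\mathit{hur}_{\Omega G}$ of the adjoint of the Samelson bracket on $\pi_*(G)$. For this one invokes the classical facts that the Batalin--Vilkovisky bracket on $H_*(\Omega^2 BG)$ is the Browder bracket (Getzler), that the Browder bracket is natural with respect to the Hurewicz homomorphism, and that the Browder operation on $\pi_*(\Omega^2 BG)\cong\pi_*(\Omega G)$ corresponds, through the loop-space formulas linking Browder operations, Whitehead products and Samelson products, to the adjoint of the Samelson bracket on $\pi_*(\Omega BG)=\pi_*(G)$; the relevant portion of this chain can also be quoted directly from the second result of Felix and Thomas in~\cite{Felix-Thomas:monsefls}. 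The one genuinely delicate point throughout is the sign bookkeeping across the successive adjunctions and desuspensions, which must be arranged so as to be consistent with the signs in the semidirect-product bracket of Example~\ref{exemple produit semidirect Lie}.
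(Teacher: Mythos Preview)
Your proposal is correct and follows essentially the same route as the paper. The only difference is organizational: you reduce everything at the outset to three bracket computations inside $H_*(\Omega G)\otimes\mathbb{H}_*(M)$ and then push forward along $\mathbb{H}_*(\Theta_{G,M})$, whereas the paper treats the three cases directly at the level of $\mathbb{H}_*(LM)$; in both arguments the first identity is handled by passing to $H_*(\Omega^2BG)$ and invoking the Cohen--Lada--May comparison of the Samelson and Browder brackets via the Hurewicz map, and the second and third are handled exactly as you do, via Corollary~\ref{homologie de variete sous BV algebre}, Corollary~\ref{expression simple du crochet}, and Lemma~\ref{suspension, hurewicz et adjonction}.
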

Recall that $\sectiontriviale:M\hookrightarrow LM$
denotes the trivial section.
\begin{proof}
Recall from the proof of Proposition~\ref{lacets pointe d'un groupe est BV},
that there exists a homomorphism of $H$-spaces $h:G\buildrel{\simeq}\over\rightarrow
\Omega BG$ which is also a weak equivalence and that
$H_*(\Omega h):H_*(\Omega G)\buildrel{\cong}\over\rightarrow
H_*(\Omega^2 BG)$ is an isomorphism of Batalin-Vilkovisky algebras.
Since $h:G\buildrel{\simeq}\over\rightarrow\Omega BG$
is a homomorphism of $H$-spaces between two path-connected homotopy associative
$H$-spaces, $\pi_*(h):\pi_*(G)\buildrel{\cong}\over\rightarrow
\pi_*(\Omega BG)$ is a morphism of graded Lie algebras with respect to the
Samelson brackets on $G$ and on $\Omega BG$.
For $n\geq 1$, consider the commutative diagram of graded ${\Bbbk}$-modules
$$
\xymatrix{
\pi_{n+1}(\Omega BG)\otimes {\Bbbk}\ar[r]^\cong
& \pi_{n}(\Omega^2 BG)\otimes {\Bbbk}\ar[r]^{\textit{hur}_{\Omega^2 BG}}
& H_{n}(\Omega^2 BG)
\\
\pi_{n+1}(G)\otimes {\Bbbk}\ar[r]^\cong\ar[u]^{\pi_{n+1}(h)\otimes {\Bbbk}}
& \pi_{n}(\Omega G)\otimes {\Bbbk}\ar[r]^{\textit{hur}_{\Omega G}}
\ar[u]|{\pi_{n}(\Omega h)\otimes {\Bbbk}}
& H_{n}(\Omega G)\ar[u]_{H_{n}(\Omega h)}
}
$$
By~\cite[Remark 1.2 p. 214-5]{Cohen-Lada-May:homiterloopspaces},
the top line is a morphim of Lie algebras between the Samelson bracket and the Browder bracket.
Therefore, the bottom line is also a morphism of Lie algebras.
By Theorem~\ref{morphism de BV algebres des lacets doubles vers lacets libres},
the composite $$H_n(\Omega G)
\buildrel{Id\otimes [M]}
\over\rightarrow 
H_n(\Omega G)\otimes\mathbb{H}_0(M)
\buildrel{\mathbb{H}_n(\Theta_{G,M})}\over\rightarrow \mathbb{H}_n(LM)$$
is a morphism of Lie algebras.
Therefore, by composition,
$$\tilde{\Gamma}_1:s^{-1}\pi_{n+1}(G)\otimes {\Bbbk}\rightarrow \mathbb{H}_n(LM)$$
is a morphism of Lie algebras between the Samelson bracket of $G$ and the loop bracket of $LM$.

The commutative graded algebra $\mathbb{H}_*(M)$ equipped with the trivial operator $\BV$
can be considered as a Batalin-Vilkovisky algebra.
Since the trivial section $\sectiontriviale:M\hookrightarrow LM$ is $S^1$-equivariant with respect
to the trivial action on $M$,
$\mathbb{H}_*(\sectiontriviale):\mathbb{H}_*(M)\hookrightarrow \mathbb{H}_*(LM)$ is an inclusion of
Batalin-Vilkovisky algebras and so an inclusion of Lie algebras.

Denote by $\tilde{f}\in\pi_n(\Omega G)$, the adjoint of $f\in\pi_{n+1}(G)$.
By Lemma~\ref{suspension, hurewicz et adjonction}, $\sigma_*\circ \textit{hur}_{\Omega G}\tilde{f}=\textit{hur}_{G}f$.
Since $n\geq 1$, $a:=\textit{hur}_{\Omega G}\tilde{f}$ is primitive and so by
Corollary~\ref{expression simple du crochet},
$\{\textit{hur}_{\Omega G}\tilde{f}\otimes [M], 1\otimes x\}=
(-1)^{n}1\otimes\textit{hur}_{G}f.x$.
Since by Theorem~\ref{Structure BV sur le produit tensoriel},
$
H_*(\Theta_{G,M}):H_*(\Omega G)\otimes\mathbb{H}_*(M)\rightarrow \mathbb{H}_*(LM)
$
is a morphism of Lie algebras, finally we have
$$\{\tilde{\Gamma_1}(f), \mathbb{H}_*(\sectiontriviale)(x)\}=(-1)^{n}\mathbb{H}_*(\sectiontriviale)(\textit{hur}_{G}f.x).$$
\end{proof}
\begin{theorem}\label{rationellement injection de Lie}
If ${\Bbbk}$ is a field of characteristic $0$ and $G$ is $aut_1M$,
the monoid of self-equivalences homotopic to the identity,
the morphism of Lie algebras considered in
Theorem~\ref{trivial extension vers loop bracket}
is injective.
\end{theorem}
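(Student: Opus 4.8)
The statement asserts injectivity of the Lie algebra map
$$
\Gamma : s^{-1}\pi_{\geq 2}(aut_1M)\otimes\mathbb{Q}\oplus\mathbb{H}_*(M;\mathbb{Q})\to\mathbb{H}_*(LM;\mathbb{Q}),
\qquad (s^{-1}f,x)\mapsto\tilde\Gamma_1(s^{-1}f)+\mathbb{H}_*(\sectiontriviale)(x).
$$
Since $\mathbb{H}_*(\sectiontriviale)$ is already known to be injective (it is an inclusion of Batalin-Vilkovisky algebras, as recalled in the proof of Theorem~\ref{trivial extension vers loop bracket}), and its image is the "string-contractible" part sitting over the basepoint component, the crux is to show that $\tilde\Gamma_1$ is injective and that its image meets $\mathbb{H}_*(\sectiontriviale)(\mathbb{H}_*(M))$ only in $0$. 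The plan is to factor $\tilde\Gamma_1$ through the Felix--Thomas map and to identify the relevant pieces with the known rational homotopy of $LM$.

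\textbf{Step 1: reduce to $H_*(\Omega\, aut_1M)$.} By definition $\tilde\Gamma_1$ is the composite
$$
\pi_{n+1}(aut_1M)\otimes\mathbb{Q}\cong\pi_n(\Omega\, aut_1M)\otimes\mathbb{Q}\xrightarrow{hur}H_n(\Omega\, aut_1M;\mathbb{Q})\xrightarrow{Id\otimes[M]}H_n(\Omega\, aut_1M)\otimes\mathbb{H}_0(M)\xrightarrow{\mathbb{H}_*(\Theta)}\mathbb{H}_n(LM;\mathbb{Q}).
$$
Over $\mathbb{Q}$ the Hurewicz map from $\pi_*(\Omega\, aut_1M)$ into $H_*(\Omega\, aut_1M)$ is injective on primitives, and in positive degrees $\pi_*\otimes\mathbb{Q}$ lands in the primitives of the Hopf algebra $H_*(\Omega\, aut_1M;\mathbb{Q})$; so $hur$ is injective here. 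Hence it suffices to see that $\mathbb{H}_*(\Theta_{aut_1M,M})\circ(Id\otimes[M])$ is injective on the image of $hur$, and that this image has zero intersection with the image of $\mathbb{H}_*(\sectiontriviale)$.

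\textbf{Step 2: use the second Felix--Thomas result.} This is where the phrase "Using a second result of Felix and Thomas" (Section 6's opening) enters: Felix and Thomas also prove that the composite $\pi_{\geq 2}(aut_1M)\otimes\mathbb{Q}\to\mathbb{H}_*(LM;\mathbb{Q})$ — essentially $\tilde\Gamma_1$ — is injective, or equivalently that the dual map detects these classes via a pairing with explicit cohomology classes on $LM$ (coming from the evaluation fibration $\Omega M\to LM\to M$ and the rational model of $aut_1M$ as related to derivations/$\pi_*(M)$). Concretely, one pairs $\tilde\Gamma_1(s^{-1}f)$ against the transgression-type classes in $H^*(LM;\mathbb{Q})$ pulled back along $ev:LM\to M$ and along the fiber inclusion; since $f\in\pi_{n+1}(aut_1M)\otimes\mathbb{Q}$ is detected in $\pi_{n+1}(\operatorname{map}(M,M);\mathrm{id})\otimes\mathbb{Q}$ and the Felix--Thomas machinery computes exactly this pairing to be nondegenerate, $\tilde\Gamma_1$ is injective. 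The complementary statement — that no nonzero element of the image of $\tilde\Gamma_1$ lies in $\mathbb{H}_*(\sectiontriviale)(\mathbb{H}_*(M))$ — follows because $\mathbb{H}_*(\sectiontriviale)(\mathbb{H}_*(M))$ is detected entirely by $H^*(M)$-classes via the section $\sectiontriviale$, whereas the image of $\tilde\Gamma_1$ in positive degrees is supported on the fiber $\Omega M$ side of the loop fibration and pulls back trivially to $M$ under $\sectiontriviale$; a nonzero element in the intersection would then have to vanish against both families of detecting classes, contradicting Step~1.

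\textbf{Main obstacle.} The genuinely delicate point is Step~2: extracting injectivity of $\tilde\Gamma_1$ from the Felix--Thomas computation of $\pi_*(aut_1M)\otimes\mathbb{Q}$ inside $\mathbb{H}_*(LM;\mathbb{Q})$, and in particular verifying that the image of $\tilde\Gamma_1$ and the image of $\mathbb{H}_*(\sectiontriviale)$ are in "general position" — i.e. that the direct-sum decomposition on the source maps to a direct sum (not merely a sum) in the target. Everything else is formal: the Lie-algebra morphism property is already Theorem~\ref{trivial extension vers loop bracket}, the injectivity of $\mathbb{H}_*(\sectiontriviale)$ is established there, and the reduction to primitives and Hurewicz is standard rational homotopy theory. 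I expect the proof to consist of invoking the appropriate statement from~\cite{Felix-Thomas:monsefls} for the $\tilde\Gamma_1$ part and then a short homological-degree/support argument (via the evaluation fibration) to separate it from the section.
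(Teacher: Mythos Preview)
Your overall strategy---cite Felix--Thomas for the injectivity of $\tilde\Gamma_1$, note that $\mathbb{H}_*(\sectiontriviale)$ is injective because $\sectiontriviale$ is a section, and then argue that the two images are in direct sum---matches the paper. However, you overcomplicate two points.

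First, your Step~1 is unnecessary: the paper simply invokes \cite[Theorem~2]{Felix-Thomas:monsefls}, which already gives injectivity of $\tilde\Gamma_1:\pi_{n+1}(aut_1M)\otimes\mathbb{Q}\hookrightarrow\mathbb{H}_n(LM)$ for $n\geq 1$ as a black box. There is no need to factor through Hurewicz and argue about primitives.

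Second, and more importantly, your ``main obstacle'' evaporates once you look at degrees. The map $\tilde\Gamma_1$ is only defined for $n\geq 1$, so its image sits in $\mathbb{H}_{\geq 1}(LM)$. On the other hand $\mathbb{H}_*(M)=H_{*+d}(M)$ is concentrated in degrees $-d,\dots,0$, so the image of $\mathbb{H}_*(\sectiontriviale)$ sits in $\mathbb{H}_{\leq 0}(LM)$. The two summands of the source therefore land in \emph{disjoint} degree ranges of the target, and injectivity of the total map follows immediately from injectivity of each piece. The paper's proof is exactly this three-line observation; your proposed ``support on the fibre side'' / ``detected by $H^*(M)$-classes'' argument is not needed (and as stated is somewhat vague---it is not obvious, for instance, that $\tilde\Gamma_1$ factors through the fibre of $ev$ in any useful sense).
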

\begin{proof}
Felix and Thomas~\cite[Theorem 2]{Felix-Thomas:monsefls} showed that for $n\geq 1$,
$\tilde{\Gamma_1}:\pi_{n+1}(aut_1M)\otimes{\Bbbk}\hookrightarrow \mathbb{H}_n(LM)$
is injective.
Since $\sectiontriviale$ is a section,
$\mathbb{H}_*(\sectiontriviale)=\mathbb{H}_*(M)\hookrightarrow \mathbb{H}_*(LM)$ is also injective.
Our morphism of Lie algebras coincides with $\tilde{\Gamma_1}$ in positive degree  and with
$\mathbb{H}_*(\sectiontriviale)$ in non-positive degree. Therefore, we have proved the theorem.
\end{proof}
Here is a example due to Yves Felix showing that the Lie algebras considered
in Theorem~\ref{rationellement injection de Lie}
are not abelian even for a very simple manifold $M$.
\begin{example}
Let $M$ be the product of spheres $S^3\times S^7$.
The minimal Sullivan model of $M$ is $(\Lambda(x_3,y_7),0)$ with $x_3$ in (upper) degree $3$
and $y_7$ in (upper) degree $7$.
Consider $\text{Der}\,\Lambda(x_3,y_7)$ the Lie algebra of derivation of $\Lambda(x_3,y_7)$
decreasing the degree.
Let $\theta_1$,  $\theta_2$ and $\theta_3\in \text{Der}\,\Lambda(x_3,y_7)$ given
by $\theta_1(x_3)=1$, $\theta_1(y_7)=0$, $\theta_2(x_3)=0$, $\theta_2(y_7)=1$,
$\theta_3(x_3)=0$ and $\theta_3(y_7)=x_4$. These three derivations form a
basis of the graded vector space $\text{Der}\,\Lambda(x_3,y_7)$.
As graded Lie algebras, $\pi_{\geq 2}(aut_1 M)\otimes\mathbb{Q}$ is isomorphic to $\text{Der}\,\Lambda(x_3,y_7)$.
Since $\theta_2=\theta_1\circ\theta_3=\{\theta_1,\theta_3\}$, the Samelson bracket in   $\pi_{\geq 2}(aut_1 M)\otimes\mathbb{Q}$ is non trivial.
\end{example}
\section{The Batalin-Vilkovisky algebra $\mathbb{H}_*(LG;\mathbb{Q})$}
\begin{theorem}\label{calcul rationel de la BV-algebre}
Let $G$ be a path-connected topological monoid with a homotopy inverse
acting on a smooth compact oriented manifold $M$.
Then the Batalin-Vilkovisky algebra
$H_*(\Omega G;\mathbb{Q})\otimes \mathbb{H}_*(M;\mathbb{Q})$
is the tensor product
$\mathbb{Q}[\pi_1(G)]\otimes \Lambda(s^{-1}\pi_{\geq 2}(G)\otimes\mathbb{Q})
\otimes \mathbb{H}_*(M;\mathbb{Q})$
of the group ring on $\pi_1(G)$, of the free commutative graded algebra
on $\pi_{\geq 1}(\Omega G)\otimes \mathbb{Q}\cong s^{-1}\pi_{\geq 2}(G)\otimes\mathbb{Q}$
and of $\mathbb{H}_*(M;\mathbb{Q})$ equipped with the intersection product.
For any $f\in \pi_1(G)$, $f_1$,..,$f_r\in \pi_{\geq 2}(G)\otimes\mathbb{Q}$,
$r\geq 0$ and $x\in \mathbb{H}_*(M;\mathbb{Q})$,
\begin{multline}\label{operateur BV dans le cas rationnel}
\BV_{\Omega G\times M}(fs^{-1}f_1\dots s^{-1}f_r\otimes x)
=\BV_{\Omega G}(fs^{-1}f_1\dots s^{-1}f_r)\otimes x\\
+(-1)^{\vert f_{1}\vert+\dots+\vert f_{r}\vert +r}
 (fs^{-1}f_1\dots s^{-1}f_r\otimes \text{hur f}\cdot x+\\
\sum_{i=1}^r
(-1)^{(\vert f_i\vert+1)(\vert f_{i+1}\vert+\dots+\vert f_{r}\vert +r-i+1)}
fs^{-1}f_1\dots s^{-1}f_{i-1}s^{-1}f_{i+1}\dots s^{-1}f_r\otimes \text{hur} f_i\cdot x).
\end{multline}
\end{theorem}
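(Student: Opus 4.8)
The plan is to identify the underlying graded algebra first, then to reduce the computation of $\BV_{\Omega G\times M}$ on a monomial to the Lie bracket of Corollary~\ref{expression simple du crochet}, and finally to spread that bracket over the monomial $fs^{-1}f_1\dots s^{-1}f_r$ by the Poisson relation. For the algebra: since $G$ is a path-connected $H$-group, the component decomposition used in the proof of Lemma~\ref{lacets pointe d'un groupe de Lie} yields an isomorphism of algebras $H_*(\Omega G;\mathbb{Q})\cong\mathbb{Q}[\pi_0(\Omega G)]\otimes H_*(\Omega_0 G;\mathbb{Q})=\mathbb{Q}[\pi_1(G)]\otimes H_*(\Omega_0 G;\mathbb{Q})$, and this is in fact an isomorphism of Hopf algebras because the diagonal of $\Omega G$ is multiplicative and sends a point of a component $f$ to $(f,f)$. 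As $\Omega_0 G$ is a path-connected rational $H$-space, the Milnor--Moore/Cartan--Serre theorem identifies $H_*(\Omega_0 G;\mathbb{Q})$ with the free graded-commutative algebra on its primitives, and the rational Hurewicz map identifies the latter with $\pi_*(\Omega_0 G)\otimes\mathbb{Q}=\pi_{\geq 1}(\Omega G)\otimes\mathbb{Q}\cong s^{-1}\pi_{\geq 2}(G)\otimes\mathbb{Q}$; the generator $s^{-1}f_i$ is $\mathrm{hur}_{\Omega G}(\tilde f_i)$ where $\tilde f_i$ is the adjoint of $f_i$. So $H_*(\Omega G;\mathbb{Q})\cong\mathbb{Q}[\pi_1(G)]\otimes\Lambda(s^{-1}\pi_{\geq 2}(G)\otimes\mathbb{Q})$, and in this Hopf algebra each $f\in\pi_1(G)$ is group-like while each $s^{-1}f_i$ is primitive. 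That $H_*(\Omega G;\mathbb{Q})\otimes\mathbb{H}_*(M;\mathbb{Q})$ is a Batalin--Vilkovisky algebra is Theorem~\ref{Structure BV sur le produit tensoriel}, the factor $\mathbb{H}_*(M;\mathbb{Q})$ being the intersection (Poincar\'e duality) algebra, with unit $[M]$.

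\emph{Reduction to a bracket.} Write $a:=fs^{-1}f_1\dots s^{-1}f_r$ and fix $x\in\mathbb{H}_*(M;\mathbb{Q})$. Since $[M]$ is a unit we have $a\otimes x=(a\otimes[M])\cdot(1\otimes x)$, so the defining identity of the bracket (Definition~\ref{def BV algebre}) gives
\begin{multline*}
\BV_{\Omega G\times M}(a\otimes x)=(-1)^{|a|}\{a\otimes[M],1\otimes x\}\\
+\BV_{\Omega G\times M}(a\otimes[M])\cdot(1\otimes x)+(-1)^{|a|}(a\otimes[M])\cdot\BV_{\Omega G\times M}(1\otimes x).
\end{multline*}
By Corollary~\ref{comparaison delta} the second summand equals $(\BV_{\Omega G}a)\otimes x$ and by Corollary~\ref{homologie de variete sous BV algebre} the third vanishes, while $(-1)^{|a|}=(-1)^{|f_1|+\dots+|f_r|+r}$ because $|f|=0$ and $|s^{-1}f_i|=|f_i|-1$. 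It therefore remains to compute $\{a\otimes[M],1\otimes x\}$.

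\emph{Spreading the bracket.} Because $[M]\cdot[M]=[M]$ and the $[M]$-factors lie in degree $0$, we have $a\otimes[M]=(f\otimes[M])(s^{-1}f_1\otimes[M])\dots(s^{-1}f_r\otimes[M])$. Applying the Poisson relation~(\ref{Poisson relation}) repeatedly (in the form $\{uv,w\}=u\{v,w\}+(-1)^{|v|(|w|-1)}\{u,w\}v$) expresses $\{a\otimes[M],1\otimes x\}$ as a signed sum, over the $r+1$ factors, of the product of the remaining factors with the bracket of the chosen factor against $1\otimes x$. For the factor $f\otimes[M]$, Corollary~\ref{expression simple du crochet} applied with $\Delta f=f\otimes f$ gives $\{f\otimes[M],1\otimes x\}=f\otimes\sigma_*(f)\cdot x=f\otimes\mathrm{hur}_G(f)\cdot x$, using Lemma~\ref{suspension, hurewicz et adjonction} with $n=0$. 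For a factor $s^{-1}f_i\otimes[M]$, the primitive case of Corollary~\ref{expression simple du crochet} gives $\{s^{-1}f_i\otimes[M],1\otimes x\}=(-1)^{|f_i|+1}\,1\otimes\sigma_*(s^{-1}f_i)\cdot x=(-1)^{|f_i|+1}\,1\otimes\mathrm{hur}_G(f_i)\cdot x$, again by Lemma~\ref{suspension, hurewicz et adjonction}. Substituting these, and collecting the Koszul signs coming from the Poisson expansion and from the multiplications in $H_*(\Omega G)\otimes\mathbb{H}_*(M)$, the $f$-factor contributes $fs^{-1}f_1\dots s^{-1}f_r\otimes\mathrm{hur}_G(f)\cdot x$ with no residual sign, and the $i$-th factor contributes $(-1)^{(|f_i|+1)(|f_{i+1}|+\dots+|f_r|+r-i+1)}fs^{-1}f_1\dots s^{-1}f_{i-1}s^{-1}f_{i+1}\dots s^{-1}f_r\otimes\mathrm{hur}_G(f_i)\cdot x$, the exponent recording the passage of $s^{-1}f_i$ past $s^{-1}f_{i+1},\dots,s^{-1}f_r$. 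Multiplying by the prefactor $(-1)^{|a|}=(-1)^{|f_1|+\dots+|f_r|+r}$ reproduces formula~(\ref{operateur BV dans le cas rationnel}).

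\emph{Main obstacle.} All the ingredients are corollaries already established; the only genuinely delicate point is the sign bookkeeping in the last step, which must be carried out carefully. One could instead argue directly from Proposition~\ref{operateur BV sur le produit lacets espace}, expanding $\Delta a=(f\otimes f)\prod_i(s^{-1}f_i\otimes 1+1\otimes s^{-1}f_i)$ and using that the homology suspension $\sigma_*$ kills products of two positive-degree classes, so that only the summands whose right tensor factor is $f$ or some $f\cdot s^{-1}f_i$ survive; this variant needs the extra remark that $\sigma_*(f\cdot s^{-1}f_i)=\sigma_*(s^{-1}f_i)$, which holds because $f$ is a degree-zero group-like represented by a based loop.
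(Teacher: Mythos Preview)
Your proof is correct and follows essentially the same route as the paper: identify $H_*(\Omega G;\mathbb{Q})$ via the component decomposition and Milnor--Moore, reduce $\BV_{\Omega G\times M}(a\otimes x)$ to $(\BV_{\Omega G}a)\otimes x+(-1)^{|a|}\{a\otimes[M],1\otimes x\}$ using Corollaries~\ref{comparaison delta} and~\ref{homologie de variete sous BV algebre} (this is exactly equation~(\ref{calcul du crochet tordu}) in the paper), then expand the bracket over the product $f\cdot s^{-1}f_1\cdots s^{-1}f_r$ by the Poisson relation and evaluate each factor via Corollary~\ref{expression simple du crochet} together with Lemma~\ref{suspension, hurewicz et adjonction}. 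The only difference is cosmetic: the paper writes the Poisson identity as $\{bc,a\}=b\{c,a\}+(-1)^{|b||c|}c\{b,a\}$, equivalent to your form after commuting $\{u,w\}$ past $v$.
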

\begin{proof}
The isomorphism of algebras
$H_*(\Omega G)\cong \mathbb{Q}[\pi_1(G)]\otimes H_*(\Omega \tilde{G})$
given in the proof of Lemma~\ref{lacets pointe d'un groupe de Lie}
is in fact an isomorphism of Hopf algebras.
Therefore $f$ is a group-like element of $H_*(\Omega G)$.
So by Corollary~\ref{expression simple du crochet}
$\{f\otimes [M], 1\otimes x\}
=f\otimes hur_G f\cdot x$.

By Milnor-Moore~\cite[Theorem 21.5]{Felix-Halperin-Thomas:ratht}
and Cartan-Serre theorems,
the Hurewicz morphism 
$\pi_{\geq 1}\Omega G\otimes \mathbb{Q}\cong
\pi_{\geq 1}\Omega \tilde{G}\otimes \mathbb{Q}
\rightarrow H_*(\Omega \tilde{G};\mathbb{Q})$
extends to an isomorphism of Hopf algebras
$\Lambda (\pi_{\geq 1}\Omega G\otimes \mathbb{Q})\buildrel{\cong}\over\rightarrow
H_*(\Omega \tilde{G};\mathbb{Q})$.
Denote by $s^{-1}f_i\in\pi_{\vert f_i\vert-1}(\Omega G)\otimes \mathbb{Q}$
the adjoint of $f_i\in\pi_{\vert f_i\vert}(G)\otimes \mathbb{Q}$.
As we already saw in Theorem~\ref{trivial extension vers loop bracket},
by Corollary~\ref{expression simple du crochet} and
Lemma~\ref{suspension, hurewicz et adjonction},
$\{s^{-1}f_i\otimes [M], 1\otimes x\}
=(-1)^{\vert s^{-1}f_i\vert}1\otimes hur_G f_i\cdot x$.
Using the graded commutativity of the product and the graded antisymmetry
of the Lie bracket of degree $+1$, the Poisson relation~(\ref{Poisson relation})
can be rewritten as
$$
\{ bc,a \}=b\{ c,a\}+(-1)^{\vert b\vert \vert c\vert}c\{ b,a\}
$$
in any Gerstenhaber algebra. 
Note that the sign in this formula is given exactly by the Koszul rule.
Therefore by immediate induction,
\begin{multline*}
\{ fs^{-1}f_1\dots s^{-1}f_r\otimes [M], 1\otimes x\}=
\{ f\otimes [M]. s^{-1}f_1\otimes [M]\dots s^{-1}f_r\otimes [M],1\otimes x\}\\
=s^{-1}f_1\dots s^{-1}f_r \otimes [M].\{f\otimes [M], 1\otimes x\}\\
+\sum_{i=1}^r (-1)^{\vert s^{-1}f_i\vert\vert s^{-1}f_{i+1}\dots s^{-1}f_{r}\vert}
fs^{-1}f_1\dots s^{-1}f_{i-1}s^{-1}f_{i+1}\dots s^{-1}f_r\otimes [M]
.\{s^{-1}f_i\otimes [M],1\otimes x\}\\
=f s^{-1}f_1\dots s^{-1}f_r \otimes hur_G f.x\\
+\sum_{i=1}^r (-1)^{\vert s^{-1}f_i\vert\vert s^{-1}f_{i+1}\dots s^{-1}f_{r}\vert+\vert s^{-1}f_i\vert}
fs^{-1}f_1\dots s^{-1}f_{i-1}s^{-1}f_{i+1}\dots s^{-1}f_r\otimes hur_G f_i.x
\end{multline*}
Let $a=fs^{-1}f_1\dots s^{-1}f_r \in H_*(\Omega G)$.
By equation~(\ref{calcul du crochet tordu})
$$
\BV_{\Omega G\times M}(a\otimes x)=
\BV_{\Omega G}(a)\otimes x
+(-1)^{\vert a\vert}\{a\otimes [M],1\otimes x\}
$$
Therefore the theorem is proved.
\end{proof}
In the proof of Theorem~\ref{calcul rationel de la BV-algebre},
we have only used that the algebra $H_*(\Omega G;\mathbb{Q})$
is generated by its spherical elements (i.e.~elements in the image
of the Hurewicz map).
Therefore the formula~(\ref{operateur BV dans le cas rationnel})
holds over any principal ideal domain ${\Bbbk}$
where $H_*(\Omega G;{\Bbbk})$ is generated by its spherical elements.
This is also the case for example if $G$ is the circle $S^1$ or the
three-dimensional sphere $S^3$. This explains why formula
~(\ref{operateur BV dans le cas rationnel})
generalizes Propositions~\ref{la BV-algebre dans le cas du cercle}
and~\ref{calcul pour sphere S3}.

Theorem~\ref{calcul rationel de la BV-algebre} tells us in particular
that if we know the Batalin-Vilkovisky algebra $H_*(\Omega G;\mathbb{Q})$ and the action of the spherical elements of $H_*(G)$ on $\mathbb{H}_*(M)$,
we can compute the Batalin-Vilkovisky algebra
$H_*(\Omega G;\mathbb{Q})\otimes\mathbb{H}_*(M;\mathbb{Q})$.
In~\cite[Theorem 4.4]{Gaudens-Menichi}, Gerald Gaudens and the author
computed the Batalin-Vilkovisky algebra
$H_*(\Omega G;\mathbb{Q})\cong H_*(\Omega^2 BG;\mathbb{Q})$
assuming that $G$ is simply-connected.
As we have already seen in Lemma~\ref{lacets pointe d'un groupe de Lie},
the Batalin-Vilkovisky algebra $H_*(\Omega G)$ is also known when
$G$ is a path-connected Lie group.
Therefore, we have:
\begin{corollary}\label{calcul rationel pour les groupes de Lie}
Let $G$ be a path-connected Lie  group
acting on a smooth compact oriented manifold $M$.
Then the Batalin-Vilkovisky algebra
$H_*(\Omega G;\mathbb{Q})\otimes \mathbb{H}_*(M;\mathbb{Q})$
is the tensor product
$\mathbb{Q}[\pi_1(G)]\otimes \Lambda(s^{-1}\pi_{\geq 3}(G)\otimes\mathbb{Q})
\otimes \mathbb{H}_*(M;\mathbb{Q})$
of the group ring on $\pi_1(G)$, of the symmetric algebra
on $ s^{-1}\pi_{\geq 3}(G)$
and of $\mathbb{H}_*(M;\mathbb{Q})$ equipped with the intersection product.
For any $f\in \pi_1(G)$, $f_1$,..,$f_r\in \pi_{\geq 3}(G)\otimes\mathbb{Q}$,
$r\geq 0$ and $x\in \mathbb{H}_*(M;\mathbb{Q})$,
\begin{multline*}
\BV_{\Omega G\times M}(fs^{-1}f_1\dots s^{-1}f_r\otimes x)
=
 fs^{-1}f_1\dots s^{-1}f_r\otimes \text{hur f}\cdot x\\
+\sum_{i=1}^r
fs^{-1}f_1\dots s^{-1}f_{i-1}s^{-1}f_{i+1}\dots s^{-1}f_r\otimes \text{hur} f_i\cdot x.
\end{multline*}
\end{corollary}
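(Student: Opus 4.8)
The plan is to obtain this statement as an immediate specialization of Theorem~\ref{calcul rationel de la BV-algebre} to the case where $G$ is a path-connected Lie group, using two elementary inputs. First I would recall that a path-connected Lie group deformation retracts onto its maximal compact subgroup, which is rationally a product of odd-dimensional spheres; consequently $\pi_2(G)\otimes\mathbb{Q}=0$ and $\pi_{\geq 2}(G)\otimes\mathbb{Q}$ is concentrated in odd degrees. In particular $\pi_{\geq 2}(G)\otimes\mathbb{Q}=\pi_{\geq 3}(G)\otimes\mathbb{Q}$, so the desuspended space $s^{-1}\pi_{\geq 2}(G)\otimes\mathbb{Q}=s^{-1}\pi_{\geq 3}(G)\otimes\mathbb{Q}$ lies in even degrees; since the free commutative graded algebra on an evenly graded vector space is precisely the symmetric (polynomial) algebra, the underlying algebra furnished by Theorem~\ref{calcul rationel de la BV-algebre} is exactly $\mathbb{Q}[\pi_1(G)]\otimes\Lambda(s^{-1}\pi_{\geq 3}(G)\otimes\mathbb{Q})\otimes\mathbb{H}_*(M;\mathbb{Q})$, as asserted.

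Next I would simplify formula~(\ref{operateur BV dans le cas rationnel}) in this setting. By Lemma~\ref{lacets pointe d'un groupe de Lie}, $H_*(\Omega G;\mathbb{Q})$ is concentrated in even degree, so $\BV_{\Omega G}=0$ and the first term on the right-hand side of~(\ref{operateur BV dans le cas rationnel}) disappears. It then remains to check that every sign occurring in the surviving terms equals $+1$. Each $f_i\in\pi_{\geq 3}(G)\otimes\mathbb{Q}$ has odd degree, so $\vert f_i\vert+1$ is even; hence $(-1)^{\vert f_1\vert+\dots+\vert f_r\vert+r}=(-1)^{(\vert f_1\vert+1)+\dots+(\vert f_r\vert+1)}=1$, and each exponent $(\vert f_i\vert+1)(\vert f_{i+1}\vert+\dots+\vert f_r\vert+r-i+1)$ is even because it carries the even factor $\vert f_i\vert+1$, so $(-1)^{(\vert f_i\vert+1)(\dots)}=1$ too. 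Substituting $\BV_{\Omega G}=0$ and these trivial signs into~(\ref{operateur BV dans le cas rationnel}) yields precisely the displayed formula for $\BV_{\Omega G\times M}$.

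This argument is pure bookkeeping once Theorem~\ref{calcul rationel de la BV-algebre} is in hand, and I do not expect a genuine obstacle; the only point requiring a moment's care is the parity discussion, which rests entirely on the classical fact that a connected Lie group has vanishing rational $\pi_2$ and odd rational homotopy. It is worth emphasizing that this single parity phenomenon is what simultaneously turns the free graded-commutative algebra into a polynomial algebra and forces all the Koszul signs in~(\ref{operateur BV dans le cas rationnel}) to vanish, which is exactly why the Lie group case is markedly cleaner than the general one.
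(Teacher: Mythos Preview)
Your proof is correct and follows essentially the same approach as the paper: both deduce the corollary from Theorem~\ref{calcul rationel de la BV-algebre} by observing that the rational homotopy groups of a connected Lie group are concentrated in odd degree, so that the $s^{-1}f_i$ lie in even degree, which simultaneously identifies $\Lambda(s^{-1}\pi_{\geq 2}(G)\otimes\mathbb{Q})$ as a polynomial algebra and forces all signs in~(\ref{operateur BV dans le cas rationnel}) to be $+1$. Your version is simply more explicit, in particular by spelling out the invocation of Lemma~\ref{lacets pointe d'un groupe de Lie} to kill the $\BV_{\Omega G}$ term, which the paper leaves implicit in the discussion preceding the corollary.
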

Note that there are no signs in this Corollary.
\begin{proof}
The rational homotopy groups $\pi_*(G)\otimes \mathbb{Q}$ are
concentrated in odd degrees. Therefore $s^{-1}f_i$ are all in even degree
and the Corollary follows from Theorem~\ref{calcul rationel de la BV-algebre}.
\end{proof}
\begin{theorem}\label{homologie rationel des lacets sur les groupes de Lie}
Let $G$ be a path-connected compact Lie group.
Let $x_1$, \dots, $x_l$ be a basis of the free part of $\pi_1(G)$.
Let $x_{l+1}$, \dots, $x_r$ be a basis of the $\mathbb{Q}$-vector space
$\pi_{\geq 3}(G)\otimes\mathbb{Q}$.
Denote by $(x_i^\vee)_{1\leq i\leq r}$ the dual basis in
$(\pi_*(G)\otimes\mathbb{Q})^\vee$.
Then the Batalin-Vilkovisky algebra $\mathbb{H}_*(LG;\mathbb{Q})$
is isomorphic to the tensor product
$
\mathbb{Q}[\pi_1(G)]\otimes \Lambda (s^{-1}x_j)_{l<j\leq r}\otimes
\Lambda(x_i^\vee)_{1\leq i\leq r}
$
of the group ring on $\pi_1(G)$, the polynomial algebra on $s^{-1}x_{l+1}$,\dots,
$s^{-1}x_r$ and the exterior algebra on $x_1^\vee$,\dots, $x_r^\vee$.
For $n_1$, \dots,$n_l\in\mathbb{Z}$ and $n_{l+1}$, \dots, $n_{r}\geq 0$, $p\geq 0$ and $1\leq j_1<\dots< j_p\leq r$ and $y$ any element of $\pi_1(G)$ with torsion,
$$
\BV (x_1^{n_1}\dots x_l^{n_l}ys^{-1}x_{l+1}^{n_{l+1}}\dots s^{-1}x_r^{n_r}
\otimes x_{j_1}^\vee\dots x_{j_p}^\vee)$$
$$
=\sum_{i=1, j_i\leq l}^p (-1)^{i-1}n_{j_i} x_1^{n_1}\dots x_l^{n_l}ys^{-1}x_{l+1}^{n_{l+1}}\dots s^{-1}x_r^{n_r}
\otimes x_{j_1}^\vee\dots\widehat{x_{j_i}^\vee}\dots x_{j_p}^\vee $$
$$+\sum_{i=1, j_i > l}^p (-1)^{i-1}n_{j_i} x_1^{n_1}\dots x_l^{n_l}y
s^{-1}x_{l+1}^{n_{l+1}}\dots s^{-1}x_{j_i}^{n_{j_i}-1}\dots s^{-1}x_r^{n_r}
\otimes x_{j_1}^\vee\dots\widehat{x_{j_i}^\vee}\dots x_{j_p}^\vee 
$$
Here $\widehat{\quad}$ denotes omission.
\end{theorem}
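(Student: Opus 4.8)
The plan is to reduce the statement to Corollary~\ref{calcul rationel pour les groupes de Lie}. Apply it with $M=G$ and $G$ acting on itself by left translation; since $\Theta_{G,G}:\Omega G\times G\to LG$ is then an $S^1$-equivariant homeomorphism, Remark~\ref{theta homeo dans le cas des groupes de Lie} and Corollary~\ref{loop algebra des groupes de Lie} identify the Batalin--Vilkovisky algebra $\mathbb{H}_*(LG;\mathbb{Q})$ with $H_*(\Omega G;\mathbb{Q})\otimes\mathbb{H}_*(G;\mathbb{Q})$ equipped with the operator $\BV_{\Omega G\times G}$ written down in that Corollary. What remains is to recognise the two tensor factors as the stated algebras and to unpack the operator formula.

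For the first factor, the argument in the proof of Lemma~\ref{lacets pointe d'un groupe de Lie} together with the Milnor--Moore and Cartan--Serre theorems, exactly as in the proof of Theorem~\ref{calcul rationel de la BV-algebre}, identifies $H_*(\Omega G;\mathbb{Q})$ with the Hopf algebra $\mathbb{Q}[\pi_1(G)]\otimes\Lambda(\pi_{\geq 1}(\Omega G)\otimes\mathbb{Q})$; since $\pi_2(G)=0$, the module $\pi_{\geq 1}(\Omega G)\otimes\mathbb{Q}\cong s^{-1}\pi_{\geq 3}(G)\otimes\mathbb{Q}$ has basis the even-degree classes $s^{-1}x_{l+1},\dots,s^{-1}x_r$, whose exterior algebra is the polynomial algebra $\Lambda(s^{-1}x_j)_{l<j\le r}$; the $x_i$ with $i\le l$ are group-like and the $s^{-1}x_j$ with $j>l$ are primitive. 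For the second factor, Poincaré duality gives an isomorphism of graded algebras $\mathbb{H}_*(G;\mathbb{Q})=H_{*+d}(G;\mathbb{Q})\cong H^{-*}(G;\mathbb{Q})$ taking the intersection product to the cup product, and by Hopf's theorem $H^*(G;\mathbb{Q})$ is the exterior algebra on its primitives, which we choose to be the classes $x_i^\vee$ dual to the Hurewicz images $\text{hur}_G(x_i)$ of the basis $x_1,\dots,x_r$ of $\pi_{\ge 1}(G)\otimes\mathbb{Q}$.

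The crux is to make explicit the $H_*(G;\mathbb{Q})$-module action on $\mathbb{H}_*(G;\mathbb{Q})$ that appears in the operator formula. Since it is induced by the left-translation map $G\times G\to G$, on homology it is just the Pontryagin product, regarding $\mathbb{H}_*(G;\mathbb{Q})=H_{*+d}(G;\mathbb{Q})$ as a module over the Pontryagin ring $H_*(G;\mathbb{Q})=\Lambda(\text{hur}_G(x_1),\dots,\text{hur}_G(x_r))$. I claim that, transported through the Poincaré duality isomorphism above, this module structure becomes the interior product: for each $i$,
$$\text{hur}_G(x_i)\cdot(x_{j_1}^\vee\cdots x_{j_p}^\vee)=\sum_{m=1}^p(-1)^{m-1}\,\delta_{i,j_m}\,x_{j_1}^\vee\cdots\widehat{x_{j_m}^\vee}\cdots x_{j_p}^\vee .$$
This should follow from the standard compatibility of Poincaré duality with cap products, together with the fact that $\mu^*$ is the coproduct of the Hopf algebra $H^*(G;\mathbb{Q})$, so that $\mu_*(\text{hur}_G(x_i)\otimes-)$ is dual to contracting that coproduct against $x_i^\vee$ in its first slot; alternatively one verifies the identity directly on the monomial basis. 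I expect this step, and in particular tracking the Koszul signs, to be the main technical obstacle.

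It then remains to assemble the computation. Write a basis element of $\mathbb{H}_*(LG;\mathbb{Q})$ as $a\otimes x$ with $a=f\,s^{-1}x_{l+1}^{n_{l+1}}\cdots s^{-1}x_r^{n_r}$, $f=x_1^{n_1}\cdots x_l^{n_l}y\in\pi_1(G)$, $x=x_{j_1}^\vee\cdots x_{j_p}^\vee$, and substitute it into the sign-free formula of Corollary~\ref{calcul rationel pour les groupes de Lie}. Its first summand yields $a\otimes(\text{hur}_G(f)\cdot x)$; as $H_1(G;\mathbb{Q})$ is torsion free, $\text{hur}_G(f)=\sum_{i\le l}n_i\,\text{hur}_G(x_i)$, and the contraction formula turns this into $\sum_{i:\,j_i\le l}(-1)^{i-1}n_{j_i}\,a\otimes x_{j_1}^\vee\cdots\widehat{x_{j_i}^\vee}\cdots x_{j_p}^\vee$, the first sum of the Theorem. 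In its second summand, grouping the $\sum_{j>l}n_j$ deleted factors into the $n_j$ copies of each $x_j$ (deleting one copy costs no sign, these classes being even) gives $\sum_{j>l}n_j\,(f\,s^{-1}x_{l+1}^{n_{l+1}}\cdots s^{-1}x_j^{n_j-1}\cdots s^{-1}x_r^{n_r})\otimes(\text{hur}_G(x_j)\cdot x)$, which the contraction formula rewrites as the second sum. This would finish the proof; the binomial coefficients one would see in the alternative route through Corollary~\ref{string homology Lie group} (using that the homology suspension kills decomposables and that $\sigma_*(s^{-1}x_j)=\text{hur}_G(x_j)$) collapse for the same reason to the single coefficient $n_j$.
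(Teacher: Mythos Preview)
Your proposal is correct and follows essentially the same route as the paper: reduce to Corollary~\ref{calcul rationel pour les groupes de Lie} via the $S^1$-equivariant homeomorphism $\Theta_{G,G}$, identify the two tensor factors via Milnor--Moore and Poincar\'e duality, and then compute the $H_*(G)$-action on $\mathbb{H}_*(G)\cong H^*(G)$ as the interior product~(\ref{action de homologie de G sur la cohomologie de G}). The paper carries out explicitly the one step you flag as the technical obstacle---it verifies the contraction formula by writing $[G]=x_1\cdots x_r$ and computing the iterated cap products $x_{j_1}^\vee\cdots x_{j_p}^\vee\cap[G]$ and $x_{j_i}\cdot(x_{j_1}^\vee\cdots x_{j_p}^\vee\cap[G])$ directly on the monomial basis, tracking the Koszul signs---rather than appealing to duality of $\mu_*$ and $\mu^*$.
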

When $G=SO(n)$, $n\geq 3$, the special orthogonal group,
the Batalin-Vilkovisky algebra $\mathbb{H}_*(LSO(n);\mathbb{Q})$
has been first computed by Hepworth~\cite[Theorem 4]{Hepworth:stringLie}.
but his formula uses a different presentation of the algebra
$H_*(\Omega SO(n);\mathbb{Q})$ by generators and relations.
\begin{proof}
As we already explained in Remark~\ref{theta homeo dans le cas des groupes de Lie},
$$\mathbb{H}_*(\Theta_{G,G}):H_*(\Omega G)\otimes\mathbb{H}_*(G)
\buildrel{\cong}\over\rightarrow \mathbb{H}_*(LG)$$
is an isomorphism of Batalin-Vilkovisky algebras.
By Milnor-Moore theorem, as Hopf algebras,
$H_*(G)$ is the exterior algebra $\Lambda(x_i)_{1\leq i\leq r}$
where $x_i$ are primitive elements of odd degree.
Therefore $H^*(G)$ is the exterior algebra 
$\Lambda (x_i^\vee)_{1\leq i\leq r}$.

By Poincar\'e duality, the cap product with the fundamental class
$[G]$ gives an isomorphism of graded algebras
$-\cap [G]:H^{*}(G)\buildrel{\cong}\over\rightarrow \mathbb{H}_*(G)$
between the cup product and the intersection product.
(Note that this isomorphism respects degrees since a non-negative upper
degree corresponds to a non-positive lower degree by the classical convention
of~\cite[p. 41-2]{Felix-Halperin-Thomas:ratht}).

The fundamental class of $G$, $[G]$, is the product $x_1\dots x_r$.
Let $i_1$,\dots,$i_p$ be $p$ integers between $1$ and $r$.
Let $1\leq k\leq p$.
The cap product of $x_{i_k}^\vee$ with $x_{i_1}\dots x_{i_p}$
is $(-1)^{k-1} x_{i_k}^\vee(x_{i_k})x_{i_1}\dots\widehat{x_{i_k}}\dots x_{i_p}$
since $(-1)^{k-1}$ is the Koszul sign obtained by exchanging
$x_{i_k}$ and $x_{i_1}\dots x_{i_{k-1}}$.
Therefore $$x_{i_k}^\vee\cap x_{i_1}\dots x_{i_p}=(-1)^{k-1}x_{i_1}\dots\widehat{x_{i_k}}\dots x_{i_p}.$$
In particular,
\begin{align*}
x_{j_p}^\vee\cap x_1\dots x_r
&=(-1)^{j_p-1} x_{1}\dots\widehat{x_{j_p}}\dots x_{r},\\
x_{j_{p-1}}^\vee\cap x_1\dots\widehat{x_{j_p}}\dots x_r
&=(-1)^{j_{p-1}-1}x_{1}\dots\widehat{x_{j_{p-1}}}\widehat{x_{j_p}}\dots x_{r},\\
&.\\
&.\\
\text{and}\;x_{j_{1}}^\vee\cap x_1\dots\widehat{x_{j_2}}\dots\widehat{x_{j_p}}\dots x_r
&=(-1)^{j_1-1} x_{1}\dots\widehat{x_{j_{1}}}\dots\widehat{x_{j_p}}\dots x_{r}.
\end{align*}
Therefore
\begin{multline*}
x_{j_1}^\vee\dots x_{j_p}^\vee\cap [G]=
x_{j_1}^\vee\dots x_{j_p}^\vee\cap x_1\dots x_r=
x_{j_1}^\vee\cap(x_{j_2}^\vee\cap\dots (x_{j_p}^\vee\cap x_1\dots x_r))\\=
(-1)^{j_1-1+\dots +j_p-1} x_{1}\dots\widehat{x_{j_{1}}}\dots\widehat{x_{j_p}}\dots x_{r}.
\end{multline*}
Using the Poincar\'e duality isomorphism, we can now transport the action
of $H_*(G)$ on $\mathbb{H}_*(G)$ given by multiplication into an action
of $H_*(G)$ on $H^*(G)$.
Since
\begin{multline*}
x_{j_i}\cdot\left( x_{j_1}^\vee\dots x_{j_p}^\vee\cap [G]\right)=
(-1)^{j_1-1+\dots j_p-1} x_{j_i}x_{1}\dots\widehat{x_{j_{1}}}\dots\widehat{x_{j_p}}\dots x_{r}\\
=(-1)^{j_i-1-i+1}(-1)^{j_1-1+\dots +j_p-1}
x_1\dots\widehat{x_{j_1}}\dots \widehat{x_{j_{i-1}}}x_{j_i}
\widehat{x_{j_{i+1}}}\dots \widehat{x_{j_p}}\dots x_r\\
=(-1)^{i-1} x_{j_1}^\vee\dots \widehat{x_{j_i}^\vee}\dots x_{j_p}^\vee\cap [G],
\end{multline*}
we obtain that
\begin{equation}\label{action de homologie de G sur la cohomologie de G}
x_{j_i}\cdot x_{j_1}^\vee\dots x_{j_p}^\vee
=(-1)^{i-1} x_{j_1}^\vee\dots \widehat{x_{j_i}^\vee}\dots x_{j_p}^\vee
\end{equation}
$$\text{and}\quad x_{j}\cdot x_{j_1}^\vee\dots x_{j_p}^\vee=0\text{ if }j\notin \{j_1,\dots,j_p\}.$$
Let $f$ be any element of $\pi_1(G)$.
Then written multiplicatively, $f$ is of the form $x_1^{n_1}\dots x_l^{n_l}y$
for some $n_1$,\dots, $n_l\in\mathbb{Z}$
and some element $y$ of $\pi_1(G)$ with torsion.
Then $hur_G:\pi_1(G)\rightarrow \pi_1(G)\otimes\mathbb{Q}\cong H_1(G;\mathbb{Q})$ maps $f$ to $n_1x_1+\dots+n_lx_l+0$
while $hur_G:\pi_{\geq 3}(G)\otimes\mathbb{Q}\rightarrow H_*(G;\mathbb{Q})$
maps $x_j$ to itself for any $l<j\leq r$.

Therefore, by Corollary~\ref{calcul rationel pour les groupes de Lie},
$H_*(\Omega G;\mathbb{Q})\otimes \mathbb{H}_*(G;\mathbb{Q})$
is the tensor product of algebras
$\mathbb{Q}[\pi_1(G)]\otimes \Lambda(s^{-1}\pi_{\geq 3}(G)\otimes\mathbb{Q})
\otimes H^*(G;\mathbb{Q})$ and
\begin{multline*}
\BV_{\Omega G\times G}(fs^{-1}x_{l+1}^{n_{l+1}}\dots s^{-1}x_r^{n_r}\otimes
x_{j_1}^\vee\dots x_{j_p}^\vee)\\
=
 fs^{-1}x_{l+1}^{n_{l+1}}\dots s^{-1}x_r^{n_r}\otimes 
(n_1x_1+\dots+n_lx_l) \cdot x_{j_1}^\vee\dots x_{j_p}^\vee\\
+\sum_{j=l+1}^r n_j
fs^{-1}x_{l+1}^{n_{l+1}}\dots s^{-1}x_j^{n_j-1}\dots s^{-1}x_r^{n_r}\otimes 
x_j \cdot x_{j_1}^\vee\dots x_{j_p}^\vee.
\end{multline*}
Using~(\ref{action de homologie de G sur la cohomologie de G}),
the theorem is proved.
\end{proof}
In~\cite[Theorems 10 and 12]{menichi:stringtopspheres}, we computed
the Batalin-Vilkovisky algebra $\mathbb{H}_*(LS^{2i+1})$ for all odd dimensional spheres. Using this computation, the previous Theorem can be given
the following simple interpretation.
\begin{theorem}
Let $G$ be a path-connected compact Lie group.
Then the Chas-Sullivan Batalin-Vilkovisky algebra on the rational free loop
space homology on $G$, $\mathbb{H}_*(LG;\mathbb{Q})$ is isomorphic to
the tensor product (in the sense of Example~\ref{produit tensoriel BV})
$$
\mathbb{Q}[\pi_1(G)_{\text{tor}}]\otimes \bigotimes_{k=0}^{+\infty}
\mathbb{H}_*(LS^{2k+1};\mathbb{Q})^{\otimes\text{dim }\pi_{2k+1}(G)\otimes\mathbb{Q}}.
$$
Here $\mathbb{Q}[\pi_1(G)_{\text{tor}}]$ denotes the Batalin-Vilkovisky algebra
with trivial operator whose underlying algebra is the group ring on the
torsion subgroup of $\pi_1(G)$.
\end{theorem}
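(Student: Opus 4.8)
The plan is to obtain the statement directly from the explicit description of the Batalin--Vilkovisky algebra $\mathbb{H}_*(LG;\mathbb{Q})$ given in Theorem~\ref{homologie rationel des lacets sur les groupes de Lie}, by recognising that description as a tensor product, in the sense of Example~\ref{produit tensoriel BV}, of the group ring on $\pi_1(G)_{\mathrm{tor}}$ with copies of the free loop space homology of odd spheres.

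First I would record the two inputs. Since $\pi_1(G)$ is a finitely generated abelian group, $\pi_1(G)\cong\mathbb{Z}^l\times\pi_1(G)_{\mathrm{tor}}$ with $l=\dim_{\mathbb{Q}}\pi_1(G)\otimes\mathbb{Q}$, so, writing $x_1,\dots,x_l$ for the basis of the free part of $\pi_1(G)$ appearing in Theorem~\ref{homologie rationel des lacets sur les groupes de Lie}, there is an algebra isomorphism $\mathbb{Q}[\pi_1(G)]\cong\mathbb{Q}[\pi_1(G)_{\mathrm{tor}}]\otimes\mathbb{Q}[x_1^{\pm1},\dots,x_l^{\pm1}]$. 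On the other hand I would recall from~\cite{menichi:stringtopspheres} (for $k=0$ this is also Proposition~\ref{la BV-algebre dans le cas du cercle} with $M=S^1$, and for $k=1$ Proposition~\ref{calcul pour sphere S3} with $M=S^3$) that $\mathbb{H}_*(LS^1;\mathbb{Q})\cong\mathbb{Q}[t^{\pm1}]\otimes\Lambda(\alpha)$ with $\vert t\vert=0$, $\vert\alpha\vert=-1$, $\BV(t^n)=0$, $\BV(t^n\alpha)=nt^n$, and, for $k\geq1$, that $\mathbb{H}_*(LS^{2k+1};\mathbb{Q})\cong\mathbb{Q}[u]\otimes\Lambda(\alpha)$ with $\vert u\vert=2k$, $\vert\alpha\vert=-(2k+1)$, $\BV(u^n)=0$, $\BV(u^n\alpha)=nu^{n-1}$.

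Next I would take the homogeneous basis $x_{l+1},\dots,x_r$ of $\pi_{\geq3}(G)\otimes\mathbb{Q}$ used in Theorem~\ref{homologie rationel des lacets sur les groupes de Lie}; the number of indices $j$ with $\vert x_j\vert=2k+1$ is $\dim_{\mathbb{Q}}\pi_{2k+1}(G)\otimes\mathbb{Q}$, and $l=\dim_{\mathbb{Q}}\pi_1(G)\otimes\mathbb{Q}$ (the product over $k$ in the statement is really finite, since $\pi_*(G)\otimes\mathbb{Q}$ is finite dimensional). Combined with the splitting of the group ring, this identifies, as graded algebras,
$$\mathbb{H}_*(LG;\mathbb{Q})\cong\mathbb{Q}[\pi_1(G)_{\mathrm{tor}}]\otimes\bigotimes_{i=1}^{l}\bigl(\mathbb{Q}[x_i^{\pm1}]\otimes\Lambda(x_i^\vee)\bigr)\otimes\bigotimes_{j=l+1}^{r}\bigl(\mathbb{Q}[s^{-1}x_j]\otimes\Lambda(x_j^\vee)\bigr),$$
which is exactly the asserted tensor product of algebras. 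It then remains to check that the operator $\BV$ of Theorem~\ref{homologie rationel des lacets sur les groupes de Lie} matches, under this identification, the iterated tensor product operator of Example~\ref{produit tensoriel BV}. Since the polynomial generators $x_i^{\pm1}$ (degree $0$) and $s^{-1}x_j$ (even degree) commute strictly and carry no Koszul sign, all signs are concentrated on the odd exterior generators $x_j^\vee$: in the iterated Leibniz rule $\BV(a_1\otimes\cdots\otimes a_n)=\sum_i(-1)^{\vert a_1\vert+\cdots+\vert a_{i-1}\vert}a_1\otimes\cdots\otimes\BV(a_i)\otimes\cdots\otimes a_n$, isolating the $j_i$-th factor produces precisely the sign $(-1)^{i-1}$ that appears in Theorem~\ref{homologie rationel des lacets sur les groupes de Lie} when $x_{j_i}^\vee$ is pulled to the front past the odd classes $x_{j_1}^\vee\cdots x_{j_{i-1}}^\vee$; on each tensor factor the operator is the $\BV$ recalled above, and on the torsion factor it is zero, consistently with the fact that in Theorem~\ref{homologie rationel des lacets sur les groupes de Lie} an element $y\in\pi_1(G)_{\mathrm{tor}}$ contributes only through $\text{hur}\,f$, which annihilates torsion. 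The hard part, indeed essentially the only work, is this sign comparison, and it calls for being careful with the conventions making $s^{-1}x_j$ even and $x_j^\vee$ odd (Poincar\'e duality with the grading convention of~\cite[p.~41--2]{Felix-Halperin-Thomas:ratht}), so that the Koszul signs on the two sides agree.
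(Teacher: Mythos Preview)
Your proposal is correct and follows essentially the same approach as the paper: both start from the explicit description in Theorem~\ref{homologie rationel des lacets sur les groupes de Lie}, recall the Batalin--Vilkovisky structures on $\mathbb{H}_*(LS^{2k+1};\mathbb{Q})$ from~\cite{menichi:stringtopspheres}, write down the obvious algebra isomorphism onto the tensor product (after splitting $\mathbb{Q}[\pi_1(G)]\cong\mathbb{Q}[\pi_1(G)_{\mathrm{tor}}]\otimes\mathbb{Q}[\mathbb{Z}]^{\otimes l}$), and then verify that the tensor-product operator of Example~\ref{produit tensoriel BV} transports to the formula of Theorem~\ref{homologie rationel des lacets sur les groupes de Lie}. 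The paper carries out this last step by an explicit elementwise computation of $\Theta\circ\BV\circ\Theta^{-1}$, which is exactly the sign comparison you describe.
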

Note that this theorem extends the case of $G=SU(n+1)$ the special unitary
group first proved by Tamanoi in~\cite[Corollary C]{Tamanoi:BVLalg} using a different
method.
\begin{proof}
Recall from~\cite[Theorem 10]{menichi:stringtopspheres}
that
$\mathbb{H}_*(LS^1)\cong\mathbb{Q}[\mathbb{Z}]\otimes \Lambda x^\vee
$ with
$$\BV(x^n\otimes x^\vee)=n(x^n\otimes 1),\quad\BV(x^n\otimes 1)=0$$
for all $n\in\mathbb{Z}$.
Here $x$ denotes a generator of $\pi_1(S^1)\cong \mathbb{Z}$.
Recall from~\cite[Theorem 16]{menichi:stringtopspheres} that
for $i\geq 1$,
$\mathbb{H}_*(LS^{2i+1})\cong\Lambda (s^{-1}x)\otimes\Lambda x^\vee,
$ with
$\BV( s^{-1}x^n\otimes x^\vee)=n(s^{-1}x^{n-1}\otimes 1)$ and
$\BV( s^{-1}x^n\otimes 1)=0$ for all $n\geq 0$.
Here $x$ denote a generator of lower degree $2i+1$.

Let $x_i$'s be the generators defined by Theorem~\ref{homologie rationel des lacets sur les groupes de Lie}.
Let $\Theta$ be the isomorphim of algebras from
$$\mathbb{H}_*(LS^{1};\mathbb{Q})^{\otimes\text{dim }\pi_{1}(G)\otimes\mathbb{Q}}\otimes\mathbb{Q}[\pi_1(G)_{\text{tor}}]\otimes \bigotimes_{k=1}^{+\infty}
\mathbb{H}_*(LS^{2k+1};\mathbb{Q})^{\otimes\text{dim }\pi_{2k+1}(G)\otimes\mathbb{Q}}
$$
to
$$
\mathbb{Q}[\pi_1(G)]\otimes\Lambda (s^{-1}x_j)_{l<j\leq r}\otimes
\Lambda (x_i^\vee)_{1\leq i\leq r}
$$
mapping 

$\bullet$ for $1\leq i\leq l$,
the elements $x_i^{n_i}\otimes 1$ (respectively $x_i^{n_i}\otimes x_i^\vee$) in the $i^{th}$ factor of $\mathbb{H}_*(LS^{1};\mathbb{Q})^{\otimes\text{dim }\pi_{1}(G)\otimes\mathbb{Q}}$ to $x_i^{n_i}\otimes 1\otimes 1$
(respectively $x_i^{n_i}\otimes 1\otimes x_i^\vee$),

$\bullet$ for each $y\in\pi_1(G)_{\text{tor}}$, the element
$y$ to $y\otimes 1\otimes 1$ and

$\bullet$for $l< i\leq r$,
 the elements $s^{-1}x_i^{n_i}\otimes 1$
(respectively $s^{-1}x_i^{n_i}\otimes x_i^\vee$) in the $(i-l)^{th}$ factor of
$\bigotimes_{k=1}^{+\infty}
\mathbb{H}_*(LS^{2k+1};\mathbb{Q})^{\otimes\text{dim }\pi_{2k+1}(G)\otimes\mathbb{Q}}$ to $1\otimes s^{-1}x_i^{n_i}\otimes 1$
(respectively $1\otimes s^{-1}x_i^{n_i}\otimes x_i^\vee$).
 
Explicitly $\Theta$ is the linear isomorphism 
mapping the element
\begin{multline*}
(x_1^{n_1}\otimes 1)\otimes \dots \otimes (x_{j_1}^{n_{j_1}}\otimes x_{j_1}^\vee)
\otimes\dots\otimes (x_l^{n_l}\otimes 1)\otimes y\\
\otimes
(s^{-1}x_{l+1}^{n_{l+1}}\otimes 1)\otimes \dots \otimes (s^{-1}x_{j_p}^{n_{j_p}}\otimes x_{j_p}^\vee)
\otimes\dots\otimes (s^{-1}x_r^{n_r}\otimes 1)
\end{multline*}
to
$
x_1^{n_1}\dots x_l^{n_l}y\otimes s^{-1}x_{l+1}^{n_{l+1}}\dots s^{-1}x_r^{n_r}
\otimes x_{j_1}^\vee\dots x_{j_p}^\vee
$.

In the tensor product of Batalin-Vilkovisky algebras
$$\mathbb{H}_*(LS^{1};\mathbb{Q})^{\otimes\text{dim }\pi_{1}(G)\otimes\mathbb{Q}}\otimes\mathbb{Q}[\pi_1(G)_{\text{tor}}]\otimes \bigotimes_{i=1}^{+\infty}
\mathbb{H}_*(LS^{2i+1};\mathbb{Q})^{\otimes\text{dim }\pi_{2i+1}(G)\otimes\mathbb{Q}},
$$
the operator $\BV$ is given by
\begin{multline*}
\BV(
x_1^{n_1}\otimes 1\otimes \dots \otimes x_{j_1}^{n_{j_1}}\otimes x_{j_1}^\vee
\otimes\dots\otimes x_l^{n_l}\otimes 1\otimes y\\
\otimes
s^{-1}x_{l+1}^{n_{l+1}}\otimes 1\otimes \dots \otimes s^{-1}x_{j_p}^{n_{j_p}}\otimes x_{j_p}^\vee
\otimes\dots\otimes s^{-1}x_r^{n_r}\otimes 1
)
\end{multline*}
\begin{multline*}
=\sum_{i=1, j_i\leq l}^p (-1)^{i-1}
x_1^{n_1}\otimes 1\otimes \dots \otimes x_{j_1}^{n_{j_1}}\otimes x_{j_1}^\vee\otimes
\dots \otimes \BV(x_{j_i}^{n_{j_i}}\otimes x_{j_i}^\vee)
\otimes\dots\otimes x_l^{n_l}\otimes 1\otimes y\\\otimes
s^{-1}x_{l+1}^{n_{l+1}}\otimes 1\otimes \dots \otimes s^{-1}x_{j_p}^{n_{j_p}}\otimes x_{j_p}^\vee
\otimes\dots\otimes s^{-1}x_r^{n_r}\otimes 1\\
+\sum_{i=1,j_i>l}^p (-1)^{i-1}
x_1^{n_1}\otimes 1\otimes \dots \otimes x_{j_1}^{n_{j_1}}\otimes x_{j_1}^\vee
\otimes\dots\otimes x_l^{n_l}\otimes 1\otimes y\\\otimes
s^{-1}x_{l+1}^{n_{l+1}}\otimes 1
\otimes \dots \otimes \BV(s^{-1}x_{j_i}^{n_{j_i}}\otimes x_{j_i}^\vee)
\otimes \dots \otimes s^{-1}x_{j_p}^{n_{j_p}}\otimes x_{j_p}^\vee
\otimes\dots\otimes s^{-1}x_r^{n_r}\otimes 1
\end{multline*}
\begin{multline*}
=\sum_{i=1, j_i\leq l}^p (-1)^{i-1}
x_1^{n_1}\otimes 1\otimes \dots \otimes x_{j_1}^{n_{j_1}}\otimes x_{j_1}^\vee\otimes
\dots \otimes (n_{j_i} x_{j_i}^{n_{j_i}}\otimes 1)
\otimes\dots\otimes x_l^{n_l}\otimes 1\otimes y\\\otimes
s^{-1}x_{l+1}^{n_{l+1}}\otimes 1\otimes \dots \otimes s^{-1}x_{j_p}^{n_{j_p}}\otimes x_{j_p}^\vee
\otimes\dots\otimes s^{-1}x_r^{n_r}\otimes 1\\
+\sum_{i=1,j_i>l}^p (-1)^{i-1}
x_1^{n_1}\otimes 1\otimes \dots \otimes x_{j_1}^{n_{j_1}}\otimes x_{j_1}^\vee
\otimes\dots\otimes x_l^{n_l}\otimes 1\otimes y\\\otimes
s^{-1}x_{l+1}^{n_{l+1}}\otimes 1
\otimes \dots \otimes (n_{j_i} s^{-1}x_{j_i}^{n_{j_i}-1}\otimes 1)
\otimes \dots \otimes s^{-1}x_{j_p}^{n_{j_p}}\otimes x_{j_p}^\vee
\otimes\dots\otimes s^{-1}x_r^{n_r}\otimes 1
\end{multline*}
Therefore
$$
\Theta\circ\BV\circ\Theta^{-1}
(x_1^{n_1}\dots x_l^{n_l}y\otimes s^{-1}x_{l+1}^{n_{l+1}}\dots s^{-1}x_r^{n_r}
\otimes x_{j_1}^\vee\dots x_{j_p}^\vee
)
$$
is given by Theorem~\ref{homologie rationel des lacets sur les groupes de Lie}.
\end{proof}
\section{semi-direct product up to homotopy}\label{semi-direct}
This section on split fibrations is needed in Section~\ref{action du cercle sur les lacets d'un groupe}.
A split short exact sequences of groups gives a semi-direct product.
The following Proposition, which is certainly not new,
is a homotopy version of this fact.
\begin{proposition}
Let $F\buildrel{j}\over\hookrightarrow E\buildrel{p}\over\twoheadrightarrow B$
be a fibration with an homotopy section $\sigma:B\rightarrow E$, $p\circ\sigma\thickapprox id_E$,
such that $j$, $p$ and $\sigma$ are morphims of $H$-groups.
Then $j$ admits a retract $r:E\rightarrow F$ up to homotopy such
that 

$\bullet$ the composite $r\circ\sigma $ is homotopically trivial and

$\bullet$ the composite
$F\times B\buildrel{j\times\sigma}\over\rightarrow E\times E\buildrel{\mu_E}\over\rightarrow E$
and $(r,p):E\rightarrow F\times B$ are homotopy inverse.
\end{proposition}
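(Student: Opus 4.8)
The plan is to exhibit the homotopy equivalence $F\times B\simeq E$ furnished by the multiplication first, and then to read off the retract $r$ together with all three asserted properties as purely formal consequences. Write $\mu_E,\mu_F,\mu_B$, $\nu_E,\nu_F,\nu_B$ and $e_E,e_F,e_B$ for the multiplications, homotopy inverses and units of the three $H$-groups, and recall that $F=p^{-1}(e_B)$, so that $p\circ j$ is constant and $p\circ\sigma\simeq\mathrm{id}_B$. The map to study is
\[
\Psi:=\mu_E\circ(j\times\sigma)\colon F\times B\longrightarrow E,\qquad \Psi(f,b)=j(f)\cdot\sigma(b).
\]
First I would check that $\Psi$ is fibrewise over $B$: since $p$ is a morphism of $H$-groups one has $p\circ\mu_E\simeq\mu_B\circ(p\times p)$, and combined with the constancy of $p\circ j$, with $p\circ\sigma\simeq\mathrm{id}_B$ and with the unit axiom of $B$ this gives $p\circ\Psi\simeq\mathrm{pr}_B$; using the homotopy lifting property of $p$ one then replaces $\Psi$ by a homotopic map, still called $\Psi$, with $p\circ\Psi=\mathrm{pr}_B$ on the nose, so that $\Psi$ becomes a genuine map over $B$ from the trivial fibration $\mathrm{pr}_B\colon F\times B\to B$ to $p\colon E\to B$.

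The key step, and the one where I expect the real work to lie, is to prove that $\Psi$ is a homotopy equivalence; the mechanism is that $p$ has the section $\sigma$. Because $\sigma_*$ splits $p_*$, the connecting homomorphisms in the long exact homotopy sequence of $F\hookrightarrow E\twoheadrightarrow B$ vanish — just as they do for the trivial fibration $\mathrm{pr}_B$ — so $j_*$ is (split) injective and $\Psi$ induces a ladder of two split short exact sequences carrying $\mathrm{id}_{\pi_*(B)}$ on the right. The map $\bar\Psi\colon F\to F$ induced on the fibre over $e_B$ satisfies $j\circ\bar\Psi\simeq j$ (take $\sigma$ pointed and use that $j$ is an $H$-map together with the unit axiom of $E$), so injectivity of $j_*$ forces $\bar\Psi_*=\mathrm{id}$, and the five lemma then yields that $\Psi_*$ is an isomorphism. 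To upgrade this weak equivalence to a genuine homotopy equivalence one either invokes that the spaces at hand have the homotopy type of CW-complexes, or applies Dold's comparison theorem for fibrations, the required fibrewise homotopy equivalences being the restrictions to the fibres of $p$ of the translations of the $H$-group $E$ (which are self-homotopy-equivalences of $E$). The only genuinely delicate part is the bookkeeping here: pointedness of the given $H$-maps, strictifying maps up to homotopy, and, when $B$ is disconnected, arguing one path-component of $B$ at a time.

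Once $\Psi$ is known to be a homotopy equivalence everything else is formal. Choose a homotopy inverse $\rho\colon E\to F\times B$ and put $r:=\mathrm{pr}_F\circ\rho\colon E\to F$. From $p\circ\Psi\simeq\mathrm{pr}_B$ and $\Psi\circ\rho\simeq\mathrm{id}_E$ one gets $\mathrm{pr}_B\circ\rho\simeq p$, hence $\rho\simeq(r,p)$ as maps $E\to F\times B$; thus $\mu_E\circ(j\times\sigma)$ and $(r,p)$ are mutually homotopy inverse, which is the second bullet. Writing $\iota\colon F\to F\times B$, $f\mapsto(f,e_B)$, one has $\Psi\circ\iota=(f\mapsto j(f)\cdot\sigma(e_B))\simeq j$, whence $\rho\circ j\simeq\iota$ and therefore $r\circ j=\mathrm{pr}_F\circ\rho\circ j\simeq\mathrm{id}_F$, i.e.\ $r$ is a homotopy retract of $j$; and writing $\kappa\colon B\to F\times B$, $b\mapsto(e_F,b)$, one has $\Psi\circ\kappa=(b\mapsto j(e_F)\cdot\sigma(b))\simeq\sigma$, whence $\rho\circ\sigma\simeq\kappa$ and $r\circ\sigma=\mathrm{pr}_F\circ\rho\circ\sigma\simeq\mathrm{const}_{e_F}$, the first bullet. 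In the application to the free loop fibration $\Omega G\hookrightarrow LG\twoheadrightarrow G$ with $\sigma$ the constant-loop section, $\Psi$ is precisely the map $\Theta_{G,G}$, its homotopy inverse $(r,p)$ is $l\mapsto(r(l),l(0))$, and $j\circ r$ is the difference map $l\mapsto(t\mapsto l(t)l(0)^{-1})$, exactly as used in Section~\ref{action du cercle sur les lacets d'un groupe}.
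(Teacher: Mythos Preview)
Your argument is correct, but the paper proceeds differently and a bit more economically. Instead of proving that $\Psi=\mu_E\circ(j\times\sigma)$ is a weak equivalence via the five lemma on homotopy groups and then upgrading it (which costs you a CW hypothesis or an appeal to Dold), the paper applies the functor $[Z,-]$ for an arbitrary pointed space $Z$: because $j,p,\sigma$ are $H$-maps, the Nomura--Puppe sequence yields a genuine split short exact sequence of \emph{groups} $1\to[Z,F]\to[Z,E]\to[Z,B]\to 1$, so $[Z,E]$ is the semidirect product $[Z,F]\rtimes[Z,B]$. The retract is then written down explicitly: the self-map $e\mapsto e\cdot(\sigma p(e))^{-1}$ of $E$ has trivial composite with $p$, hence factors uniquely through $j$ on the level of $[Z,-]$, and Yoneda turns this natural transformation into an actual map $r:E\to F$; the semidirect-product bijection, read through Yoneda again, gives that $\mu_E\circ(j\times\sigma)$ and $(r,p)$ are mutual homotopy inverses. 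The payoff of the paper's route is twofold: it produces the concrete formula for $j\circ r$ that Section~\ref{action du cercle sur les lacets d'un groupe} actually uses, and it needs no CW or Dold input since bijectivity of $[Z,\Psi]$ for all $Z$ is already a homotopy equivalence. Your approach trades that generality for a more hands-on geometric picture, and your final paragraph correctly identifies the same formula in the free-loop case.
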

\begin{proof}
Let $Z$ be any pointed space. Applying the pointed homotopy class functor $[Z,-]$,
the Nomura-Puppe long exact sequence gives the short exact sequence of groups with the splitting
$[Z,\sigma]$,
$$
1\rightarrow [Z,F]\buildrel{[Z,j]}\over\hookrightarrow[Z,E]\buildrel{[Z,p]}\over\twoheadrightarrow[Z,B]\rightarrow 1.
$$
Therefore the group $[Z,E]$ is isomorphic to the semi-direct product of groups $[Z,F]\rtimes [Z,B]$.
More precisely:

The map $E\rightarrow E$ sending $e\mapsto e\cdot\sigma\circ p(e)^{-1}$
induces the map
$$
[Z,\mu_E\circ(id_E,inv\circ \sigma\circ p)]:[Z,E]\rightarrow [Z,E].
$$
Let $R:[Z,E]\rightarrow [Z,F]$ be the unique map such that the composite $[Z,j]\circ R$ coincides with
$[Z,\mu_E\circ(id_E,inv\circ \sigma\circ p)]$. The map $R$ is a retract of $[Z,j]$.

The bijection $[Z,F]\times [Z,B]=[Z,F\times B]\rightarrow [Z,E]$ is given by
$\mu_{[Z,E]}\circ ([Z,j]\times [Z,\sigma])=[Z,\mu_E\circ (j\times\sigma)]$.
Its inverse $[Z,E]\rightarrow [Z,F]\times [Z,B]$ is the map $(R,[Z,p])$.
By Yoneda lemma, the natural transformation $R:[Z,E]\rightarrow [Z,F]$ can be uniquely written as a $[Z,r]$
where $r$ is a pointed map from $E$ to $F$.
By Yoneda Lemma again, the composite
$F\times B\buildrel{j\times\sigma}\over\rightarrow E\times E\buildrel{\mu_E}\over\rightarrow E$
and $(r,p):E\rightarrow F\times B$ are homotopy inverse, $r\circ j\thickapprox Id_F$ and
$r\circ\sigma $ is homotopically trivial.
\end{proof}
\bibliography{Bibliographie}
\bibliographystyle{amsplain}
\end{document}